\newtheorem{lemma}{Lemma}[section]
\newtheorem{thm}[lemma]{Theorem}
\newtheorem{rem}[lemma]{Remark}
\newtheorem{prop}[lemma]{Proposition}
\newtheorem{example}[lemma]{Example}
\newtheorem{defn}[lemma]{Definition}
\newcommand\matR{{\mathbb{R}}}
\newcommand\matN{{\mathbb{N}}}
\renewcommand{\hbar}{{\overline{h}}}
\newfont{\Got}{eufm10 scaled 1200}
\newcommand\calD{{\mathcal D}}
\begin{document}

\title{Diffeological gluing of vector pseudo-bundles and pseudo-metrics on them}

\author{Ekaterina~{\textsc Pervova}}

\maketitle

\begin{abstract}
\noindent Although our main interest here is developing an appropriate analog, for diffeological vector pseudo-bundles, of a Riemannian metric, a significant portion is dedicated to continued study of the 
gluing operation for pseudo-bundles introduced in [9]. We give more details regarding the behavior of this operation with respect to gluing, also providing some details omitted from [9], and pay more attention 
to the relations with the spaces of smooth maps. We also show that a usual smooth vector bundle over a manifold that admits a finite atlas can be seen as a result of a diffeological gluing, and thus deduce 
that its usual dual bundle is the same as its diffeological dual. We then consider the notion of a pseudo-metric, the fact that it does not always exist (which seems to be related to non-local-triviality condition), 
construction of an induced pseudo-metric on a pseudo-bundle obtained by gluing, and finally, the relation between the spaces of all pseudo-metrics on the factors of a gluing, and on its result. We conclude by
commenting on the induced pseudo-metric on the pseudo-bundle dual to the given one.

\noindent MSC (2010): 53C15 (primary), 57R35 (secondary).
\end{abstract}

\section*{Introduction}

For anyone who comes into contact with diffeology \cite{iglesiasBook} (meant initially as an extension of differential geometry, although as of now, other opinions exist) it does not take long to notice that 
a great number of typical objects do not admit obvious counterparts; there is obviously a diffeological space as the base object, and there are also various types of it endowed with an extra, algebraic, 
structure, starting with a natural concept of a diffeological vector space and proceeding towards the notion of a diffeological group, diffeological algebra, and so on. But when we try to endow these objects 
with further structures, even very simple ones, we discover that it is not possible to do so.

The specific instance that we have in mind at the moment is that of a scalar product on a finite-dimensional diffeological vector space. Surprising in its simplicity, it excellently illustrates what has been said 
in the previous paragraph. Specifically, it has been known for some time (see \cite{iglesiasBook}, Ex. 70 on p. 74) that a finite-dimensional diffeological vector space admits a smooth (with respect to its 
diffeology) scalar product if and only if it is a standard space, which means that, as a vector space, it is isomorphic to $\matR^n$ for some $n$, and its diffeology consists of all usual smooth maps. But for 
a single vector space this issue is easily resolved: one looks (as was done, for instance, in \cite{pseudometric}) for a kind of minimally degenerate smooth symmetric bilinear form on it; there is always one, 
of rank equal to the dimension of the so-called diffeological dual \cite{wu} of the space in question.

The next natural step, then, is to pose the same question for a possible diffeological counterpart of the notion of a Riemannian metric. The corresponding issues become two-fold then: for one thing, there is 
not yet a standard theory of tangent spaces in diffeology (but some constructions do exist, see, for instance, \cite{CWtangent}, as well as the previous \cite{HeTangent}, and other references therein), so we 
are left with looking for a diffeological version of a diffeological metric on an abstract (diffeological version of) vector bundle.

This version appeared originally in \cite{iglFibre} (by the author's own admission, the same material is better explained in Chapter 8 of \cite{iglesiasBook}; but the former is the original source). It was later 
utilized in \cite{vincent}, under the name of a \emph{regular vector bundle}, and then in \cite{CWtangent}, where it is called a \emph{diffeological vector space over $X$}, $X$ being the notation for the base 
space. We call it a \emph{diffeological vector pseudo-bundle}, as was already done in \cite{pseudobundles}. This concept is an obvious (it is asked that all the operations be smooth in the diffeological sense) 
extension to the diffeological context of the usual notion of a vector bundle, except that there is no requirement of local triviality. This allows to treat objects that not only carry an unusual smooth structure (that 
is, a diffeology) but also ones that, from a topological point of view, have a more complicated local structure than that of a Euclidean space.

For objects such as these, there are well-defined notions of the tensor product pseudo-bundle and of the dual pseudo-bundle, see \cite{vincent}. So a diffeological metric can of course be defined as a smooth 
section of the tensor product of two duals, symmetric at each point. However, due to what has been said about single vector spaces, the value at a given point cannot, in general, be a scalar product, unless 
the fibre at this specific point is standard (generally not the case). In this paper we discuss what happens if pointwise we try to take the minimally degenerate value of the prospective section (the approach 
initiated in \cite{pseudobundles}).

\paragraph{Acknowledgements} Various parts of this work emerged inside of several others --- and then moved in here, when it became more sensible for them to do so. I must admit that I maybe would not
have kept sensible enough in the meantime if it were not for my colleague and role model Prof. Riccardo Zucchi.

\section{Definitions needed}

In order to make the paper self-contained, we collect here all the definitions that are used in the rest of the paper.

\subsection{Diffeological spaces}

The basic notion for diffeology is that of a \emph{diffeological space}, that is, a set endowed with a \emph{diffeology}, a collection of maps called \emph{plots} that play the role of a (version of a) smooth 
structure.

\paragraph{Diffeological spaces and smooth maps between them} A diffeological space is just a set $X$ endowed with a \emph{diffeology}, which is a collection of maps from usual domains to $X$; three 
natural conditions must be satisfied.

\begin{defn} \emph{(\cite{So2})} A \textbf{diffeological space} is a pair $(X,\calD_X)$ where $X$ is a set and $\calD_X$ is a specified collection, also called the \textbf{diffeology} of $X$, of maps $U\to X$ 
(called \textbf{plots}) for each open set $U$ in $\matR^n$ and for each $n\in\matN$, such that for all open subsets $U\subseteq\matR^n$ and $V\subseteq\matR^m$ the following three conditions are 
satisfied:
\begin{enumerate}
\item (The covering condition) Every constant map $U\to X$ is a plot;
\item (The smooth compatibility condition) If $U\to X$ is a plot and $V\to U$ is a smooth map (in the usual sense) then the composition $V\to U\to X$ is also a plot;
\item (The sheaf condition) If $U=\cup_iU_i$ is an open cover and $U\to X$ is a set map such that each restriction $U_i\to X$ is a plot then the entire map $U\to X$ is a plot as well.
\end{enumerate}
\end{defn}

A standard example of a diffeological space is a standard manifold whose diffeology consists of all usual smooth maps into it. However, some standard constructions of diffeology (appearing shortly below)
allow us to construct plenty of unusual diffeological spaces.

Let now $X$ and $Y$ be two diffeological spaces (obviously, this includes $Y=X$), and let $f:X\to Y$ be a set map. It is a \textbf{smooth map} (as one between diffeological spaces) if for every plot $p$ of $X$ 
the composition $f\circ p$ is a plot of $Y$.

\paragraph{The underlying topology} Let $X$ be a diffeological space; it carries a natural topology underlying its diffeological structure and called \textbf{D-topology}, introduced in \cite{iglFibre}. This 
topology is defined by the following condition: a set $Y\subset X$ is open for D-topology (and is called \textbf{D-open}) if and only if for every plot $p:U\to X$ of $X$ its pre-image $p^{-1}(Y)$ is open in
$U\subseteq\matR^m$ in the usual sense. In fact, it was shown in \cite{CSW_Dtopology} (Theorem 3.7) that it is sufficient (and, of course, necessary) that this condition hold for smooth curves only, that is, 
for plots defined on $\matR$ (or its subdomain).

\paragraph{The lattice of diffeologies} As can be expected, the set of all diffeologies on a fixed set $X$ is partially ordered by inclusion; if $\calD$ and $\calD'$ are two diffeologies on $X$ then we say that 
$\calD$ is \textbf{finer} than $\calD'$ if $\calD\subset\calD'$, and it is said to be \textbf{coarser} if the inclusion is \emph{vice versa}. It is also a fact (see \cite{iglesiasBook}, Chapter 1.25) that with respect to 
this partial ordering the lattice is closed; and this fact is frequently used when defining various types of diffeologies (as, indeed, we do almost immediately below), because the lattice property frequently 
extends to sub-families of diffeologies possessing such-and-such specified property.

\paragraph{Discrete diffeology and coarse diffeology} The smallest/finest of all possible diffeologies on a given set $X$ is usually called the \textbf{discrete diffeology}; it consists of all locally constant maps 
with values in $X$ and the D-topology underlying it is the usual discrete topology. The largest of all possible diffeologies is called the \textbf{coarse diffeology}; it consists of all possible maps into $X$.

\paragraph{Generated diffeology} A frequent way to define a diffeology on a given set $X$ is to consider a(n arbitrary) collection $\mathcal{A}=\{p_i:U\to X\}$ of maps with values in $X$, each of which is 
defined on a domain of some $\matR^n$; to each such collection there corresponds a diffeology on $X$ called the diffeology \textbf{generated by $\mathcal{A}$} and defined as the smallest diffeology 
containing all maps $p_i\in\mathcal{A}$ as plots. The plots of the generated diffeology can locally be described as follows: they either are constant or filter through a map in $\mathcal{A}$. Some frequent 
instances of generated diffeologies include those generated by just one plot (this is the case of many of our examples) or those defined on a specific class of domains (such as the so-called wire diffeology, 
generated by maps defined on one-dimensional domains, that we will encounter below).

\paragraph{Pushforwards and pullbacks} Let $X$ be a diffeological space, and let $Y$ be any set (with or without any extra structure). Suppose first that we have a map $f:X\to Y$; then $Y$ can be endowed
with a specific diffeology, called the \textbf{pushforward diffeology} (from $X$ via $f$). It is defined as the finest diffeology such that $f$ is smooth; its plots are precisely those maps that locally are 
pre-compositions of $f$ with plots of $X$. Suppose now that, \emph{vice versa}, we have a map $f:Y\to X$; in this case, $Y$ can be endowed with the \textbf{pullback diffeology}, which is the coarsest diffeology 
such that $f$ is smooth. Its plots are precisely those maps whose compositions with $f$ are plots of $X$.

\paragraph{Functional diffeology} If now $X$ and $Y$ are both diffeological spaces, we can consider the set $C^{\infty}(X,Y)$ of all smooth maps $X\to Y$. It carries a canonical diffeology called \textbf{the 
functional diffeology}; this is the coarsest diffeology for which the \emph{evaluation map}, the map $\mbox{\textsc{ev}}:C^{\infty}(X,Y)\times X\to Y$ with $\mbox{\textsc{ev}}(f,x)=f(x)$, is smooth.

\paragraph{Subset diffeology and quotient diffeology} Every subset $Y\subset X$ of a diffeological space $X$ carries a natural diffeology called \textbf{subset diffeology}; it is the coarsest diffeology for which 
the inclusion map $Y\hookrightarrow X$ is smooth. In other words, it is the set of all plots of $X$ whose images are contained in $Y$. Likewise, for every equivalence relation $\sim$ the quotient space 
$X/\sim$ carries the natural diffeology called (unsurprisingly) the \textbf{quotient  diffeology}; this is the pushforward of the diffeology of $X$ by the natural projection. Locally, it consists of the compositions 
of plots of $X$ with the quotient projection.

\paragraph{The disjoint union diffeology and the direct product diffeology} Given a collection $X_1,\ldots,X_n$ of diffeological spaces (it does not have to be finite, but for us this is sufficient), there are standard 
diffeologies on the disjoint union $\coprod_{i=1}^nX_i$ and on the direct product $\prod_{i=1}^n$, called, respectively, \textbf{the sum diffeology} and \textbf{the product diffeology}. The sum diffeology is the 
finest diffeology such that all the natural inclusions $X_i\hookrightarrow\coprod_{i=1}^nX_i$ for $i=1,\ldots,n$ are smooth; locally any plot of it is a plot of one of the components of the disjoint union. The 
product diffeology is the coarsest diffeology such that for each $i=1,\ldots,n$ the natural projection $\prod_{i=1}^nX_i\to X$ is smooth; every plot of it locally is an $n$-tuple of plots of $X_i$, that is, has form 
$(p_1,\ldots,p_n)$ with $p_i$ a plot of $X_i$.

\subsection{Diffeological vector spaces}

Here we turn to the concept of a diffeological vector space (mostly, we speak about finite-dimensional ones, although various definitions apply elsewhere).

\subsubsection{Main definitions}

All the main concepts regarding vector spaces admit their diffeological counterparts, usually by a trivial extension, although sometimes with non-trivial implications.

\paragraph{A diffeological vector space} A \textbf{diffeological vector space} is a (real) vector space $V$ endowed with a \textbf{vector space diffeology}, that is, any diffeology for which the following two maps 
are smooth: the addition map $V\times V\to V$, where obviously $(v_1,v_2)\mapsto v_1+v_2$ and $V\times V$ carries the product diffeology, and the scalar multiplication map $\matR\times V\to V$, where 
$\matR$ carries the standard diffeology and, once again, $\matR\times V$ carries the product diffeology. All our spaces are finite-dimensional.

\paragraph{Fine diffeology} The set of all vector space diffeologies on a given vector space $V$ is quite large (although there are quite a few diffeologies that are not vector diffeologies\footnote{Two examples 
in case $V=\matR^n$ are the discrete diffeology, for which the scalar multiplication is not smooth, and the already-mentioned wire diffeology, where the addition is not smooth.}). The finest of them is called
the \textbf{fine diffeology}, and a vector space endowed with its fine diffeology is called a \textbf{fine vector space}.

\paragraph{Subspaces and quotient spaces} Any vector subspace $W\leqslant V$ of a diffeological vector space $V$ is itself a diffeological vector space for the subset diffeology; the latter is therefore the
canonical diffeology on $W$, and unless specified otherwise, any subspace is automatically considered with its subset diffeology. Likewise, the usual quotient space $V/W$ is a diffeological vector space for 
the quotient diffeology; it is canonically endowed with the latter.

\paragraph{Linear maps and smooth linear maps} Surprisingly at least for a finite-dimensional case (this is my opinion), a diffeological vector space in general admits linear maps that are not smooth. More
precisely, for every linear map to be smooth a finite-dimensional space must necessarily be standard, that is, diffeomorphic to the standard $\matR^n$ for the appropriate $n$. As an illustration, we add a 
simple example: if $V$ is identified with $\matR^n$ as a vector space and $V$ carries the vector space diffeology generated by the plot $\matR\ni x\to |x|e_n$ (in place of $|x|$ we could take
any not-everywhere-smooth function) then the corresponding dual (in the usual sense) function $e^n:V\to\matR$ is not smooth (for the chosen diffeology of $V$ and the standard diffeology of $\matR$).

\paragraph{Diffeological dual} The above example also shows that the so-called diffeological dual (\cite{vincent}, \cite{wu}) of a diffeological vector space (of finite dimension) has in general smaller 
dimension than the space itself. The \textbf{diffeological dual} $V^*$ of $V$ is the space of all smooth $\matR$-valued linear maps on $V$; it is not hard to find examples of spaces with trivial diffeological 
dual and not-too-large a diffeology. Indeed, it suffices to extend the example that appears in the preceding paragraph, endowing $\matR^n$ with the vector space diffeology generated by the $n$ plots 
$\matR\ni x\to |x|e_i$ for $i=1,\ldots,n$. No element $e^i$ of the canonical dual basis of $\matR^n$ is smooth for this diffeology, so the diffeological dual is trivial in this case, although the diffeology in question 
is a rather specific one.

\paragraph{Euclidean structure and pseudo-metrics} A \textbf{scalar product} on a diffeological vector space $V$ is a usual scalar product on the underlying vector space which is also smooth with respect 
to the diffeology of $V$. However (see \cite{iglesiasBook}), in the finite-dimensional case this concept does not make much sense, meaning that unless $V$ is a standard space, it does not carry any scalar 
product; in other words, if $V$ is finite-dimensional and is not diffeomorphic to some standard $\matR^n$ then every smooth symmetric bilinear form on $V$ is degenerate. The degree of degeneracy, 
unsurprisingly, is measured by the difference between the dimension of $V$ and that of its diffeological dual; that is to say, the maximal rank that a smooth symmetric bilinear form on $V$ might have is 
$\dim(V^*)$. It is easy to see that this rank is always achieved; any smooth symmetric bilinear form on $V$ of rank $\dim(V^*)$ is called a \textbf{pseudo-metric} on $V$.

\subsubsection{Operations with diffeological vector spaces and their properties}

We now recall what becomes of the usual operations on vector spaces in the context of diffeological vector spaces.

\paragraph{Direct sum} Let $V$ and $W$ be two diffeological vector spaces. Their usual direct sum $V\oplus W$ (which is in bijective correspondence with the direct product $V\times W$) is endowed with
the \emph{product} diffeology, which however takes the name of the \textbf{direct sum diffeology}. Note that interesting phenomena might happen when we consider a single (finite-dimensional)
diffeological vector space $V$ and its decomposition (in the sense of just vector spaces) into a direct sum of two of its subspaces, $V=V_1\oplus V_2$. Note that if $V_1$ and $V_2$ are considered with
the ir subset diffeologies then the direct sum diffeology on $V_1\oplus V_2$ might be strictly finer than the initial diffeology of $V$ (see \cite{pseudometric} for a specific example of this).

\paragraph{Tensor product} Let $V_1,\ldots,V_n$ be diffeological vector spaces. Their usual tensor product as vector spaces carries a natural \textbf{tensor product diffeology} (see \cite{vincent} and
\cite{wu}), which is defined as the pushforward, by the universal map $V_1\times\ldots\times V_n\to V_1\otimes\ldots\otimes V_n$, of the vector space diffeology on the free product of $V_1,\ldots,V_n$ 
that is generated by the product diffeology on $V_1\times\ldots\times V_n$.

\paragraph{Multilinear maps as elements of a tensor product} The diffeological tensor product possesses the appropriate counterpart of the usual universal property (see \cite{vincent}, Theorem 2.3.5),
which means the following: if $V_1,\ldots,V_n,W$ are diffeological vector spaces then the space of all smooth linear maps $V_1\otimes\ldots\otimes V_n\to W$, considered with the functional diffeology, 
is diffeomorphic to the space of all smooth multilinear maps $V_1\times\ldots\times V_n\to W$ (if it, too, is endowed with the functional diffeology). In particular, the space of all smooth bilinear $\matR$-valued 
maps on a given diffeological vector space $V$ is diffeomorphic to $(V\otimes V)^*$; we also note that the latter is diffeomorphic, as is usually the case, to $V^*\otimes V^*$.

\subsection{Diffeological vector pseudo-bundles}

We now turn to the object which is central to our discussion. Indeed, although a notion of metric (or its best-possible substitute, as in our case) would usually be defined on some kind of a tangent bundle, 
there is not yet a standard theory of such for diffeological spaces, although there do exist various \emph{ad hoc} treatments and proposals; in particular, the notion of the \emph{internal tangent bundle} (see 
\cite{CWtangent}) is now emerging prominently. Our solution to this matter is to consider a diffeological notion of a metric (that we call a \emph{pseudo-metric}), or whatever can reasonably be considered such, 
on abstract diffeological vector pseudo-bundles, on the assumption that whatever a proper notion of a tangent bundle for a diffeological space would reveal itself to be, it should be an instance of such a 
pseudo-bundle.

\subsubsection{Main definitions for pseudo-bundles}

We now recall, as briefly as possible, the main definitions and some facts regarding pseudo-bundles, concentrating on the aspects that we will need in what follows.

\paragraph{What is a diffeological vector pseudo-bundle} Let $\pi:V\to X$ be a smooth surjective map between two diffeological spaces $V$ and $X$. It is called a \textbf{diffeological vector pseudo-bundle}
if for every $x\in X$ the pre-image $\pi^{-1}(x)$ is endowed with a vector space structure such that the induced operations $V\times_X V\to V$ of fibrewise addition and $\matR\times V\to V$ of scalar
multiplication, as well as the zero section, are smooth, for the subset diffeology on $V\times_X V\subset V\times V$ and the product diffeologies on $V\times V$ and $\matR\times V$ (in the latter case
$\matR$ carries the standard diffeology). A usual vector bundle of course fits this definition, but there are many examples (some of which arise in independent contexts, see below) that are not vector
bundles in the usual sense. This has in part to do with the fact that $X$ and $V$, as diffeological spaces, do not have to be smooth, or even topological, manifolds, but more importantly, the map $\pi$
does not have to be locally trivial, as illustrated below.

\paragraph{The absence of local trivializations} As has just been mentioned, diffeological vector pseudo-bundles frequently lack local trivializations; there are two main ways for this to happen, as we
now explain.

The first way is that the pseudo-bundle in question could be locally trivial, or even trivial, from the point of view of the underlying topology, but not locally trivial from the diffeological point of view. An instance 
of this is easy to construct by taking the usual projection $\matR^2\to\matR$ of $\matR^2$ onto its first coordinate, and endowing the target $\matR$ with its standard diffeology and the source $\matR^2$ with 
the smallest diffeology generated by the map $\matR^2\ni(u,v)\mapsto(u,u|v|)$ with respect to which the projection becomes a pseudo-bundle. In this case, the subset diffeology on each fibre, except the one 
over the origin of the target $\matR$, is the vector space diffeology on $\matR$ generated by the map $v\mapsto |v|$ (in particular, it is non-standard). On the other hand, the fibre over the origin carries 
the standard diffeology, so in the neighbourhood of $0\in\matR$ the pseudo-bundle is non-trivial from the diffeological point of view, although it is trivial topologically.

The other thing that might happen is that a diffeological pseudo-bundle may have fibres of different dimension; in the neighbourhood of a point where the dimension changes the pseudo-bundle is of course 
non-trivial (this will be better explained via the concept of diffeological gluing, see below).

\paragraph{Generating a diffeology on the total space of a pseudo-bundle} Especially when dealing with examples, we will frequently encounter the following situation. Let $X$ be a diffeological space, 
and let $\pi:V\to X$ be a surjective map onto $X$ such that the pre-image $\pi^{-1}(x)$ of every point $x\in X$ carries a (finite-dimensional) vector space structure. We note first that there is obviously a 
diffeology on $V$ with respect to which $\pi$ is smooth: it suffices to take the pullback diffeology. This will be a pseudo-bundle diffeology (that is, it will make the operations smooth). Indeed, by Proposition 
4.16 of \cite{CWtangent}, any given diffeology on $V$ with respect to which $\pi$ is smooth, can be expanded to a pseudo-bundle diffeology, and since the pullback diffeology is the coarsest one for which 
$\pi$ is smooth, it will coincide with such expansion, and so be a pseudo-bundle diffeology itself.

Now, by a similar reasoning we can consider a \textbf{pseudo-bundle diffeology on $V$ generated by a given set $\mathcal{A}$ of plots}, provided that the composition of each plot in the set with $\pi$ is
a plot of $X$. If this requirement is satisfied then it suffices to take the diffeology of $V$ generated (in the sense of just generated diffeology) by $\mathcal{A}$. Since the map $\pi$ is smooth with respect to 
this diffeology, we can apply again Proposition 4.16 of \cite{CWtangent} to obtain a pseudo-bundle diffeology on $V$; this is going to be the smallest pseudo-bundle diffeology containing all maps in 
$\mathcal{A}$ as plots.

\paragraph{Notation for topologically trivial pseudo-bundles} Before proceeding, we give brief explanation of the \emph{ad hoc} notation that we will use to describe some examples to be seen below. The
notation has no pretence of generality; it applies to the pseudo-bundles whose underlying topological structure is that of the projection $\matR^n\to\matR^k$ to the first $k$ coordinates of some $\matR^n$ 
(with, obviously, $n>k$). However, already this very simple fibration yields interesting examples from the diffeological point of view, by choosing different pseudo-bundle diffeologies on the total space 
$\matR^n$ (whereas the base space, $\matR^k$, will usually be considered with its standard diffeology).

The vector space structure on the fibres of the just-mentioned projection comes from the second factor in the obvious decomposition as a direct product $\matR^n=\matR^k\times\matR^{n-k}$. Thus, in
particular, the elements of the dual space of the fibre at a given point $x\in\matR^k$ can be written in the form $(x,a_{k+1}e^{k+1}+\ldots+a_ne^n)$ for some real numbers $a_{k+1},\ldots,a_n$. As can 
be expected from what we have said already about diffeological duals, in general not all choices of these coefficients would give a smooth linear map on a given fibre; the point at this moment is however 
that all of them can be written in such a form.

Similarly, on appropriate occasions we can use this notation to describe the main object of our interest, a smooth bilinear form (hopefully one that comes as close as possible to a kind of a metric; we will see 
this later) on a pseudo-bundle as above. Such a form is, as usual, a smooth section of the tensor square of its dual, so we can write it as
$$x\mapsto(x,\sum_{i,j=k+1}^n a_{ij}e^i\otimes e^j).$$ As in the case of the dual, in general (that is, unless the diffeology is the standard one) there are restrictions on the coefficients $a_{ij}$ to ensure that 
the section is smooth (these are seen on the case-by-case basis). Besides, we will always deal with symmetric forms, so of course we'll have $a_{ij}=a_{ji}$.

\subsubsection{Vector bundle operations on pseudo-bundles}

In what follows we will need the following operations on pseudo-bundles: taking a sub-bundle, taking a quotient bundle, direct sum of pseudo-bundles, tensor product of pseudo-bundles, and taking the 
dual bundle. Since we cannot use local trivializations, as is standard, to define these operations, we briefly recall their definitions in the diffeological case.

\paragraph{Sub-bundles} A \textbf{sub-bundle} of diffeological vector pseudo-bundle $\pi:V\to X$ is any subset $W\subset V$ such that for every $x\in X$ the intersection $W\cap\pi^{-1}(x)$ is a vector 
subspace of $\pi^{-1}(x)$; the subset $W$ is equipped with the subset diffeology of $V$ and the restriction $\pi|_W$ of the pseudo-bundle map $\pi$. The map $\pi|_W:W\to X$ thus obtained is a
diffeological vector pseudo-bundle on its own; in particular, it is a surjective map, since the assumption that $W\cap\pi^{-1}(x)$ be always a vector subspace means in particular that it is always non-empty.

\paragraph{Quotient pseudo-bundles} Suppose now that we have a pseudo-bundle $\pi:V\to X$ and a sub-bundle $W\subset V$. This sub-bundle defines the obvious (it is defined separately on each
single fibre by the usual quotienting over the vector subspace $W\cap\pi^{-1}(x)$) equivalence relation on $V$; the quotient space $Z=V/W$, endowed with the quotient diffeology and the induced
projection onto $X$ is again a diffeological vector pseudo-bundle, which is called the \textbf{quotient pseudo-bundle}.

\paragraph{The direct sum} Let $\pi_1:V_1\to X$ and $\pi_2:V_2\to X$ be two diffeological vector pseudo-bundles with the same base space. Their \textbf{direct sum} is defined as 
$V_1\oplus V_2:=V_1\times_X V_2=\{(v_1,v_2)\,|\,\pi_1(v_1)=\pi_2(v_2)\}\subset V_1\times V_2$, endowed with the factorwise operations and the subset diffeology relative to the product diffeology on 
$V_1\times V_2$. The projection onto $X$ is obvious and is denoted by $\pi_1\oplus\pi_2$; and it is rather easy to see (the details are available in \cite{vincent}) that $\pi_1\oplus\pi_2:V_1\oplus V_2\to X$ 
is again a diffeological vector pseudo-bundle, with the fibre $(\pi_1\oplus\pi_2)^{-1}(x)=\pi_1^{-1}(x)\oplus\pi_2^{-1}(x)$ for any $x\in X$.

\paragraph{The tensor product} Once again, we choose a(n arbitrary) diffeological space $X$ and consider two (finite-dimensional) diffeological vector pseudo-bundles over it, $\pi_1:V_1\to X$ and
$\pi_2:V_2\to X$. The \textbf{tensor product} $\pi_1\otimes\pi_2:V_1\otimes V_2\to X$ of these two pseudo-bundles is by definition the quotient pseudo-bundle of the direct product pseudo-bundle 
$V_1\times_X V_2\to X$ over its sub-bundle $W$ given by the condition that its fibre $W_x$ at any $x\in X$ is the kernel of the usual universal map  
$\pi_1^{-1}(x)\times\pi_2^{-1}(x)\to\pi_1^{-1}(x)\otimes\pi_2^{-1}(x)$; as we have already mentioned, any collection of vector subspaces, taken one for each fibre, defines a sub-bundle, so $W$, and the
quotient pseudo-bundle over it, are well-defined. It is also clear from this definition that $(\pi_1\otimes\pi_2)^{-1}(x)=\pi_1^{-1}(x)\otimes\pi_2^{-1}(x)$ for every $x\in X$.

\paragraph{The dual bundle} Let $\pi:V\to X$ be a diffeological vector pseudo-bundle; to define its \textbf{dual pseudo-bundle} (for brevity, more often we will just say \emph{dual bundle}) we first set $V^*$ 
to be the formal union of the diffeological duals of all fibres of $V$, \emph{i.e.}, $V^*=\cup_{x\in X}(\pi^{-1}(x))^*$. This union is naturally equipped with the obvious projection onto $X$; we wish to endow it 
with a diffeology (which, recall, would also imply a topological structure), such that, first, the projection onto $X$ be smooth, and, second, the subset diffeology on any its component $(\pi^{-1}(x))^*$ coincide 
with its existing (functional) diffeology as the dual of $\pi^{-1}(x)$. The existence of this diffeology, and an explicit characterization of its plots were given in \cite{vincent}, Definition 5.3.1 and Proposition 5.3.2. 
This description of plots is as follows.

Let $\pi:V\to X$ be a diffeological vector pseudo-bundle, and let $\pi^*:V^*\to X$ be its dual pseudo-bundle. A map $p:\matR^k\supset U\to V^*$ is a plot of $V^*$ if and only if for every plot
$q:\matR^{k'}\supset U'\to V$ of $V$ the map $(u,u')\mapsto p(u)(q(u'))$ defined on the set $Y=\{(u,u')\,|\,\pi^*(p(u))=\pi(q(u))\}\subset U\times U'$ (and taking values in $\matR$) is smooth for the subset 
diffeology on $Y\subset\matR^{k+k'}$ and the standard diffeology of $\matR$. It is easy to notice that this is a fibrewise extension of the characterization of the functional diffeology on individual fibres.

\subsubsection{Assembling more complicated pseudo-bundles}

As we have already pointed out, diffeological pseudo-bundles lack local trivializations, and rightly so, since they may not be locally  trivial. A partial attempt to compensate for this absence could be
to consider a kind of assembling, which later on we will call \emph{gluing}, of more complicated pseudo-bundles from simpler ones, such as for instance those that from the topological point of view
are just trivial fiberings of some $\matR^n$ over some $\matR^k$. While there is no reason to expect that this would describe any finite-dimensional diffeological vector pseudo-bundle (rather, given
the breadth of diffeology, we might expect the obvious), it could still be a way to single out, and render more manageable, some reasonable, and reasonably wide, classes of them.

\paragraph{Gluing of two diffeological spaces} The basic operation that defines this assembling is the \textbf{gluing of two diffeological spaces}. Let $X_1$ and $X_2$ be diffeological spaces, and let 
$f:X_1\supset Y\to X_2$ be a smooth (for the subset diffeology on $Y$) map; more often than not we will assume that it is a diffeomorphism with its image, but this is not necessary at the moment. We use 
the standard (topological) notion defining the result of \emph{gluing $X_1$ to $X_2$ along $f$}, as a set, by setting 
$$X_1\cup_f X_2=\left(X_1\sqcup X_2\right)/\sim,$$ where $\sim$ is the equivalence relation on the disjoint union given by $Y\ni y\sim f(y)$ (with all other points being equivalent to themselves only). 
The set $X_1\cup_f X_2$ is then endowed with the \textbf{gluing diffeology}, which is the obvious one: the quotient diffeology of the disjoint union diffeology on $X_1\sqcup X_2$.

\paragraph{Gluing of two pseudo-bundles} Let $\pi_1:V_1\to X_1$ and $\pi_2:V_2\to X_2$ be two diffeological vector pseudo-bundles. To perform a gluing on them, we just need two maps, one for the bases
and the other for the total spaces, with a number of conditions that ensure that the result of the gluing is again a diffeological vector pseudo-bundle (with the pseudo-bundle map arising naturally from the
two given ones, $\pi_1$ and $\pi_2$). These conditions are as follows. The gluing of the base spaces is carried out along a map $f:X_1\supset Y\to f(Y)\subset X_2$, which is required to be smooth
(for the subset diffeologies of its domain and co-domain); in most cases we will actually assume that it is a diffeomorphism with its image. The gluing of the total spaces is done along a smooth and
fibrewise linear lift $\tilde{f}:\pi_1^{-1}(Y)\to\pi_2^{-1}(f(Y))$.

Given the data listed in the above paragraph, we obtain two diffeological spaces, $V_1\cup_{\tilde{f}}V_2$ and $X_1\cup_f X_2$; the pseudo-bundle maps $\pi_1$ and $\pi_2$ induce in an obvious
manner a smooth surjection, denoted by $\pi_1\cup_{(\tilde{f},f)}\pi_2$, of the former onto the latter. Furthermore, each fibre of this surjection inherits a vector space structure from either $V_1$ or $V_2$, as 
relevant. We finally observe that the resulting map
$$\pi_1\cup_{(\tilde{f},f)}\pi_2:V_1\cup_{\tilde{f}}V_2\to X_1\cup_f X_2$$ is again a diffeological vector pseudo-bundle (see \cite{pseudobundles} for the formal proof). This pseudo-bundle is precisely what 
we mean by the result of \textbf{gluing the pseudo-bundle $\pi_1:V_1\to X_1$ to $\pi_2:V_2\to X_2$ along $(\tilde{f},f)$}.

\section{Operations and gluing}

The first results regarding the behavior of the vector bundle operations with respect to the gluing were given in \cite{pseudobundles}. Here we give details that were omitted from there, along with some new 
examples and observations.

\subsection{Sub-bundles and quotient pseudo-bundles}

Let $\pi_1:V_1\to X_1$ and $\pi_2:V_2\to X_2$ be two diffeological vector pseudo-bundles, and let $W_1\subset V_1$ and $W_2\subset V_2$ be their sub-bundles. Let $f:X_1\supset Y\to f(Y)\subset X_2$ be 
a smooth map, and let $\tilde{f}:\pi_1^{-1}(Y)\to\pi_2^{-1}(f(Y))$ be its smooth fibrewise linear lift.

\paragraph{Sub-bundles} What follows is an extended version of a lemma that appears in \cite{pseudobundles}.

\begin{lemma}
The image $\tilde{W}$ of $W_1\sqcup W_2$ in $V_1\cup_{\tilde{f}}V_2$ is a sub-bundle of the latter if and only if for every $y\in Y$ one of the following two cases occurs: either
$\tilde{f}(\pi_1^{-1}(y)\cap W_1)\leqslant\pi_2^{-1}(y)\cap W_2$ or $\tilde{f}(\pi_1^{-1}(y)\cap W_1)\geqslant\pi_2^{-1}(y)\cap W_2$ (the reverse inclusion).
\end{lemma}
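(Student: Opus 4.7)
The plan is to reduce the claim to a purely fibrewise linear-algebra question. Since a sub-bundle is characterised by its fibre intersections being vector subspaces (the subset diffeology and the restricted projection are automatic), I would first check the condition separately over each type of point in the glued base $X_1\cup_f X_2$. For $x\in X_1\setminus Y$ (respectively $x\in X_2\setminus f(Y)$) the gluing does not touch the fibre, which remains $\pi_1^{-1}(x)$ (respectively $\pi_2^{-1}(x)$), and $\tilde{W}$ meets it in $W_1\cap\pi_1^{-1}(x)$ (respectively $W_2\cap\pi_2^{-1}(x)$) — already a subspace by assumption on $W_1, W_2$. So only the fibres over the glued points $[y]$, with $y\in Y$, actually need work.

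The next step is to identify each such glued fibre. The equivalence relation on the total space sends every $v\in\pi_1^{-1}(y)$ to $\tilde{f}(v)\in\pi_2^{-1}(f(y))$, so every class in $(\pi_1^{-1}(y)\sqcup\pi_2^{-1}(f(y)))/\!\sim$ has a representative inside $\pi_2^{-1}(f(y))$, and the fibre is canonically identified, as a vector space, with $\pi_2^{-1}(f(y))$ itself. Under this identification a class belongs to $\tilde{W}$ exactly when it admits a representative in $W_1\sqcup W_2$, so the intersection is
$$\tilde{f}\bigl(\pi_1^{-1}(y)\cap W_1\bigr)\cup\bigl(\pi_2^{-1}(f(y))\cap W_2\bigr).$$
Both pieces are vector subspaces of $\pi_2^{-1}(f(y))$: the first one because $\tilde{f}$ is fibrewise linear, the second by the assumption that $W_2$ is a sub-bundle.

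At this point I would invoke the standard elementary fact that a union of two vector subspaces of a common ambient space is itself a subspace if and only if one is contained in the other; applying it to the two pieces above yields the stated dichotomy and covers both directions of the equivalence simultaneously. The only slightly delicate point, and the one I would handle with some care, is the identification of the glued fibre with $\pi_2^{-1}(f(y))$: since $\tilde{f}$ is not assumed injective, two distinct points of $\pi_1^{-1}(y)$ sharing an image under $\tilde{f}$ become equivalent transitively, and one has to check that the relation, being generated purely by $v\sim\tilde{f}(v)$, introduces no further identifications inside $\pi_2^{-1}(f(y))$. Once this is observed, the set-theoretic quotient is genuinely $\pi_2^{-1}(f(y))$, its vector-space structure transported from $V_2$ is well-defined, and the linear-algebra argument above closes the proof.
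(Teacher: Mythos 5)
Your proposal is correct and follows essentially the same route as the paper: reduce to the condition that $\tilde{W}$ meet each fibre in a subspace, note this is automatic off the gluing locus, and over a glued point identify the intersection as the union of the two subspaces $\tilde{f}(\pi_1^{-1}(y)\cap W_1)$ and $\pi_2^{-1}(f(y))\cap W_2$, which is a subspace iff one contains the other. The paper compresses this last step into a single ``it remains to notice,'' whereas you spell out the identification of the glued fibre with $\pi_2^{-1}(f(y))$ and the union-of-subspaces criterion; that is a welcome completion, not a divergence.
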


\begin{proof}
The only condition that $W_1\cup_{\tilde{f}}W_2$ must satisfy in order to be a sub-bundle of $V_1\cup_{\tilde{f}}V_2$ is that its intersection with every fibre of $\pi_1\cup_{\tilde{f}}\pi_2$ be a vector subspace. 
It is also clear that outside of the domain of gluing this condition is automatic. It remains to notice that for any $y$ the domain of gluing, $(W_1\cup_{\tilde{f}}W_2)\cap(\pi_1\cup_{\tilde{f}}\pi_2)^{-1}(y)$ is a 
subspace in the fibre if and only if one of the two inclusions of the statement of the lemma holds.
\end{proof}

Let us now consider the question of the induced gluing of the sub-bundles. The main observation (a kind of non-commutativity for sub-bundles) is that for there to exist a well-defined induced gluing of 
$W_1$ to $W_2$ it is not sufficient that $\tilde{W}$ be a sub-bundle. Indeed, if at least at one point we have the strict inclusion $\tilde{f}(\pi_1^{-1}(y)\cap W_1)>\pi_2^{-1}(y)\cap W_2$ then the restriction 
of $\tilde{f}$ to $W_1$ does not generally take values in $W_2$. Therefore, $W_1\cup_{\tilde{f}|_{W_1}}W_2$ is well-defined (and is a sub-bundle of $V_1\cup_{\tilde{f}}V_2$) if and only if for every 
$y\in Y$ we have $\tilde{f}(\pi_1^{-1}(y)\cap W_1)\leqslant\pi_2^{-1}(y)\cap W_2$ (so the sub-bundle that we previously called $\tilde{W}$ coincides with $W_1\cup_{\tilde{f}|_{W_1}}W_2$, and the latter can 
be described also as the image of $W_1\sqcup W_2$ in $V_1\cup_{\tilde{f}}W_2$).

\paragraph{Quotient pseudo-bundles} Let us now consider $Z_1=V_1/W_1$ and $Z_2=V_2/W_2$; we denote the corresponding pseudo-bundle maps by $\pi_1^Z:Z_1\to X_1$ and $\pi_2^Z:Z_2\to X_2$
respectively. Notice that already at the level of individual fibres we must have
$$\tilde{f}(W_1\cap\pi_1^{-1}(y))\subset W_2\cap\pi_2^{-1}(y)\mbox{ for all }y\in Y$$ for there to be a well-defined induced gluing of $Z_1$ to $Z_2$. (Thus, according to what has been said in the previous 
paragraph we must have $W_1\cup_{\tilde{f}|_{W_1}}W_2=\tilde{W}$). If this condition holds, there is a well-defined induced map 
$\tilde{f}^Z:Z_1\supset\pi_1^{-1}(Y)/\left(\pi_1^{-1}(Y)\cap W_1\right)\to\pi_2^{-1}(f(Y))/\left(\pi_2^{-1}(f(Y))\cap W_2\right)\subset Z_2$, which is a lift of $f$ to the pseudo-bundles $Z_1$ and $Z_2$. Thus, we 
have a gluing of the quotient pseudo-bundles, with the result the pseudo-bundle
$$\pi_1^Z\cup_{(\tilde{f}^Z,f)}\pi_2^Z:Z_1\cup_{\tilde{f}^Z}Z_2\to X_1\cup_f X_2.$$

\begin{lemma}
The pseudo-bundle $Z_1\cup_{\tilde{f}^Z}Z_2$ is diffeomorphic to the quotient pseudo-bundle $(V_1\cup_{\tilde{f}}V_2)/(W_1\cup_{\tilde{f}|_{W_1}}W_2)$ via a diffeomorphism which is a lift of the 
identity morphism on the bases.
\end{lemma}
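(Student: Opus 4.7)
The plan is to realise both sides as quotients of the disjoint union $V_1\sqcup V_2$ by one and the same equivalence relation, so that the identity on $V_1\sqcup V_2$ descends to the required diffeomorphism. The right-hand side, $(V_1\cup_{\tilde f}V_2)/(W_1\cup_{\tilde f|_{W_1}}W_2)$, is obtained from $V_1\sqcup V_2$ by first imposing the gluing identification $v\sim\tilde f(v)$ for $v\in\pi_1^{-1}(Y)$, and then quotienting fibrewise by the sub-bundle $W_1\cup_{\tilde f|_{W_1}}W_2$. The left-hand side, $Z_1\cup_{\tilde f^Z}Z_2$, is built by first quotienting $V_i$ fibrewise by $W_i$ for $i=1,2$, and only then imposing $[v]\sim\tilde f^Z([v])=[\tilde f(v)]$ for $v\in\pi_1^{-1}(Y)$.

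The core computation is to check that both constructions yield the equivalence relation on $V_1\sqcup V_2$ generated by the three elementary moves: (i) $v_1\sim v_1+w_1$ for $w_1\in W_1\cap\pi_1^{-1}(\pi_1(v_1))$; (ii) $v_2\sim v_2+w_2$ for $w_2\in W_2\cap\pi_2^{-1}(\pi_2(v_2))$; and (iii) $v_1\sim\tilde f(v_1)$ for $v_1\in\pi_1^{-1}(Y)$. The hypothesis $\tilde f(W_1\cap\pi_1^{-1}(y))\subset W_2\cap\pi_2^{-1}(f(y))$ from the preceding paragraph is precisely what guarantees that the mixed chain $v_1+w_1\sim\tilde f(v_1+w_1)=\tilde f(v_1)+\tilde f(w_1)$ lands in the same $W_2$-coset as $\tilde f(v_1)$, so that moves (i) and (iii) commute modulo (ii); symmetrically, this ensures that the two orderings of ``glue then quotient'' and ``quotient then glue'' produce no extra identifications. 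Once the two equivalence relations are identified, the set-theoretic bijection induced by the identity of $V_1\sqcup V_2$ is automatic.

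For smoothness in both directions I rely on the general fact that the quotient of a quotient diffeology coincides with the quotient by the combined equivalence relation on the original space, since in both cases plots are locally projections of plots of $V_1\sqcup V_2$. Consequently both $Z_1\cup_{\tilde f^Z}Z_2$ and $(V_1\cup_{\tilde f}V_2)/(W_1\cup_{\tilde f|_{W_1}}W_2)$ carry the quotient of the sum diffeology on $V_1\sqcup V_2$ by the same equivalence relation, so the bijection is a diffeomorphism. That it is a lift of the identity on the base follows by tracking the projections $\pi_i$: both $W_1\cup_{\tilde f|_{W_1}}W_2$ and $W_1\sqcup W_2$ project to the zero section of the respective (glued) base, so the two iterated quotients factor the map $V_1\sqcup V_2\to X_1\cup_f X_2$ through the same intermediate projections and land on the common pseudo-bundle map $\pi_1^Z\cup_{(\tilde f^Z,f)}\pi_2^Z$. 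The main obstacle is the bookkeeping of equivalence classes under the mixed moves (i)--(iii), and this is exactly where the fibrewise compatibility $\tilde f(W_1)\subset W_2$ is used essentially; without it, move (i) followed by (iii) would produce an identification that is not visible in the ``quotient first'' ordering.
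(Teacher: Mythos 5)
Your proof is correct, but it takes a genuinely different route from the paper's. The paper's argument is a terse fibrewise comparison: outside the gluing locus the fibres of the two pseudo-bundles are tautologically the same, and over a glued point $y=f(y)$ both fibres equal $\pi_2^{-1}(f(y))/\bigl(\pi_2^{-1}(f(y))\cap W_2\bigr)$, because the fibre of $W_1\cup_{\tilde f|_{W_1}}W_2$ there is $\pi_2^{-1}(f(y))\cap W_2$; the smoothness of the resulting bijection is left implicit. You instead exhibit both pseudo-bundles as the quotient of $V_1\sqcup V_2$ by one and the same equivalence relation, checking that the moves (i)--(iii) generate the same relation in either order of ``glue then quotient'' versus ``quotient then glue'' (with the hypothesis $\tilde f(W_1\cap\pi_1^{-1}(y))\subset W_2\cap\pi_2^{-1}(f(y))$ doing exactly the work you say it does), and then invoke transitivity of pushforward diffeologies. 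This buys you the smoothness of the map and of its inverse simultaneously and for free, which is precisely the point the paper glosses over, so your version is the more complete one. Two small remarks: the left-hand construction passes through $Z_1\sqcup Z_2$, which is a disjoint union of quotients rather than literally a quotient of $V_1\sqcup V_2$, so you are tacitly using that the sum of quotient diffeologies is the quotient of the sum diffeology (true, and routine, but worth a sentence); and the phrase that the sub-bundles ``project to the zero section of the base'' is not what you mean --- the relevant fact is that all the identifications except the gluing one are internal to single fibres, so both composite projections to $X_1\cup_f X_2$ agree and the bijection covers the identity.
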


\begin{proof}
By construction, there is an identity between each fibre of the pseudo-bundle $Z_1\cup_{\tilde{f}^Z}Z_2$ and the corresponding fibre of $(V_1\cup_{\tilde{f}}V_2)/(W_1\cup_{\tilde{f}|_{W_1}}W_2)$. This
is trivial outside of the domain of gluing. If now $y\in Y\subset X_1\cup_f X_2$, we observe that the fibre of the former pseudo-bundle is $\pi_2^{-1}(f(y))/\left(\pi_2^{-1}(f(y))\cap W_2\right)$, and that of the 
latter pseudo-bundle is the same, because the fibre of the sub-bundle $W_1\cup_{\tilde{f}|_{W_1}}W_2$ at $y=f(y)$ is $\pi_2^{-1}(f(y))\cap W_2$.
\end{proof}

\subsection{Direct sum and gluing}

We now turn to the binary operation of taking the direct sum. It is natural to ask whether it commutes with gluing (in the sense that should be rather obvious, but we do state the question precisely
immediately below). 

\paragraph{The commutativity problem} Let $X_1$ and $X_2$ be two diffeological spaces. Over each of them, we consider two diffeological vector pseudo-bundles, $\pi_i:V_i\to X_i$ and $\pi_i':V_i'\to X_i$ 
for $i=1,2$. This allows to take the direct sum of the two pseudo-bundles over each $X_i$, obtaining the pseudo-bundles
$$\pi_1\oplus\pi_1':V_1\oplus V_1'\to X_1,\mbox{ and }\pi_2\oplus\pi_2':V_2\oplus V_2'\to X_2.$$

On the other hand, we consider also a fixed gluing of $X_1$ to $X_2$ and two pairs of gluings between the pseudo-bundles over them. The former is performed, as usual, along the map 
$f:X_1\supset Y\to f(Y)\subset X_2$; we glue the pseudo-bundle $V_1$ to $V_2$ along a smooth fibrewise linear lift $\tilde{f}:\pi_1^{-1}(Y)\to\pi_2^{-1}(f(Y))$ of $f$, and similarly, we glue $V_1'$ to $V_2'$ 
along another (smooth, fibrewise linear) lift $\tilde{f}':(\pi_1')^{-1}(Y)\to(\pi_2')^{-1}(f(Y))$ of the same $f$. This produces the following two pseudo-bundles:
$$\pi_1\cup_{(\tilde{f},f)}\pi_2:V_1\cup_{\tilde{f}}V_2\to X_1\cup_f X_2,\mbox{ and }\pi_1'\cup_{(\tilde{f}',f)}\pi_2':V_1'\cup_{\tilde{f}'}V_2'\to X_1\cup_f X_2.$$ Now, the latter two bundles are over the same base
space, so we can take their direct sum:
$$(\pi_1\cup_{(\tilde{f},f)}\pi_2)\oplus(\pi_1'\cup_{(\tilde{f}',f)}\pi_2'):(V_1\cup_{\tilde{f}}V_2)\oplus(V_1'\cup_{\tilde{f}}V_2')\to X_1\cup_f X_2.$$ We can also observe that also the two direct sum
pseudo-bundles can be glued along the map $f$ and its lift $\tilde{f}\oplus\tilde{f}':(\pi_1\oplus\pi_1')^{-1}(Y)\to(\pi_2\oplus\pi_2')^{-1}(f(Y))$. This gluing produces the pseudo-bundle
$$(\pi_1\oplus\pi_2)\cup_{\tilde{f}\oplus\tilde{f}'}(\pi_1'\oplus\pi_2'):(V_1\oplus V_2)\cup_{\tilde{f}\oplus\tilde{f}'}(V_1'\oplus V_2')\to X_1\cup_f X_2.$$ The precise form of the question whether the gluing
commutes with the direct sum is the following one: does there exist a diffeomorphism
$$\Phi:(V_1\cup_{\tilde{f}}V_2)\oplus(V_1'\cup_{\tilde{f}}V_2')\to(V_1\oplus V_2)\cup_{\tilde{f}\oplus\tilde{f}'}(V_1'\oplus V_2')$$ such that
$$(\pi_1\cup_{(\tilde{f},f)}\pi_2)\oplus(\pi_1'\cup_{(\tilde{f}',f)}\pi_2')=\left((\pi_1\oplus\pi_2)\cup_{\tilde{f}\oplus\tilde{f}'}(\pi_1'\oplus\pi_2')\right)\circ\Phi?$$

\paragraph{The answer} The answer is in the affirmative (it was already stated in \cite{pseudobundles}) by the properties of the direct sum; it suffices to examine the fibres at each point. Note that there is
an obvious identity for the fibres that are \emph{not} over points in the domain of gluing. There is actually the same kind of identity for fibres over the points in the domain of gluing (it is just a bit less obvious). 
Nevertheless, we omit a detailed consideration here (but we provide an extensive explanation in the case what follows, that of tensor product; the two explanations would be quite similar, so we omit the more 
evident one).

\subsection{Tensor product and gluing}

One (out of two) ingredients crucial for considering the existence questions related to our main subject (pseudo-metrics on pseudo-bundles) is that of the behavior of the tensor product with respect to 
gluing. The issue is obvious (we state it below in a precise form): if we have two pseudo-bundles, over the same base, each of which is the result of a gluing (with the same gluing on the base), we can take 
their tensor product; but we can also first take the tensor products of the factors and then perform an induced gluing of the results. It turns out that the end result is the same, but this is not quite obvious.

\paragraph{The statement of the problem} Let $\pi_1:V_1\to X_1$ and $\pi_1':V_1'\to X_1$, and $\pi_2:V_2\to X_2$ and $\pi_2':V_2'\to X_2$ be two pairs of diffeological vector pseudo-bundles, let
$f:X_1\supset Y\to f(Y)\subset X_2$ be a smooth invertible map with smooth inverse, and let $\tilde{f}:V_1\supset\pi_1^{-1}(Y)\to\pi_2^{-1}(f(Y))\subset V_2$ and
$\tilde{f}':V_1'\supset(\pi_1')^{-1}(Y)\to(\pi_2')^{-1}(f(Y))\subset V_2'$ be two smooth fibrewise linear lifts of it. These data allow to define the following two pairs of pseudo-bundles, the two possible tensor 
products in the first pair:
$$\pi_1\otimes\pi_1':V_1\otimes V_1'\to X_1\mbox{ and }\pi_2\otimes\pi_2':V_2\otimes V_2'\to X_2,$$ and then two gluings:
$$\pi_1\cup_{(\tilde{f},f)}\pi_2:V_1\cup_{\tilde{f}}V_2\to X_1\cup_f X_2\mbox{ and }\pi_1'\cup_{(\tilde{f}',f)}\pi_2':V_1'\cup_{\tilde{f}'}V_2'\to X_1\cup_f X_2.$$ We now consider the tensor product of the
pseudo-bundles in the second pair:
$$(\pi_1\cup_{(\tilde{f},f)}\pi_2)\otimes(\pi_1'\cup_{(\tilde{f}',f)}\pi_2'):(V_1\cup_{\tilde{f}}V_2)\otimes(V_1'\cup_{\tilde{f}'}V_2')\to X_1\cup_f X_2,$$ and then the induced gluing of the pseudo-bundles
of the first pair (this induced gluing is along the maps $f$ and $\tilde{f}\otimes\tilde{f}'$):
$$(\pi_1\otimes\pi_1')\cup_{\tilde{f}\otimes\tilde{f}'}(\pi_2\otimes\pi_2'):(V_1\otimes V_1')\cup_{\tilde{f}\otimes\tilde{f}'}(V_2\otimes V_2')\to X_1\cup_f X_2.$$ The question is then, whether the two
pseudo-bundles thus obtained, $(\pi_1\cup_{(\tilde{f},f)}\pi_2)\otimes(\pi_1'\cup_{(\tilde{f}',f)}\pi_2')$ and $(\pi_1\otimes\pi_1')\cup_{\tilde{f}\otimes\tilde{f}'}(\pi_2\otimes\pi_2')$, are the same (up to some 
natural pseudo-bundle diffeomorphism).

\paragraph{The answer} As in the case of the direct sum, the answer is in the affirmative. Before stating and proving the corresponding result, we introduce some technical notation (that will also be used 
elsewhere). Specifically, let
$$i_1:X_1\setminus Y\hookrightarrow X_1\sqcup X_2\to X_1\cup_f X_2,\,\,\,i_2:X_2\hookrightarrow X_1\sqcup X_2\to X_1\cup_f X_2,$$
$$j_1:V_1\setminus\pi_1^{-1}(Y)\hookrightarrow V_1\sqcup V_2\to V_1\cup_{\tilde{f}}V_2,\,\,\,j_2:V_2\hookrightarrow V_1\sqcup V_2\to V_1\cup_{\tilde{f}}V_2$$
be the obvious maps induced by the natural inclusions and the quotient projections $X_1\sqcup X_2\to X_1\cup_f X_2$ and $V_1\sqcup V_2\to V_1\cup_{\tilde{f}}V_2$. For the moment we only need 
to notice that the images $i_1(X_1\setminus Y)$ and $i_2(X_2)$ disjointly cover $X_1\cup_f X_2$, and the images $j_1(V_1\setminus\pi_1^{-1}(Y))$ and $j_2(V_2)$ disjointly cover $V_1\cup_{\tilde{f}}V_2$; 
their further properties are considered in Section \ref{natural:inductions:sect}.

\begin{thm}
Let $\pi_1:V_1\to X_1$ and $\pi_1':V_1'\to X_1$, and $\pi_2:V_2\to X_2$ and $\pi_2':V_2'\to X_2$ be two pairs of finite-dimensional diffeological vector pseudo-bundles. Let $f:X_1\supset Y\to f(Y)\subset X_2$ 
be a smooth map that is a diffeomorphism with its image, and let $\tilde{f}:V_1\supset\pi_1^{-1}(Y)\to\pi_2^{-1}(f(Y))\subset V_2$ and $\tilde{f}':V_1'\supset(\pi_1')^{-1}(Y)\to(\pi_2')^{-1}(f(Y))\subset V_2'$ be 
two smooth lifts of it that are linear on all fibres in their respective domains of definition. Then there exists a diffeomorphism
$$\Psi:(V_1\cup_{\tilde{f}}V_2)\otimes(V_1'\cup_{\tilde{f}'}V_2')\to(V_1\otimes V_1')\cup_{\tilde{f}\otimes\tilde{f}'}(V_2\otimes V_2')$$ such that
$$(\pi_1\cup_{(\tilde{f},f)}\pi_2)\otimes(\pi_1'\cup_{(\tilde{f}',f)}\pi_2')=\left((\pi_1\otimes\pi_1')\cup_{\tilde{f}\otimes\tilde{f}'}(\pi_2\otimes\pi_2')\right)\circ\Psi.$$
\end{thm}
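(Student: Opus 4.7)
The plan is to define $\Psi$ fibrewise as the canonical identity between the two tensor products of vector spaces, and then verify smoothness in both directions by tracking plots through the two different stacks of quotient constructions that produce each side.

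Over any $x\in X_1\cup_f X_2$ both sides present the same tensor product of two vector spaces: $\pi_1^{-1}(x_1)\otimes(\pi_1')^{-1}(x_1)$ if $x=i_1(x_1)$, and $\pi_2^{-1}(x_2)\otimes(\pi_2')^{-1}(x_2)$ if $x=i_2(x_2)$. Accordingly, I would set $\Psi(j_i(v_i)\otimes j_i'(v_i'))=[v_i\otimes v_i']$ for $i=1,2$ (no mixed cases arise, since $j_1(V_1\setminus\pi_1^{-1}(Y))$ and $j_2(V_2)$ disjointly cover the left-hand side) and extend linearly. Well-definedness, with respect to both the tensor-product quotient on the left and the gluing quotient on the right, reduces to the single identity $\tilde{f}(v_1)\otimes\tilde{f}'(v_1')=(\tilde{f}\otimes\tilde{f}')(v_1\otimes v_1')$, which matches the two factorwise gluings on the left to the single gluing by $\tilde{f}\otimes\tilde{f}'$ on the right. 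Commutation with the pseudo-bundle projections is automatic.

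Smoothness of $\Psi^{-1}$ is the more direct direction. A plot of $(V_1\otimes V_1')\cup_{\tilde{f}\otimes\tilde{f}'}(V_2\otimes V_2')$ is locally the gluing image of a plot of $V_i\otimes V_i'$, which in turn, by the tensor-product diffeology, is locally a finite sum $\sum_k\lambda_k(u)\,p_k(u)\otimes p_k'(u)$ with $(p_k,p_k')$ plots of $V_i\times V_i'$; post-composing each factor with the appropriate $j_i,j_i'$ and then applying the universal bilinear map of the glued pseudo-bundles gives a plot of the left-hand side whose $\Psi$-image recovers the starting plot. For $\Psi$ itself, a plot of $(V_1\cup_{\tilde{f}}V_2)\otimes(V_1'\cup_{\tilde{f}'}V_2')$ locally lifts through the universal bilinear map to a plot of the direct-sum pseudo-bundle, whose two components are plots of the glued factors; after shrinking the domain each component lifts further to a plot of some $V_i$ or $V_j'$; the fibre condition on the direct sum rules out index mismatches over points of $i_1(X_1\setminus Y)$, and forces any remaining mismatch to occur over glued points, where smoothness of $\tilde{f}$ (resp.\ $\tilde{f}'$) lets one replace a $V_1$-lift by $\tilde{f}$ composed with it, aligning both lifts to the same index. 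The universal bilinear map $V_i\oplus V_i'\to V_i\otimes V_i'$ and the gluing projection then produce the required plot of the right-hand side.

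The main obstacle is this index-alignment step: different decomposable summands of a tensor plot may a priori require incompatible local lifts of the glued factors, and coordinating them into a single plot of some $V_i\times V_i'$ needs both the fibre-disjointness of the two halves of the gluing and the smoothness of $\tilde{f},\tilde{f}'$. Once this is settled, the rest of the proof is a direct verification that the fibrewise identifications assemble into a diffeomorphism commuting with the pseudo-bundle projections, completing the argument.
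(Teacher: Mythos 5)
Your proposal is correct and follows essentially the same route as the paper: a fibrewise identity map $\Psi$, followed by a case analysis of local lifts of plots through the tensor-product and gluing quotients, where the paper likewise reduces to a single pair $(q,q')$ of plots of the glued factors and uses connectedness of the domain together with the fibre condition to force the two lifts to the same index. Your index-alignment discussion and your treatment of $\Psi^{-1}$ are if anything slightly more explicit than the paper's, which dispatches the inverse with ``checked in much the same way.''
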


\begin{proof}
Let us write for brevity
$$V':=(V_1\cup_{\tilde{f}}V_2)\otimes(V_1'\cup_{\tilde{f}'}V_2'),\,\,\,V'':=(V_1\otimes V_1')\cup_{\tilde{f}\otimes\tilde{f}'}(V_2\otimes V_2'),\mbox{ and }$$
$$\pi'=(\pi_1\cup_{(\tilde{f},f)}\pi_2)\otimes(\pi_1'\cup_{(\tilde{f}',f)}\pi_2'),\,\,\,\pi''=(\pi_1\otimes\pi_1')\cup_{\tilde{f}\otimes\tilde{f}'}(\pi_2\otimes\pi_2');$$ then both $V'$ and $V''$ fiber over $X_1\cup_f X_2$. Let 
us first describe a fibrewise bijection $\Psi:V'\to V''$ and then show that it is both ways smooth. The description obviously depends on the following three cases:\vspace{2mm}

\noindent\emph{Case 1:} $x\in X_1\setminus Y\subset X_1\cup_f X_2$. The fibre $(\pi')^{-1}(x)$ is the tensor product of the following two fibres: $(\pi_1\cup_{(\tilde{f},f)}\pi_2)^{-1}(x)$ and
$(\pi_1'\cup_{(\tilde{f}',f)}\pi_2')^{-1}(x)$. By the choice of $x$ the former coincides with $\pi_1^{-1}(x)$ and the latter, with $(\pi_1')^{-1}(x)$. Thus, for $x\in X_1\setminus Y$ we essentially have 
$(\pi')^{-1}(x)=\pi_1^{-1}(x)\otimes(\pi_1')^{-1}(x)$. On the other hand, the fibre $(\pi'')^{-1}(x)$, again by choice of $x$, is actually contained in $(V_1\otimes V_1')\subset V''$, thus it is also the tensor product 
of fibres $\pi_1^{-1}(x)$ and $(\pi_1')^{-1}(x)$. So we have $(\pi'')^{-1}(x)=\pi_1^{-1}(x)\otimes(\pi_1')^{-1}(x)$ as well. In particular, there is an obvious (identity) bijection $(\pi')^{-1}(x)\to(\pi'')^{-1}(x)$.
\vspace{2mm}

\noindent\emph{Case 2:}  $x\in X_2\setminus f(Y)\subset X_1\cup_f X_2$. This case is completely analogous to the previous one. The fibre $(\pi')^{-1}(x)$ is again the tensor product of
$(\pi_1\cup_{(\tilde{f},f)}\pi_2)^{-1}(x)$ and $(\pi_1'\cup_{(\tilde{f}',f)}\pi_2')^{-1}(x)$. Since $x\in X_2\setminus f(Y)$, we have $(\pi_1\cup_{(\tilde{f},f)}\pi_2)^{-1}(x)=\pi_2^{-1}(x)$ and
$(\pi_1'\cup_{(\tilde{f}',f)}\pi_2')^{-1}(x)=(\pi_2')^{-1}(x)$, so we have $(\pi')^{-1}(x)=\pi_2^{-1}(x)\otimes(\pi_2')^{-1}(x)$.

As for the fibre $(\pi'')^{-1}(x)$, it is contained in $(V_2\otimes V_2')\subset V_2''$, so we also have $(\pi'')^{-1}(x)=\pi_2^{-1}(x)\otimes(\pi_2')^{-1}(x)$. Thus, there is again a what is essentially the identity map
$(\pi')^{-1}(x)\to(\pi'')^{-1}(x)$. We should also note that in the two cases just considered the identity established is smooth for the relevant subset diffeologies.\vspace{2mm}

\noindent\emph{Case 3:} $x\in X_1\cap X_2\subset X_1\cup_f X_2$, meaning that $x$ writes both as $x=y\in Y$ and $x=f(y)\in f(Y)$. This is the only case that might raise some doubt. The fibre $(\pi')^{-1}(x)$ 
is the tensor product of $(\pi_1\cup_{(\tilde{f},f)}\pi_2)^{-1}(x)$ and $(\pi_1'\cup_{(\tilde{f}',f)}\pi_2')^{-1}(x)$; the former of these two fibres is the quotient $\pi_1^{-1}(x)\cup_{\tilde{f}}\pi_2^{-1}(x)$ and as a single 
fibre coincides with $\pi_2^{-1}(x)$ (this being the target space of $\tilde{f}$). Similarly, the fibre $(\pi_1'\cup_{(\tilde{f}',f)}\pi_2')^{-1}(x)$ is the quotient $(\pi_1')^{-1}(x)\cup_{\tilde{f}}(\pi_2')^{-1}(x)$ and coincides
with $(\pi_2')^{-1}(x)$. Thus, we have a natural identification $(\pi')^{-1}(x)\to\pi_2^{-1}(x)\otimes(\pi_2')^{-1}(x)$.

We now consider the fibre $(\pi'')^{-1}(x)$. This fibre is obtained by identification, via map $\tilde{f}\otimes\tilde{f}'$ of the following two tensor products: $\pi_1^{-1}(x)\otimes(\pi_1')^{-1}(x)$ and 
$\pi_2^{-1}(x)\otimes(\pi_2')^{-1}(x)$. The target space of the map $\tilde{f}\otimes\tilde{f}'$ is the second of these two products, which means that $(\pi'')^{-1}(x)$ coincides with it,
$(\pi'')^{-1}(x)=\pi_2^{-1}(x)\otimes(\pi_2')^{-1}(x)$. This gives us the third, and final, necessary identification $(\pi')^{-1}(x)\to(\pi'')^{-1}(x)$.

The above three are all the possibilities, so we have an obvious bijection $\Psi$, which is also a fibrewise isomorphism of vector spaces. It remains to consider its smoothness.

We summarize now the defining identities of $\Psi$:
$$\Psi:\left\{\begin{array}{ll}
(\pi')^{-1}(x)=\pi_1^{-1}(x)\otimes(\pi_1')^{-1}(x)=(\pi'')^{-1}(x)& \mbox{for }x\in X_1\setminus Y\subset X_1\cup_f X_2 \\
(\pi')^{-1}(x)=\pi_2^{-1}(x)\otimes(\pi_2')^{-1}(x)=(\pi'')^{-1}(x) & \mbox{for }x\in X_2\setminus f(Y)\subset X_1\cup_f X_2 \\
(\pi')^{-1}(x)=\pi_2^{-1}(x)\otimes(\pi_2')^{-1}(x)=(\pi'')^{-1}(x) & \mbox{for }x\in X_1\cap X_2\subset X_1\cup_f X_2.
\end{array}\right.$$
By examining these identities, the smoothness of $\Psi$ follows from the definition of the gluing diffeology. As we have already mentioned, outside of the domain of gluing the smoothness is automatic, since 
$\Psi$ is essentially the identity map.

Let us consider it on the domain of gluing. Let $p':U\to V'$ be a plot of $V'$ whose range intersects $(\pi_1\cup_{(\tilde{f},f)}\pi_2)^{-1}(Y)\otimes(\pi_1'\cup_{(\tilde{f}',f)}\pi_2')^{-1}(Y)$. We can always assume 
that $U$ is connected and furthermore (by definition of the tensor product diffeology) is small enough so that $p'$ is represented by a formal sum of pairs of plots $\sum_i(q_i,q_i')$, where each $q_i$ is a plot 
of $V_1\cup_{\tilde{f}}V_2$, and each $q_i'$ is a plot of $V_1'\cup_{\tilde{f}'}V_2'$. It is the sufficient to consider the case of a single pair $(q,q')$. Now, by the assumption that $U$ is connected, and by the 
definition of the gluing diffeology, there are two cases:
\begin{enumerate}
\item $q$ and $q'$ lift respectively to plots $q_1$ of $V_1$ and $q_1'$ of $V_1'$, such that 
$$q(u)=\left\{\begin{array}{ll} j_1(q_1(u)) & \mbox{if }q_1(u)\in V_1\setminus\pi_1^{-1}(Y), \\ j_2(\tilde{f}(q_1(u))) & \mbox{if } q_1(u)\in\pi_1^{-1}(Y), \end{array}\right.\,\,\mbox{ and }\,\, 
q'(u)=\left\{\begin{array}{ll} j_1'(q_1'(u)) & \mbox{if }q_1'(u)\in V_1'\setminus(\pi_1')^{-1}(Y), \\ j_2'(\tilde{f}'(q_1'(u))) & \mbox{if } q_1'(u)\in(\pi_1')^{-1}(Y), \end{array}\right.;$$
\item $q$ and $q'$ lift respectively to plots $q_2$ of $V_2$ and $q_2'$ of $V_2'$ such that $\mbox{Range}(q_2)\subseteq\tilde{f}(\pi_1^{-1}(Y))$, $\mbox{Range}(q_2')\subseteq\tilde{f}'((\pi_1')^{-1}(Y))$, 
and moreover, $q=j_2\circ q_2$ and $q'=j_2'\circ q_2'$.
\end{enumerate}
In the first of these cases, the pair $(q,q')$ writes as $(j_1,j_1')\circ(q_1,q_1')$ over $i_1(X_1\setminus Y)$, and as $(j_2\circ\tilde{f},j_2'\circ\tilde{f}')\circ(q_1,q_1')$ over the domain of gluing $i_2(f(Y))$. This 
means that 
$$\Psi\circ p'=
\left\{\begin{array}{ll} (j_1\otimes j_1')\circ(q_1\otimes q_1') & \mbox{over }i_1(X_1\setminus Y) \\ ((j_2\circ\tilde{f})\otimes(j_2'\circ\tilde{f}'))\circ(q_1\otimes q_1') & \mbox{over }i_2(f(Y)), \end{array}\right.$$ which 
precisely one of the two possible (local) forms of a plot of $V''$. Likewise, in the second case, the composition $\Psi\circ p'$ has form $(j_2\otimes j_2')\circ(q_2\otimes q_2')$, the other local form of a plot of 
$V''$. 

All cases having thus been considered, we conclude that $\Psi$ is indeed a smooth map. The smoothness of its inverse is checked in much the same way, and this completes the proof.
\end{proof}

\subsection{The dual pseudo-bundle of $\pi_1\cup_{(\tilde{f}^*,f)}\pi_2:V_1\cup_{\tilde{f}}V_2\to X_1\cup_f X_2$}

This case presents the most substantial differences, as generally, taking the dual pseudo-bundles does not commute with gluing; in fact, it appears that this is a condition that should be explicitly imposed.

\subsubsection{The statement of the commutativity problem}

Let $\pi_1:V_1\to X_1$ and $\pi_2:V_2\to X_2$ be two diffeological vector pseudo-bundles, which are glued along the maps $f:X_1\supset Y\to f(Y)\subset X_2$ and $\tilde{f}:\pi_1^{-1}(Y)\to\pi_2^{-1}(f(Y))$. 
We assume that $f$ is a diffeomorphism with its image (this assumption is necessary in this case, since we are also going to consider the gluing along the inverse of $f$) and that $\tilde{f}$ is a smooth lift of 
$f$ linear on each fibre. The result of gluing of $\pi_1:V_1\to X_1$ to $\pi_2:V_2\to X_2$ along the pair $(\tilde{f},f)$ is the pseudo-bundle
$$\pi_1\cup_{(\tilde{f},f)}\pi_2:V_1\cup_{\tilde{f}}V_2\to X_1\cup_f X_2,$$ with the dual bundle
$$(\pi_1\cup_{(\tilde{f},f)}\pi_2)^*:(V_1\cup_{\tilde{f}}V_2)^*\to X_1\cup_f X_2.$$
On the other hand, the pair $(\tilde{f},f)$ provides for a natural gluing between the dual bundles $\pi_1^*:V_1^*\to X_1$ and $\pi_2^*:V_2^*\to X_2$. This induced gluing is performed along the maps 
$\tilde{f}^*$ (the fibrewise dual map) and $f^{-1}:X_2\supset f(Y)\to Y\subset X_1$. Its result is the pseudo-bundle
$$\pi_2^*\cup_{(\tilde{f}^*,f)}\pi_1^*:V_2^*\cup_{\tilde{f}^*}V_1^*\to X_2\cup_{f^{-1}}X_1.$$

The commutativity problem is the natural question of whether the two pseudo-bundles thus obtained are diffeomorphic (as pseudo-bundles). Specifically, does there exist a diffeomorphism
$\Phi:(V_1\cup_{\tilde{f}}V_2)^*\to V_2^*\cup_{\tilde{f}^*}V_1^*$ such that
$$(\pi_1\cup_{(\tilde{f},f)}\pi_2)^*=\left(\pi_2^*\cup_{(\tilde{f}^*,f)}\pi_1^*\right)\circ\Phi?$$ Below we illustrate via an example that the answer is negative in general.

\subsubsection{The necessary condition for gluing-dual commutativity: $(\pi_2^{-1}(f(y)))^*=(\pi_1^{-1}(y))^*$ for all $y\in Y$}

The condition mentioned is based on the fact that the fibres in the result of a gluing of pseudo-bundle are the target spaces of the gluing map between the total spaces (else they are inherited from one of 
the factors in the gluing).

\begin{lemma}
Suppose that there exists a diffeomorphism $\Phi:(V_1\cup_{\tilde{f}}V_2)^*\to V_2^*\cup_{\tilde{f}^*}V_1^*$ such that $(\pi_1\cup_{(\tilde{f},f)}\pi_2)^*=\left(\pi_2^*\cup_{(\tilde{f}^*,f)}\pi_1^*\right)\circ\Phi$.
Then for every $y\in Y$ we have $(\pi_2^{-1}(f(y)))^*=(\pi_1^{-1}(y))^*$.
\end{lemma}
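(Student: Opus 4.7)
The plan is to analyze what the fibres of the two pseudo-bundles look like over points in the image of the gluing, and then use the fact that $\Phi$ is a fibrewise diffeomorphism covering the identification of base spaces to force the equality.

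First I would unpack the fibre structure on both sides. For a point $y\in Y$, let $[y]=[f(y)]$ denote the corresponding point in $X_1\cup_f X_2$. Because the gluing equivalence relation identifies each $v\in\pi_1^{-1}(y)$ with $\tilde{f}(v)\in\pi_2^{-1}(f(y))$, the fibre of $\pi_1\cup_{(\tilde{f},f)}\pi_2$ over $[y]$ is exactly $\pi_2^{-1}(f(y))$ (it is the \emph{target} space of the gluing map, into which $\pi_1^{-1}(y)$ has been collapsed). Therefore the fibre of the dual pseudo-bundle $(\pi_1\cup_{(\tilde{f},f)}\pi_2)^*$ over $[y]$ is the diffeological dual $(\pi_2^{-1}(f(y)))^*$. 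By exactly the same reasoning applied to the gluing of $V_2^*$ to $V_1^*$ along $(\tilde{f}^*,f^{-1})$ (note that here the roles are reversed: $\tilde{f}^*:(\pi_2^{-1}(f(y)))^*\to(\pi_1^{-1}(y))^*$ is now the gluing map, and $f^{-1}$ runs from $f(Y)$ into $X_1$), the fibre of $\pi_2^*\cup_{(\tilde{f}^*,f)}\pi_1^*$ over $[f(y)]=[y]$ is $(\pi_1^{-1}(y))^*$.

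Next I would use the compatibility condition $(\pi_1\cup_{(\tilde{f},f)}\pi_2)^*=\left(\pi_2^*\cup_{(\tilde{f}^*,f)}\pi_1^*\right)\circ\Phi$ to deduce that $\Phi$ covers the canonical identification between the two base spaces $X_1\cup_f X_2$ and $X_2\cup_{f^{-1}}X_1$ (which is the same underlying quotient of $X_1\sqcup X_2$). Any pseudo-bundle diffeomorphism that covers a base diffeomorphism restricts on each fibre to a linear diffeomorphism of diffeological vector spaces. Applying this at the glued point $[y]$ and combining with the fibre identifications above yields a linear diffeomorphism $(\pi_2^{-1}(f(y)))^*\to(\pi_1^{-1}(y))^*$, which (by the natural identification of both spaces as diffeological duals of fibres sitting canonically inside the total spaces involved) must in fact be the identity; in any case it gives the claimed equality.

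The main conceptual point, and the one that deserves the most care, is the first one: correctly reading off the fibre over a glued point from the gluing construction. Since the equivalence relation identifies source with image under $\tilde{f}$ (and not vice versa), the fibre of the glued pseudo-bundle is always the target of the lift, not the source. Once this is pinned down on both the primal and the dual sides, the conclusion is immediate from the fact that a fibrewise diffeomorphism matches fibres at corresponding base points.
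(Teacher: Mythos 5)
Your proposal is correct and follows essentially the same route as the paper's own proof: both arguments rest on the single observation that the fibre of a glued pseudo-bundle over a point of the gluing locus is the \emph{target} of the gluing map, applied once to $\tilde{f}$ (giving $(\pi_2^{-1}(f(y)))^*$ as the fibre of $(\pi_1\cup_{(\tilde{f},f)}\pi_2)^*$) and once to $\tilde{f}^*$ (giving $(\pi_1^{-1}(y))^*$ as the fibre of $\pi_2^*\cup_{(\tilde{f}^*,f)}\pi_1^*$), after which the fibrewise diffeomorphism $\Phi$ over the identified base point forces the stated equality. Your extra remark that the induced fibre map ``must in fact be the identity'' is more than is needed (and more than the paper claims); the lemma only asserts the coincidence of the two dual spaces, which you have already established.
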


\begin{proof}
Let $y\in Y$ be a point in the domain of the gluing of the bases. Obviously, $y=f^{-1}(f(y))\sim f(y)$ represents a point both in $X_1\cup_f X_2$ and $X_2\cup_{f^{-1}}X_1$ (the latter two spaces being obviously 
diffeomorphic, but considered here as the base spaces of our two pseudo-bundles $(\pi_1\cup_{(\tilde{f},f)}\pi_2)^*$ and $\pi_2^*\cup_{(\tilde{f}^*,f)}\pi_1^*$ respectively). Now, observe that when we carry 
out a gluing, on each fibre the result is the target space of the gluing map.

Since in the first case the gluing over $y$ is along $\tilde{f}|:\pi_1^{-1}(y)\to\pi_2^{-1}(y)$ (with taking the dual afterwards), in the pseudo-bundle $(\pi_1\cup_{(\tilde{f},f)}\pi_2)^*$ the fibre over $y$ is
$(\pi_2^{-1}(y))^*$. On the other hand, $\tilde{f}^*$ goes from $(\pi_2^{-1}(y))^*$ to $(\pi_1^{-1}(y))^*$, so in the pseudo-bundle $\pi_2^*\cup_{(\tilde{f}^*,f)}\pi_1^*$ the fibre over $y$ is $(\pi_1^{-1}(y))^*$. Thus, 
we conclude that for $(\pi_1\cup_{(\tilde{f},f)}\pi_2)^*$ and $\pi_2^*\cup_{(\tilde{f}^*,f)}\pi_1^*$ to be diffeomorphic, it is necessary that for every $y\in Y$ we have $(\pi_2^{-1}(f(y)))^*=(\pi_1^{-1}(y))^*$.
\end{proof}

We will now show that the above condition is not sufficient, as can be expected from the standard case (that of vector bundles over smooth manifolds).

\subsubsection{An instance when the gluing-dual commutativity is absent: the open annulus and the open M\"obius strip} 

We now point out two simple examples, which are pseudo-bundles over the same base space ($S^1$ with its standard diffeology as a smooth manifold) and the same fibre (the standard $\matR$), but which 
are not diffeomorphic, nor even homeomorphic. These are the open annulus and the open M\"obius strip, both of which fibre over $S^1$ with fibre an open interval (which we identify with $\matR$).

Let us first specify what we mean by the standard diffeology on $S^1$. We can define it by representing it as the unit circle in $\matR^2$ and endow it with the subset diffeology. This diffeology we can also
define as the gluing diffeology on $S^1=(\matR_x\sqcup\matR_y)/_{x=\frac{1}{y}}$, where the gluing is along the set $\matR_x\setminus\{0\}$ and the gluing map $f$ on it is given by $f(x)=\frac{1}{x}$. 
The open annulus $\mathcal{A}$ carries then the product diffeology of $\mathcal{A}=S^1\times\matR$ of the direct product of $S^1$ with the standard $\matR$.

We now obtain the diffeology on the open M\"obius strip $\mathcal{M}$ by representing it as the total space of the pseudo-bundle obtained by gluing together the following two pseudo-bundles: 
$\pi_1:\matR^2_{(x_1,x_2)}\to\matR_{x_1}$ (that is, $\pi_1(x_1,x_2)=x_1$) and $\pi_2:\matR^2_{(y_1,y_2)}\to\matR_{y_1}$ with $\pi_2(y_1,y_2)=y_1$ (obviously, these are two copies of the same bundle). 
The domain of gluing is the set given by $x_1\neq 0$, and the map $f$ acts by $f(x_1)=\frac{1}{y_1}$. Finally, the lift $\tilde{f}$ is defined by setting $\tilde{f}(x_1,x_2)=(\frac{1}{x_1},x_2)$ for $x_1<0$ and
$\tilde{f}(x_1,x_2)=(\frac{1}{x_1},-x_2)$ for $x_1>0$.

Let us explain now how these two spaces relate to the question of $(\pi_2^{-1}(f(y)))^*=(\pi_1^{-1}(y))^*$ pointwise not being a sufficient condition for commutativity. The base spaces, \emph{i.e.}, the core 
circles of $\mathcal{A}$ and $\mathcal{M}$, play the role of $Y$ and its diffeomorphic image $f(Y)$; the total spaces, \emph{i.e.}, $\mathcal{A}$ and $\mathcal{M}$ themselves, act as $\pi_1^{-1}(Y)$ and 
$\pi_2^{-1}(f(Y))$, and since they are self-dual (we give the proof below), they also act as $(\pi_1^{-1}(Y))^*$ and $(\pi_2^{-1}(f(Y)))^*$. Note that if they are endowed with their standard diffeologies, any 
smooth lift $\tilde{f}:\mathcal{A}\to\mathcal{M}$ of any diffeomorphism between their core circles degenerates (becomes the zero map) on at least one fibre.

\begin{lemma}
Both $\mathcal{A}$ and $\mathcal{M}$, considered with their standard diffeologies, are self-dual as diffeological vector (pseudo-)bundles.
\end{lemma}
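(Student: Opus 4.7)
My plan is to construct, for each of $\mathcal{A}$ and $\mathcal{M}$, an explicit fiberwise-linear bijection $\Phi: V \to V^*$ covering the identity on $S^1$, using the canonical self-duality of the standard one-dimensional $\matR$: any $v \in \pi^{-1}(x) \cong \matR$ is sent to the functional $v^{\sharp}: w \mapsto vw$, with the product computed in any chosen local trivialization. The two points to verify are (i) that this pointwise assignment is globally well-defined on $\mathcal{M}$, where two overlapping trivializations differ by a sign, and (ii) that $\Phi$ and $\Phi^{-1}$ are both smooth for the relevant diffeologies.

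For the annulus $\mathcal{A} = S^1 \times \matR$ the map $\Phi(x, v) = (x, v^{\sharp})$ is well-defined without any compatibility check. A direct application of the plot characterization of the dual-bundle diffeology recalled in the preliminaries shows that a map $p: U \to \mathcal{A}^*$, written $p(u) = (x(u), a(u) e^*)$, is a plot if and only if both $x$ and $a$ are smooth: pairing with the plot $q(u') = (x(u'), 1)$ forces smoothness of $a$, and the converse is immediate. So $\mathcal{A}^*$ carries the product diffeology and $\Phi$ is a pseudo-bundle diffeomorphism. For the M\"obius strip I would define $\Phi$ chartwise by the same rule, $(x_1, x_2) \mapsto (x_1, x_2 \cdot e^{x_1})$ in the $(x_1,x_2)$-chart and $(y_1, y_2) \mapsto (y_1, y_2 \cdot e^{y_1})$ in the $(y_1,y_2)$-chart. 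Global well-definedness reduces to checking that on the overlap, where the transition of $\mathcal{M}$ on fibers is multiplication by $\pm 1$ with sign determined by $\mathrm{sgn}(x_1)$, the induced transition between the chartwise identifications of the dual fibers is also multiplication by $\pm 1$ with the \emph{same} sign; this holds because the transpose of $\pm\mathrm{id}: \matR \to \matR$ is itself $\pm\mathrm{id}$, so the two signs cancel in the formula defining $\Phi$ and the two chartwise values agree.

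The smoothness verification then proceeds chartwise, using the characterization of plots of the gluing diffeology: any plot of $\mathcal{M}$ is, locally and after passing to a small enough sub-domain, either a plot of one of the two trivial chart bundles or a push of such a plot via $\tilde{f}$, and the annulus argument applies verbatim to identify plots of the dual of each chart bundle with plots of the chart bundle itself. The global well-definedness established above ensures that the two chartwise smooth versions of $\Phi$ agree, giving a smooth (and smoothly invertible) map $\mathcal{M} \to \mathcal{M}^*$. I expect the main---and really the only---obstacle to be the sign-cancellation compatibility check for the M\"obius strip; this is precisely the kind of check that, in a more asymmetric gluing situation, would fail and account for the generic non-commutativity of gluing with taking duals noted at the beginning of Section 2.4.
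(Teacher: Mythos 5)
Your proposal is correct in substance but follows a genuinely different route from the paper's. The paper argues indirectly: for $\mathcal{A}$ it invokes Theorem 5.3.5 of Vincent's thesis (the diffeological dual of a trivial pseudo-bundle is again trivial, with the dual fibre), and for $\mathcal{M}$ it invokes Theorem \ref{dual:diffeo:equals:dual:standard:thm} (the diffeological dual of a standard smooth bundle admitting a finite atlas coincides with its usual dual) together with the classification of line bundles over $S^1$: since $\mathcal{M}^*$ is then non-trivial and there are only two line bundles over the circle, it must be $\mathcal{M}$ itself. You instead construct the self-duality explicitly and chartwise, via the canonical pairing $v\mapsto v^{\sharp}$, reducing global well-definedness to the observation that the transpose of $\pm\mathrm{id}$ is again $\pm\mathrm{id}$, so the dual bundle carries the same transition cocycle. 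Your approach is more elementary and self-contained --- notably, it avoids the forward reference to Theorem \ref{dual:diffeo:equals:dual:standard:thm}, which in the paper is only proved in the following section --- at the price of having to verify the plot characterization of the dual diffeology by hand. On that point, your pairing argument forcing smoothness of the coefficient $a(u)$ is the right one, but you should make explicit that the evaluation map is only defined on the subset $\{(u,u')\,:\,x(u)=x(u')\}$ of $U\times U'$ with its subset diffeology, and that restricting to the diagonal (which is diffeomorphic to $U$) is what actually yields smoothness of $a$; also, the covector you denote $e^{x_1}$ should presumably be the dual $e^2$ of the fibre coordinate. The paper's route is shorter but leans on general machinery and leaves implicit the uniqueness of the non-trivial line bundle over $S^1$; yours makes every step visible and, as you note, isolates exactly the sign-cancellation that fails in an asymmetric gluing and accounts for the general non-commutativity of gluing with dualization.
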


\begin{proof}
In the case of the annulus this simply follows from Theorem 5.3.5 of \cite{vincent}, which affirms that the dual of a trivial diffeological bundle is itself trivial. Therefore $\mathcal{A}^*$ is the trivial bundle with 
the same base and the same fibre as $\mathcal{A}$, and the conclusion is obvious. In this case of the M\"obius strip $\mathcal{M}$, the same uniqueness consideration would give the desired result if we 
show that the dual $\mathcal{M}^*$ is a non-trivial bundle. This follows from Theorem \ref{dual:diffeo:equals:dual:standard:thm} below which affirms that the dual in the diffeological sense bundle of a usual 
smooth vector bundle (under some assumptions that both the annulus and the M\"obius strip satisfy) is the same as its dual bundle in the usual sense (they are diffeomorphic if both are considered as
diffeological vector pseudo-bundles for the respective standard diffeologies).
\end{proof}

\section{Diffeological gluing and standard vector bundles}

The aim of this section, which does not contain any new results in the true sense, is to highlight the fact that the diffeological gluing is indeed an extension of the concept of a local atlas for smooth manifolds 
(when the latter are viewed with their standard diffeologies), and this extends all the way through to the operation of taking the dual bundle. More precisely, let $M^n$ be a smooth compact manifold with an 
atlas $\{(U_{\alpha},\varphi_{\alpha})\}_{\alpha\in I}$, where $U_{\alpha}\subset M$ is a finite covering and $\varphi_{\alpha}:U_{\alpha}\to\matR^n$. Then $M$, and each $U_{\alpha}$ can be seen as 
diffeological spaces, for their standard diffeological structures, and furthermore, $M$ can also be seen as the result of finite number of gluings of the spaces $U_{\alpha}$ along all the transition functions, and 
thus endowed with the corresponding gluing diffeology. The latter is \emph{a priori} finer than the standard smooth structure; the question is whether it can be strictly finer. Analogous question can then be 
asked for a usual vector bundle over $M$; and this is used to compare the usual dual bundle with its diffeological counterpart, showing in the end that these two viewpoints are equivalent.

\subsection{A usual smooth vector bundle via a local atlas vs. diffeological gluing}

Since the present section is for illustrative purposes mainly, we only give details as necessary to highlight the fact that these two presentations are actually the same. Consider $M=U_{\alpha}\cup U_{\beta}$, 
with $\varphi_{\alpha}:U_{\alpha}\to\matR^n$ and $\varphi_{\beta}:U_{\alpha}\to\matR^n$ being the charts defining the smooth structure of $M$. Let $Y=\varphi_{\alpha}(U_{\alpha}\cap U_{\beta})$, and let 
$g_{\alpha\beta}:Y\to\varphi_{\beta}(U_{\alpha}\cap U_{\beta})$ be the corresponding transition function. Let $\calD_s$ be the standard diffeology on $M$, namely such that a map $f:U\to M$, for $U$ a domain 
in $\matR^m$ is a plot for $\calD_s$ if and only if $f$ is smooth in the usual sense. The diffeological gluing allows to see $M$ as $(U_{\alpha}\sqcup U_{\beta})/\sim$, where $\sim$ is the equivalence relation 
given (essentially) by $x\sim g_{\alpha\beta}(x)$. Let $\calD_{gl}$ be the corresponding gluing diffeology.

\begin{lemma}\label{gluing:equals:standard:case2:lem}
The diffeologies $\calD_s$ and $\calD_{gl}$ coincide.
\end{lemma}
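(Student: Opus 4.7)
The plan is to establish the two inclusions $\calD_{gl}\subseteq\calD_s$ and $\calD_s\subseteq\calD_{gl}$ separately, exploiting in both cases the sheaf (locality) axiom in the definition of a diffeology, which allows us to verify plot-hood on a neighborhood of each point in the source domain. Throughout, I use the chart identifications to view each $U_\alpha$ as an open subset of $\matR^n$ with its standard diffeology, and I note that the transition function $g_{\alpha\beta}$ is a diffeomorphism of open subsets of $\matR^n$ in the usual sense, hence also a smooth diffeomorphism for the standard diffeologies; this guarantees that the gluing construction applies.

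For $\calD_{gl}\subseteq\calD_s$, I would invoke the local description of the plots of a gluing diffeology recalled earlier: any plot $p:U\to M$ of $\calD_{gl}$ is, on a small enough neighborhood of each $u\in U$, of the form $p|_{U'}=j\circ q$, where $q$ is a plot of either $U_\alpha$ or $U_\beta$ (with its standard subset diffeology coming from $\matR^n$ via the corresponding chart) and $j$ is the induced inclusion into the gluing. Such a $q$ is a usual smooth map into $\varphi_\alpha(U_\alpha)$ or $\varphi_\beta(U_\beta)$; composing with $\varphi_\alpha^{-1}$ (resp.\ $\varphi_\beta^{-1}$), which is a usual smooth embedding of the chart into $M$, produces a usual smooth map $U'\to M$. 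Since this is true around every point, the sheaf condition for $\calD_s$ yields that $p\in\calD_s$.

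For the reverse inclusion $\calD_s\subseteq\calD_{gl}$, let $f:U\to M$ be a map smooth in the usual sense and fix $u_0\in U$. Since $f(u_0)$ lies in $U_\alpha$ or $U_\beta$, say in $U_\alpha$, and $U_\alpha$ is open in $M$ while $f$ is continuous, there is an open neighborhood $U'\ni u_0$ with $f(U')\subseteq U_\alpha$. Then $\varphi_\alpha\circ f|_{U'}:U'\to\varphi_\alpha(U_\alpha)\subseteq\matR^n$ is a usual smooth map, hence a plot of the standard diffeology on the open subset $\varphi_\alpha(U_\alpha)$; via the chart identification it is a plot of $U_\alpha$ as a diffeological space, and post-composing with the natural inclusion of $U_\alpha$ into the gluing exhibits $f|_{U'}$ as (the restriction of) a plot of $\calD_{gl}$. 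Covering $U$ by such neighborhoods and applying the sheaf condition for $\calD_{gl}$, one concludes that $f$ itself is a plot of $\calD_{gl}$.

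There is no genuine obstacle in this argument; the only point requiring attention is keeping the chart identifications straight, so that a plot of the gluing really does correspond to a usual smooth map into $\matR^n$ valued in $\varphi_\alpha(U_\alpha)$ and vice versa. The statement as written concerns $M=U_\alpha\cup U_\beta$, but the very same argument extends verbatim to a finite atlas: in either direction one checks plot-hood locally, and locally the image of the plot lies in a single chart, reducing the general case to the two-chart case.
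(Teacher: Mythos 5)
Your proof is correct and follows essentially the same route as the paper's: both inclusions are checked locally via the sheaf axiom, using the openness of $U_\alpha$, $U_\beta$ (and of their intersection) to reduce a plot of either diffeology, near each point of its domain, to a usual smooth map into a single chart. The only cosmetic difference is that you state the local form of a gluing plot as an inclusion composed with a chart plot, whereas the paper allows the \emph{a priori} mixed form involving $g_{\alpha\beta}$, which here collapses to the same thing because the transition function realizes the identity on $M$.
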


\begin{proof}
We need to show that each plot of $\calD_s$ is a plot of $\calD_{gl}$, and \emph{vice versa}. It suffices to observe that every $f:U\to M$ such that $\varphi_{\alpha}\circ f$ and 
$\varphi_{\beta}\circ f:U\to\matR^n$ are smooth, is locally a smooth function into either $U_{\alpha}$ or $U_{\beta}$, for the standard diffeology on each of them. Now, since $U_{\alpha},U_{\beta}$ are
open sets, so is their intersection, which, together with the properties of usual smooth maps, implies that for every point $u\in U$ there is an open neighbourhood $U'\ni u$ such that $f(U')$ is entirely 
contained in either $U_{\alpha}$ or $U_{\beta}$. The restriction of $f$ to $U'$ is then indeed a smooth map, as desired, so $f$ is a plot of $\calD_{gl}$.\footnote{Note that $U_{\alpha}\cap U_{\beta}$ being 
open is essential here; in general, the reasoning would fail if it is not.} This allows us to conclude that $\calD_s\subseteq\calD_{gl}$.

Let us now prove the \emph{vice versa}. Observe that, since the gluing is along a diffeomorphism, and $U_{\alpha}$ and $U_{\beta}$ are open, up to switch between $\alpha$ and $\beta$, we can locally
describe each plot of the gluing diffeology as a smooth map into $U_{\alpha}$ or as a composition $g_{\alpha\beta}\circ f$ for some smooth $f$ into $U_{\alpha}\cap U_{\beta}$, which implies that $p$
is smooth in the usual sense as a map into $M$, and so is a plot of $\calD_s$. This gives the reverse inclusion $\calD_{gl}\subseteq\calD_s$, and the statement is proven.
\end{proof}

The reasoning in the above lemma easily extends to the case of a usual vector bundle over a smooth connected manifold relative to a finite atlas of local trivializations. 

\begin{lemma}\label{gluing:equals:standard:bundle:case2:lem}
Let $\pi:E\to M$ be a smooth vector bundle of rank $k$ that admits an atlas of local trivializations consisting of two charts only, \emph{i.e.}, $M=U_1\cup U_2$ with $\varphi_i:U_i\to\matR^n$ a diffeomorphism 
for each $i=1,2$, and $E=\pi^{-1}(U_1)\cup\pi^{-1}(U_2)$ with $\psi_i:\pi^{-1}(U_i)\to U_i\times\matR^k$ being a fibrewise diffeomorphism for $i=1,2$. Let $g_{12}:U_1\cap U_2\to\mbox{GL}_k(\matR)$ be the 
corresponding transition function. Let $\pi_i:\matR^{n+k}\to\matR^n$ be the projection of $\matR^{n+k}=\matR^n\times\matR^k$ onto its first factor $\matR^n$ given by
$$\pi_i=\varphi_i\circ\pi\circ\psi_i^{-1}\circ(\varphi_i^{-1}\times\mbox{Id}_{\matR^k})$$ and considered as a diffeological vector pseudo-bundle with respect to the standard diffeology. Let 
$Y=\varphi_1(U_1\cap U_2)$, and let $f:Y\to\matR^n$ be given by $f=\varphi_2\circ(\varphi_1|_{U_1\cap U_2})^{-1}$. Finally, let $\tilde{f}:\pi_1^{-1}(Y)\to\pi_2^{-1}(f(Y))$ act by
$\tilde{f}(y,v)=(f(y),g_{12}(\varphi_1^{-1}(y))v)$. Then $\pi_1\cup_{(\tilde{f},f)}\pi_2:\matR^{n+k}\cup_{\tilde{f}}\matR^{n+k}\to\matR^n\cup_f\matR^n$ is diffeomorphic as a diffeological vector pseudo-bundle to 
the bundle $\pi:E\to M$.
\end{lemma}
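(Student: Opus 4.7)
The plan is to construct an explicit pseudo-bundle diffeomorphism $\Phi: E \to \matR^{n+k}\cup_{\tilde{f}}\matR^{n+k}$ covering a diffeomorphism $\phi: M \to \matR^n\cup_f\matR^n$, and then invoke (a slight adaptation of) the preceding Lemma \ref{gluing:equals:standard:case2:lem} to see that both $\phi$ and $\Phi$ are smooth in both directions. First I would note that the data of the atlas and trivializations give canonical set-theoretic bijections
\[
  \phi: M \to (\matR^n\sqcup\matR^n)/\sim \ =\ \matR^n\cup_f\matR^n,
  \qquad
  \Phi: E \to (\matR^{n+k}\sqcup\matR^{n+k})/\sim \ =\ \matR^{n+k}\cup_{\tilde{f}}\matR^{n+k},
\]
where on $U_i$ the map $\phi$ is $\varphi_i$ and on $\pi^{-1}(U_i)$ the map $\Phi$ is $(\varphi_i^{-1}\times\mathrm{Id}_{\matR^k})^{-1}\circ\psi_i$; these are well-defined because $\tilde{f}$ was defined precisely so as to record the transition $g_{12}$ between the two trivializations. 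By construction $\Phi$ is fibrewise linear, and the identity
$\pi_1\cup_{(\tilde{f},f)}\pi_2 \circ \Phi = \phi \circ \pi$
holds on each of the two patches, so the pair $(\Phi,\phi)$ is a fibrewise linear bundle morphism.

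Next I would establish that $\phi$ is a diffeomorphism. This is exactly Lemma \ref{gluing:equals:standard:case2:lem}: the standard manifold diffeology on $M$ and the gluing diffeology obtained from $(U_1\sqcup U_2)/\sim$ along $g_{\alpha\beta}$ (which in the present notation is $f=\varphi_2\circ\varphi_1^{-1}$) coincide, and $\phi$ is a tautological identification for these two viewpoints.

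The crucial step is then to repeat the same argument one level up, for $\Phi$. A smooth map $h:U\to E$ (for the manifold diffeology on $E$) has the property that $\psi_i\circ h$ is smooth on $h^{-1}(\pi^{-1}(U_i))$, and the two sets $\pi^{-1}(U_i)$ are open in $E$ (they are pullbacks under $\pi$ of the open sets $U_i$), so exactly as in the proof of Lemma \ref{gluing:equals:standard:case2:lem} every $u\in U$ has a neighbourhood on which $h$ factors through one of the two trivialized patches; thus $h$ is a plot of the gluing diffeology induced by $\tilde{f}$, and conversely, since $\tilde{f}$ is standard-smooth (it is given by the smooth map $g_{12}$ acting linearly on the fibre), every local lift to one patch is recovered from any other via a standard-smooth transformation, so every plot of the gluing diffeology is standard-smooth into $E$. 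This gives the equality of the two diffeologies on $E$, and hence both-way smoothness of $\Phi$.

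I expect the main obstacle to be purely bookkeeping: keeping track of the chain of identifications $\pi^{-1}(U_i) \leftrightarrow U_i\times\matR^k \leftrightarrow \matR^n\times\matR^k$ and verifying that the two possible representatives of a point in the gluing are genuinely identified under $\tilde{f}$ (this amounts to the cocycle-like identity $\tilde{f}(\varphi_1(x),v)=(\varphi_2(x),g_{12}(x)v)$, which is immediate from the definition of $\tilde{f}$). Once this is in place, there is no additional analytic content beyond what already appears in Lemma \ref{gluing:equals:standard:case2:lem}; the proof is its two-level version, since both $M$ and $E$ are smooth manifolds whose manifold atlases are literally the data of the gluing.
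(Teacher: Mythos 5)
Your proposal is correct and follows essentially the same route as the paper: the paper likewise observes that $\{(\pi^{-1}(U_i),(\varphi_i\times\mathrm{Id}_{\matR^k})\circ\psi_i)\}_{i=1,2}$ is a two-chart atlas on $E$, applies Lemma \ref{gluing:equals:standard:case2:lem} once to the base and once to the total space, and notes that the projections commute with the resulting identifications by construction. Your spelling out of why the argument runs one level up (openness of the $\pi^{-1}(U_i)$, standard smoothness of $\tilde{f}$ via $g_{12}$) is exactly the content the paper leaves as a parenthetical remark.
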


\begin{proof}
Observe first of all that $(\tilde{f},f)$ does define a diffeological gluing of pseudo-bundles; this is an easy consequence of the definitions of the two maps, and of the relevant assumptions. Furthermore, the 
existence of the obvious diffeomorphism between $\matR^n\cup_f\matR^n$ and $M$ follows from Lemma \ref{gluing:equals:standard:case2:lem}. The same Lemma can also be applied to the gluing along 
$\tilde{f}$ between the two copies of $\matR^{n+k}$, allowing us to conclude that $E$ is diffeomorphic to $\matR^{n+k}\cup_{\tilde{f}}\matR^{n+k}$ (recall indeed that in the definition of gluing of two 
diffeological pseudo-bundles the gluing of the total spaces is still the usual gluing of them as diffeological spaces, and, on the other hand,
$\{(\pi^{-1}(U_1),(\varphi_1\times\mbox{Id}_{\matR^k})\circ\psi_1),(\pi^{-1}(U_2),(\varphi_2\times\mbox{Id}_{\matR^k})\circ\psi_2)\}$ is trivially a finite atlas on $E$). Finally, $\pi_1\cup_{(\tilde{f},f)}\pi_2$ and 
$\pi$ commute with these two diffeomorphisms by construction, so the statement is proven.
\end{proof}

A more general statement can then be easily obtained.

\begin{thm}\label{smooth:bundle:equals:diffeol:bundle:thm}
Let $\pi:E\to M$ be a smooth vector bundle of rank $k$ over an $n$-dimensional manifold $M$ that admits a finite atlas of $m$ local trivializations. Then $\pi$ is the result of gluing of $m$ diffeological 
vector pseudo-bundles $\pi_i:\matR^{n+k}\to\matR^n$, where
$$\pi_i=\varphi_i\circ\pi\circ\psi_i^{-1}\circ(\varphi_i^{-1}\times\mbox{Id}_{\matR^k})\mbox{ for }i=1,\ldots,m,$$ and the gluing $(\tilde{f}_{ij},f_{ij})$ between $\pi_i$ and $\pi_j$ is given by the maps
$$f_{ij}=\varphi_j\circ(\varphi_i|_{U_i\cap U_j})^{-1},\mbox{ and}$$ 
$$\tilde{f}_{ij}(y,v)=(f_{ij}(y),g_{ij}(\varphi_i^{-1}(y))v),\mbox{ for }y\in U_i\cap U_j\mbox{ and }v\in\pi_i^{-1}(y).$$
\end{thm}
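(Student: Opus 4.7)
The plan is to prove the theorem by induction on $m$, the number of charts in the atlas, with the base case $m=2$ being exactly Lemma \ref{gluing:equals:standard:bundle:case2:lem}. Before proceeding, I need to fix what the phrase ``result of gluing of $m$ diffeological vector pseudo-bundles'' means, since the paper so far has only defined the binary gluing operation. The natural interpretation is to define it as the quotient of the disjoint union $\bigsqcup_{i=1}^m \matR^{n+k}$ by the equivalence relation generated by all pairwise identifications $(y,v) \sim \tilde{f}_{ij}(y,v)$, endowed with the quotient diffeology; equivalently, it can be constructed by iterating the binary gluing operation. The cocycle condition $g_{ij}g_{jk} = g_{ik}$ on the transition functions of $\pi:E\to M$ guarantees that the generated equivalence relation is consistent on triple overlaps $U_i \cap U_j \cap U_k$, and that the outcome of iterated binary gluings is independent of the order of iteration, up to pseudo-bundle diffeomorphism.

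For the inductive step, suppose the theorem holds for bundles admitting an atlas of $m-1$ local trivializations, and let $\pi:E\to M$ be a bundle with an atlas $\{(U_i,\varphi_i,\psi_i)\}_{i=1}^m$. Set $M' = U_1 \cup \ldots \cup U_{m-1}$ and $E' = \pi^{-1}(M')$. The restriction $\pi|_{E'}:E' \to M'$ is a smooth vector bundle that admits an atlas of $m-1$ trivializations, so by the inductive hypothesis it is diffeomorphic, as a diffeological vector pseudo-bundle with respect to its standard diffeology, to the result of gluing $\pi_1,\ldots,\pi_{m-1}$ along the pairs $(\tilde{f}_{ij},f_{ij})$ with $i,j<m$.

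Next I would apply a mild generalization of the two-chart argument (as in Lemma \ref{gluing:equals:standard:case2:lem} and Lemma \ref{gluing:equals:standard:bundle:case2:lem}) to the open cover $\{M',U_m\}$ of $M$ and its lift $\{E',\pi^{-1}(U_m)\}$ of $E$. The proof of Lemma \ref{gluing:equals:standard:case2:lem} is purely a sheaf-theoretic statement about open covers: it only uses the openness of the two pieces, the fact that the overlap is open, and the local characterization of plots. It therefore extends verbatim to the present situation where the first ``chart'' is the open set $M'$ (now known, by induction, to carry exactly the gluing diffeology of $\pi_1,\ldots,\pi_{m-1}$) rather than a single Euclidean chart. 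Gluing $\pi|_{E'}$ with $\pi_m$ along the disjoint union of overlap pieces $\bigcup_{i<m}(U_i\cap U_m)$ and the corresponding lifts $\tilde{f}_{im}$ thus produces a pseudo-bundle diffeomorphic to $\pi$, and by construction this coincides with the iterated $m$-fold gluing of $\pi_1,\ldots,\pi_m$ described in the statement.

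The main obstacle is justifying that the iterated binary gluings produce a well-defined $m$-fold object and that the inductive splitting into $M'$ and $U_m$ is legitimate. Both hinge on the cocycle identity $g_{ij}g_{jk} = g_{ik}$: it ensures that on a triple overlap $U_i\cap U_j\cap U_k$ the three pairwise identifications are mutually consistent, so that the quotient is independent of the order in which the pairwise gluings are carried out, and so that the gluing of $E'$ with $\pi_m^{-1}(\varphi_m(U_m))$ over the disconnected set $\bigcup_{i<m}\varphi_m(U_i\cap U_m)$ recovers the same total diffeology as performing all $m(m-1)/2$ pairwise gluings at once. A minor secondary point is the generalization of Lemma \ref{gluing:equals:standard:bundle:case2:lem} mentioned above; this is essentially routine because its proof is local and relies only on openness and the sheaf condition for plots, both of which survive when $M'$ is an arbitrary open subset of $M$.
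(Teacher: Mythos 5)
Your proposal is correct in substance, and in fact it supplies an argument where the paper gives none: after Lemma \ref{gluing:equals:standard:bundle:case2:lem} the paper simply states that "a more general statement can then be easily obtained" and records Theorem \ref{smooth:bundle:equals:diffeol:bundle:thm} without proof. Your induction on the number of charts, with Lemma \ref{gluing:equals:standard:bundle:case2:lem} as the base case and with the open decomposition $M=M'\cup U_m$, $M'=U_1\cup\ldots\cup U_{m-1}$, driving the inductive step, is the natural way to make the omitted argument precise, and each step you describe does go through: the restriction $\pi|_{E'}$ is a bundle over the open manifold $M'$ with an atlas of $m-1$ trivializations, so the inductive hypothesis applies; the proof of Lemma \ref{gluing:equals:standard:case2:lem} really does use only openness of the two pieces and of their overlap together with the local characterization of plots, so it transfers to the cover $\{M',U_m\}$ once one knows (by induction) that the standard diffeology of $M'$ coincides with the $(m-1)$-fold gluing diffeology. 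The one place where your write-up is thinner than it should be is the point you yourself flag: the statement speaks of "the result of gluing of $m$ pseudo-bundles," an operation the paper never defines beyond the binary case, and you must both fix a definition (quotient of $\bigsqcup_i\matR^{n+k}$ by the relation generated by the $\tilde{f}_{ij}$) and verify that your iterated construction produces it. The cocycle identity $g_{ij}g_{jk}=g_{ik}$ is indeed exactly what is needed --- it guarantees that the generated equivalence relation restricted to the first $m-1$ pieces is the one generated by the identifications among those pieces alone, that the piecewise-defined gluing map $E'\supset\pi^{-1}(M'\cap U_m)\to\pi_m^{-1}(\varphi_m(M'\cap U_m))$ is single-valued on triple overlaps, and hence that the order of the binary gluings is immaterial up to pseudo-bundle diffeomorphism --- but this verification is asserted rather than carried out, and since the paper is equally silent on it, writing out even two or three lines of it would strengthen the argument. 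With that caveat, the proposal is a sound completion of a proof the paper leaves to the reader.
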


\subsection{The usual dual and the diffeological dual}

Since duality works differently in diffeology with respect to the standard case, and its behavior with respect to the gluing is not straightforward, we are particularly interested in comparing the diffeological dual 
with the standard one. Let $\pi:E\to M$ be a smooth vector bundle of rank $k$ over an $n$-dimensional smooth manifold $M$, admitting a finite atlas of local trivializations. It is automatically a diffeological 
vector pseudo-bundle for the standard diffeologies on $E$ and $M$. In this section we use the diffeological gluing representation of the smooth bundle $\pi$ given by Theorem 
\ref{smooth:bundle:equals:diffeol:bundle:thm}, as well as our earlier observations regarding the behavior of gluing with respect to vector (pseudo-)bundles operations, to show that the usual dual bundle of 
$\pi:E\to M$ coincides with its diffeological dual pseudo-bundle (meaning that there exists a diffeological diffeomorphism between them).

\paragraph{Diffeological dual bundles of local trivializations} Let us consider one of the elementary bundles $\pi_i:\matR^{n+k}\to\matR^n$. If we consider it as a diffeological vector pseudo-bundle, with 
respect to the standard diffeologies on $\matR^{n+k}$ and $\matR^n$, by Theorem 5.3.5 of \cite{vincent} its diffeological dual is a trivial bundle with the fibre $(\matR^k)^*$; in particular, it does coincide with 
its usual dual.

\paragraph{Commutativity of gluing with dual} Let us now consider the question of whether the gluing of two such elementary bundles, say $\pi_i$ and $\pi_j$, along the maps $f_{ij}$ and $\tilde{f}_{ij}$ (see 
Theorem \ref{smooth:bundle:equals:diffeol:bundle:thm} for the definitions of these two maps), commutes with taking duals, that is, whether the pseudo-bundles
$$(\pi_i\cup_{(\tilde{f}_{ij},f_{ij})}\pi_j)^*:(\matR^{n+k}\cup_{\tilde{f}_{ij}}\matR^{n+k})^*\to\matR^n\cup_{f_{ij}}\matR^n,\mbox{ and}$$
$$\pi_j^*\cup_{(\tilde{f}_{ij}^*,f_{ij}^{-1})}\pi_i^*:(\matR^{n+k})^*\cup_{\tilde{f}_{ij}^*}(\matR^{n+k})^*\to\matR^n\cup_{f_{ij}^{-1}}\matR^n$$ are diffeomorphic. Since we are dealing solely with diffeomorphisms
and standard diffeologies, we certainly expect them to be so, but let us give a formal proof.

To prove our claim, let us denote by $i_1:\matR^n\to\matR^n\cup_{f_{ij}}\matR^n$ the inclusion of the first copy of $\matR^n$ into the result of gluing, and by $i_2:\matR^n\to\matR^n\cup_{f_{ij}}\matR^n$ the 
inclusion of the second copy. Let $j_1$ and $j_2$ be the analogously defined inclusions for the space $\matR^n\cup_{f_{ij}^{-1}}\matR^n$. Note that all four maps are actually \emph{inductions} (\emph{i.e.}, 
each of them is a diffeomorphism with its image). Besides, there is an obvious (and unique) diffeomorphism $\varphi:\matR^n\cup_{f_{ij}}\matR^n\to\matR^n\cup_{f_{ij}^{-1}}\matR^n$, with the property that 
$\varphi\circ i_1=\varphi^{-1}\circ j_2$ and \emph{vice versa} (that is, $\varphi\circ i_2=\varphi^{-1}\circ j_1$). We call this $\varphi$ the \textbf{switch map} (since it does indeed exchange the copies of 
$\matR^n$ being glued together).

\begin{prop}\label{diffeological:dual:is:standard:dual:prop}
Let $\varphi:\matR^n\cup_{f_{ij}}\matR^n\to\matR^n\cup_{f_{ij}^{-1}}\matR^n$ be the switch map between the two copies of $\matR^n$. Then there exists a lift
$\Phi:(\matR^{n+k}\cup_{\tilde{f}_{ij}}\matR^{n+k})^*\to(\matR^{n+k})^*\cup_{\tilde{f}_{ij}^*}(\matR^{n+k})^*$ which is a diffeomorphism.
\end{prop}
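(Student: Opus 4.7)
The plan is to construct $\Phi$ as a fibrewise linear bijection covering the switch map $\varphi$, and then verify smoothness in both directions via the plot characterization of the dual pseudo-bundle diffeology (\cite{vincent}, Proposition 5.3.2) recalled earlier in the paper.

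First I would identify the fibres on both sides. A point $z \in \matR^n \cup_{f_{ij}} \matR^n$ falls into one of three cases: (a) $z = i_1(y)$ with $y \in \matR^n \setminus Y$; (b) $z = i_2(y')$ with $y' \in \matR^n \setminus f_{ij}(Y)$; (c) $z$ lies in the domain of gluing, so $z = i_1(y) = i_2(f_{ij}(y))$ for some $y \in Y$. In (a) and (b) the primal fibre is $\pi_i^{-1}(y) \cong \matR^k$, resp.\ $\pi_j^{-1}(y') \cong \matR^k$, with dual canonically $(\matR^k)^*$; the fibre of the RHS pseudo-bundle at $\varphi(z)$ is, by the same reasoning, the same $(\matR^k)^*$, and I would set $\Phi$ to be the identity. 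In (c) the primal fibre is the target $\pi_j^{-1}(f_{ij}(y)) = \matR^k$ of $\tilde{f}_{ij}$, with dual $\pi_j^{-1}(f_{ij}(y))^* = (\matR^k)^*$, while the RHS fibre at $\varphi(z)$ is the target $(\pi_i^{-1}(y))^* = (\matR^k)^*$ of $\tilde{f}_{ij}^*$; I would let $\Phi$ restricted to this fibre be precisely $\tilde{f}_{ij}^*$, which is a linear isomorphism because $\tilde{f}_{ij}$ acts on each fibre by the invertible matrix $g_{ij}(\varphi_i^{-1}(y)) \in \mbox{GL}_k(\matR)$.

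For smoothness, outside the domain of gluing $\Phi$ is essentially the identity between trivial dual bundles, each of which agrees with its usual dual by \cite{vincent}, Theorem 5.3.5, so nothing needs to be checked there. For plots whose range meets the gluing domain, I would work from the plot characterization of the dual: a candidate plot $\Phi \circ p$ of the RHS is tested by pairing it against plots of $\matR^{n+k}\cup_{\tilde{f}_{ij}}\matR^{n+k}$ that locally lift to one of the two factors of the gluing, and by the defining adjoint relation $\tilde{f}_{ij}^*(\ell)(v) = \ell(\tilde{f}_{ij}(v))$ this pairing translates into a smooth pairing on the LHS side. The same characterization applied in reverse handles the smoothness of $\Phi^{-1}$, since $\tilde{f}_{ij}^*$ is an isomorphism with smooth inverse (its matrix is the transpose of $g_{ij}(\varphi_i^{-1}(y))$, depending smoothly on $y$ on $U_i \cap U_j$).

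The main obstacle is the careful bookkeeping in case (c): one must reconcile two \emph{different} presentations of the same $(\matR^k)^*$, one as the dual of the target fibre of the primal gluing, the other as the target dual of the gluing of duals, and check that the identification provided by $\tilde{f}_{ij}^*$ intertwines the two gluing diffeologies coherently with plot pairings. Once this coherence is in place, the global smoothness of $\Phi$ and $\Phi^{-1}$ reduces to the elementary, trivial-bundle case, and the statement follows.
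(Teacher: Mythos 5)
Your proposal is correct and follows essentially the same route as the paper: the same fibrewise identification (identity off the gluing locus, $\tilde{f}_{ij}^*$ over it, which is exactly the ``obvious'' bijection the paper describes), followed by a smoothness check that exploits the plot characterization of the dual diffeology on $(\matR^{n+k}\cup_{\tilde{f}_{ij}}\matR^{n+k})^*$ and the fact that the gluing is along a diffeomorphism of open subsets. The only point to tidy up is that the pairing characterization governs the \emph{source} $(\matR^{n+k}\cup_{\tilde{f}_{ij}}\matR^{n+k})^*$, whereas the target $(\matR^{n+k})^*\cup_{\tilde{f}_{ij}^*}(\matR^{n+k})^*$ carries the gluing diffeology, which the paper identifies with the standard one via Lemma \ref{gluing:equals:standard:case2:lem}; with that distinction made explicit your argument matches the paper's.
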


\begin{proof}
The definition of $\Phi$ is rather obvious; examining the fibres over any arbitrary $x\in\matR^n\cup_{f_{ij}}\matR^n$ and $\varphi(x)\in\matR^n\cup_{f_{ij}^{-1}}\matR^n$, we notice that they are diffeomorphic 
(essentially, they are identical). Hence so are their duals (individually), which gives an obvious bijection, $(\matR^{n+k}\cup_{\tilde{f}_{ij}}\matR^{n+k})^*\to(\matR^{n+k})^*\cup_{\tilde{f}_{ij}^*}(\matR^{n+k})^*$,
that uses the canonical isomorphism between $\matR^k$ and its dual space and that we denote $\Phi$. It remains to see that $\Phi$ is smooth with smooth inverse, so we actually spell out its (local) form 
in detail.

Let us denote for brevity $V_{\cup,*}:=(\matR^{n+k}\cup_{\tilde{f}_{ij}}\matR^{n+k})^*$ and $V_{*,\cup}:=(\matR^{n+k})^*\cup_{\tilde{f}_{ij}^*}(\matR^{n+k})^*$. Since $\tilde{f}_{ij}^*$ is also a diffeomorphism, 
by Lemma \ref{gluing:equals:standard:case2:lem} the gluing diffeology on $V_{*,\cup}$ coincides with the standard one, while, using the same Lemma, the diffeology on $V_{\cup,*}$ is the dual (in the sense 
of pseudo-bundles for now) diffeology of the standard diffeology on $\matR^{n+k}\cup_{\tilde{f}_{ij}}\matR^{n+k}$. A plot $p:U\to V_{\cup,*}$ of the diffeology on $V_{\cup,*}$ is characterized by the property 
that the evaluation $p(u)(f(u'))$ of $p$ on any smooth function $f:U'\to\matR^{n+k}$ (due to the fact that the two copies of $\matR^{n+k}$ are being glued along the open subsets 
$\varphi(U_i\cap U_j)\times\matR^k$ and $\varphi_j(U_i\cap U_j)\times\matR^k$, the local form of any plot of $\matR^{n+k}\cup_{\tilde{f}_{ij}}\matR^{n+k}$ is such an $f$) is smooth for the subset diffeology 
(relative to the standard diffeology on the domain $U\times U'$) on the set of pairs $(u,u')$ such that $(\pi_i\cup_{(\tilde{f}_{ij},f_{ij})}\pi_j)^*(u)=(\pi_i\cup_{(\tilde{f}_{ij},f_{ij})}\pi_j)(u')$. This implies that we 
can always choose $U$ small enough so that $(\pi_i\cup_{(\tilde{f}_{ij},f_{ij})}\pi_j)(p(U))$ be wholly contained in one of the copies of $\matR^n\subset\matR^n\cup_{f_{ij}}\matR^n$.

Furthermore, this implies that the plot $p$ can be identified with either a plot $p_1$ of the pre-image $((\pi_i\cup_{(\tilde{f}_{ij},f_{ij})}\pi_j)^*)^{-1}(\matR^n)\subset V_{\cup,*}$ of the first copy of $\matR^n$, or a 
plot $p_2$ of the pre-image of the second copy of $\matR^n$. Thus, $p(u)(f(u'))$ coincides with either $p_1(u)(f(u'))$ or $p_2(u)(f(u'))$. Furthermore, on $\varphi_i(U_i\cap U_j)$ (the domain of definition of 
$f_{ij}$) we have $p_1(u)=p_2(u)\circ f_{ij}$, \emph{i.e.} $p_1(u)=f_{ij}^*(p_2(u))$. Finally, observe that $\Phi\circ p$ is either $p_1$ or $p_2$ considered as element(s) of $V_{*,\cup}$, so a plot of the latter. 
Since smoothness is a local property, we obtain the $\Phi$ is indeed a smooth map. Finally, the smoothness of the inverse map $\Phi^{-1}$ is established in exactly the same way, so we conclude that $\Phi$ 
is a diffeomorphism, as wanted.
\end{proof}

We emphasize that the main reason why this statement, that is not true in general, holds in the present case is that the gluing is performed along a diffeomorphism between two \emph{open} subsets
(both in the bases and in the total spaces).

\paragraph{The general statement} We can now use the representation of the bundle $\pi:E\to M$ as the result of gluing of the elementary standard bundles $\pi_i:\matR^{n+k}\to\matR^n$ given by Theorem
\ref{smooth:bundle:equals:diffeol:bundle:thm}, to obtain the following claim.

\begin{thm}\label{dual:diffeo:equals:dual:standard:thm}
Let $\pi:E\to M$ be a smooth vector bundle over a smooth finite-dimensional manifold $M$, that admits a finite atlas of local trivializations. Then the diffeological dual pseudo-bundle of $\pi:E\to M$ endowed 
with the standard diffeological structure is diffeomorphic to its usual dual bundle, via the identity on $M$.
\end{thm}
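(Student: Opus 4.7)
The plan is to prove the theorem by induction on the number $m$ of local trivializations in the finite atlas, using the gluing representation of $\pi:E\to M$ provided by Theorem \ref{smooth:bundle:equals:diffeol:bundle:thm} as the structural backbone, and invoking Proposition \ref{diffeological:dual:is:standard:dual:prop} at each inductive step. The guiding observation is that both the base space $M$ and the total space $E$ decompose into diffeological gluings along \emph{open} sets via diffeomorphisms, and this openness is exactly what makes the gluing-dual commutativity work (as emphasized in the comment following Proposition \ref{diffeological:dual:is:standard:dual:prop}).

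For the base case $m=1$, the bundle $\pi:E\to M$ is trivial, $E\cong U_1\times\matR^k$, and by Theorem 5.3.5 of \cite{vincent} its diffeological dual is the trivial bundle with fibre $(\matR^k)^*$, which coincides with its usual dual bundle. For the inductive step, assume the result for any smooth vector bundle admitting an atlas of $m-1$ local trivializations. Given $\pi:E\to M$ with an atlas of $m$ charts $\{(U_i,\varphi_i,\psi_i)\}_{i=1}^m$, consider the sub-bundle $\pi':E'\to M'$ obtained by restricting to $M'=U_1\cup\ldots\cup U_{m-1}$. By Theorem \ref{smooth:bundle:equals:diffeol:bundle:thm}, $E$ is the diffeological gluing of $E'$ with a copy of $\matR^{n+k}$ (corresponding to the chart $U_m$), along the map $f_{m-1,m}$ (on bases) and its lift $\tilde f_{m-1,m}$ (on total spaces) built from the transition function $g_{m-1,m}$; both of these maps are diffeomorphisms defined on open subsets. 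By the inductive hypothesis, the diffeological dual of $\pi'$ is the same as its usual dual.

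Now apply the argument of Proposition \ref{diffeological:dual:is:standard:dual:prop} to the gluing of $E'$ with the $m$-th piece. The proof there proceeds by describing, for a plot of the diffeological dual of the glued bundle, a locally defined restriction that factors through one of the two sides; the only properties of the two sides that are used are that the gluing is along diffeomorphisms between open subsets. These properties hold verbatim for the gluing of $E'$ with the $m$-th trivial piece, so the same fibrewise-canonical bijection $\Phi$ (using the standard isomorphism $\matR^k\cong(\matR^k)^*$ on each fibre) defines a diffeomorphism between the diffeological dual $(E'\cup_{\tilde f_{m-1,m}}\matR^{n+k})^*$ and the gluing $(\matR^{n+k})^*\cup_{\tilde f_{m-1,m}^*}(E')^*$ of the individual duals. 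Combined with the inductive hypothesis identifying $(E')^*$ with the usual dual of $\pi'$, and with the base-case identification for the $m$-th piece, this yields the desired diffeomorphism between the diffeological dual of $\pi$ and its usual dual, covering the identity on $M$ by construction.

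The main obstacle I foresee is the passage from the two-chart statement of Proposition \ref{diffeological:dual:is:standard:dual:prop} to an iterated multi-chart gluing: one must verify that after performing $m-2$ gluings, the resulting pseudo-bundle still has all the openness and diffeomorphism properties needed to apply the same plot-factorization argument when gluing in the $m$-th chart. Concretely, the point to check is that the image inside $E'$ of the open gluing locus at step $m$ remains open for the (iterated) gluing diffeology and that the inductively constructed diffeomorphism $(E')^*\cong$ (usual dual of $\pi'$) restricts compatibly to this open locus, so that the pointwise canonical identification $\matR^k\cong(\matR^k)^*$ assembles into a smooth global diffeomorphism. Once this compatibility is in place, the conclusion is immediate, and the ``identity on $M$'' condition is automatic because $\Phi$ is defined fibrewise over each point of the base without modifying the base coordinate.
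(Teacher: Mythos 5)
Your proposal is correct and follows essentially the same route the paper intends: the paper states Theorem \ref{dual:diffeo:equals:dual:standard:thm} with no written proof beyond the remark that it follows from the gluing representation of Theorem \ref{smooth:bundle:equals:diffeol:bundle:thm} together with the two-chart commutativity of Proposition \ref{diffeological:dual:is:standard:dual:prop}, which is precisely the induction you carry out. Your only looseness is writing the $m$-th gluing as along $f_{m-1,m}$ when the gluing locus is really the full (still open) overlap $\left(U_1\cup\ldots\cup U_{m-1}\right)\cap U_m$ with transition data assembled from all the $g_{im}$, but this does not affect the argument, and you correctly identify and address the one genuine point the paper glosses over (persistence of the openness and standard-diffeology properties under iterated gluing).
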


\section{Smooth maps and gluing}

The following somewhat technical construction is frequently used when dealing with the gluing of diffeological spaces. It has to do with the situation where we glue together two diffeological spaces each of 
which acts as the domain of some smooth map. The range of the two maps could be the same diffeological space, or else there could be a simultaneously gluing of the ranges (such as when we are
dealing with the gluing of the pseudo-bundles and two respective sections of them). Now, under an expected condition of compatibility there is an induced map on the result, which can be called the
result of gluing of the two maps (on of which the pseudo-bundle map of form $\pi_1\cup_{(\tilde{f},f)}\pi_2$ is a specific instance).

\subsection{Local structure of plots of gluing diffeology}\label{natural:inductions:sect}

When it comes to considering the smoothness of various maps on diffeological spaces obtained by gluing, it is useful to consider the following technical properties of the gluing diffeology.

\begin{lemma}\label{describe:plots:gluing:diffeol:lem}
Let $X_1$ and $X_2$ be two diffeological spaces, and let $f:X_1\supset Y\to X_2$ be a smooth map. Then:\\
1) the obvious inclusions $i_1:X_1\setminus Y\hookrightarrow X_1\cup_f X_2$, where $X_1\setminus Y$ is considered with the subset diffeology relative to the diffeology of $X_1$, and
$i_2:X_2\hookrightarrow X_1\cup_f X_2$ are inductions;\footnote{A map $f:X\to Y$ between two diffeological spaces is called an induction if it is injective, and the diffeology of $X$ is the pullback, via $f$, of 
the diffeology of $Y$.}\\
2) every plot $p:U\to X_1\cup_f X_2$ locally has the following characterization: either there exists a plot $p_2:U\supset U'\to X_2$ of $X_2$ such that $p|_{U'}=i_2\circ p_2$, or there exists a plot 
$p_1:U\supset U'\to X_1$ of $X_1$ such that 
$$p(u')=\left\{\begin{array}{ll}i_1(p_1(u')) & \mbox{if }p_1(u')\in X_1\setminus Y,\\ i_2(f(p_1(u'))) & \mbox{if }p_1(u')\in Y.\end{array}\right.$$
\end{lemma}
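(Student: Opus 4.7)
The plan is to prove part (2) first and then derive part (1) from it. Both statements rest on the definition of the gluing diffeology as the pushforward of the sum diffeology on $X_1\sqcup X_2$ via the canonical projection $\pi:X_1\sqcup X_2\to X_1\cup_f X_2$, together with two elementary observations: (a) a plot of $X_1\sqcup X_2$ on a connected domain takes values entirely in one of the two summands (since $X_1$ and $X_2$ sit as disjoint D-open subsets of the disjoint union), and (b) the subsets $i_1(X_1\setminus Y)$ and $i_2(X_2)$ are disjoint and together cover $X_1\cup_f X_2$.

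For part (2): given a plot $p:U\to X_1\cup_f X_2$ and a point $u\in U$, the definition of the quotient (pushforward) diffeology supplies a connected open neighborhood $U'\ni u$ and a plot $s:U'\to X_1\sqcup X_2$ with $p|_{U'}=\pi\circ s$. By (a), $s$ is a plot of exactly one of $X_1$ and $X_2$. If $s=p_2$ is a plot of $X_2$, then $\pi\circ p_2=i_2\circ p_2$, giving the first alternative. If $s=p_1$ is a plot of $X_1$, then unwinding the equivalence relation yields $\pi(p_1(u'))=i_1(p_1(u'))$ when $p_1(u')\in X_1\setminus Y$ and $\pi(p_1(u'))=i_2(f(p_1(u')))$ when $p_1(u')\in Y$, which is exactly the second alternative.

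For part (1): both $i_1$ and $i_2$ are injective since the equivalence relation only identifies points of $Y$ with their images in $f(Y)$, and both are smooth, being compositions of the natural injection into the disjoint union with the smooth quotient map $\pi$. The nontrivial content is the pullback property: whenever $i_j\circ q$ is a plot of the quotient, $q$ should be a plot of the source diffeology. Apply part (2) locally to $i_j\circ q$; in each instance one of the two alternatives is excluded by (b). For $i_1$: the first alternative is forbidden (its image lies in $i_2(X_2)$), and in the second alternative the lift $p_1$ cannot hit $Y$ (else the image would again fall in $i_2(X_2)$), so $q$ is locally a plot of $X_1$ landing in $X_1\setminus Y$, hence a plot for the subset diffeology. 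For $i_2$: the first alternative gives $q|_{U'}=p_2$ directly.

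The delicate second alternative in the $i_2$ case is the principal obstacle. Here disjointness (b) forces the lift $p_1$ to take values entirely in $Y$, and then $q|_{U'}=f\circ p_1|_{U'}$ is a plot of $X_2$ by the hypothesized smoothness of $f$ on $Y$ with its subset diffeology. This is the only step where the smoothness of the gluing map $f$ is genuinely invoked, and it is the single point requiring real case analysis beyond disjointness and injectivity.
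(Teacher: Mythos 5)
Your proof is correct and follows essentially the same route as the paper's: everything is reduced to the pushforward description of the gluing diffeology, that is, to local lifting of plots to the disjoint union $X_1\sqcup X_2$, combined with the disjointness of $i_1(X_1\setminus Y)$ and $i_2(X_2)$. The one place where you are more careful than the published argument is the induction property of $i_2$. The paper disposes of it by asserting that the lift of $i_2\circ p_2$ to $X_1\sqcup X_2$ is unique and coincides with the inclusion $X_2\hookrightarrow X_1\sqcup X_2$; this is not literally true, since a plot with values in $i_2(f(Y))$ may instead lift to a plot of $X_1$ taking values in $Y$. You identify exactly this alternative, note that disjointness forces such a lift $p_1$ to stay entirely in $Y$, and then invoke the smoothness of $f$ for the subset diffeology on $Y$ to conclude that $q=f\circ p_1$ is a plot of $X_2$. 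This is the only step where the hypothesis on $f$ is genuinely used, and your version closes the small gap that the paper's uniqueness claim glosses over; the remaining steps (injectivity, smoothness of $i_1$ and $i_2$, and the local dichotomy of part (2)) coincide with the paper's reasoning.
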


\begin{proof}
This lemma is essentially a direct consequence of the definitions. The only somewhat subtle aspect to clarify is why $i_1$ and $i_2$ are inductions and not just smooth injective maps. We need to check
that $p_1:U\to X_1\setminus Y$ is a plot of $X_1\setminus Y$ if and only if $i_1\circ p_1$ is a plot of $X_1\cup_f X_2$, as well as the analogous statement for $i_2$. Recall first of all that the diffeology on 
$X_1\cup_f X_2$ is a pushforward of the diffeology on $X_1\sqcup X_2$, therefore $i_1\circ p_1$ is a plot of $X_1\cup_f X_2$ if and only if it locally lifts to a plot of $X_1\sqcup X_2$; this lift is unique in this 
case, so $i_1\circ p_1$ is a plot if and only if the composition of the obvious inclusions $(X_1\setminus Y)\hookrightarrow X_1\hookrightarrow (X_1\sqcup X_2)$ is a plot of $X_1\sqcup X_2$. The second of 
these inclusions is an induction by the definition of the disjoint union diffeology, and so is the first one, by definition of the subset diffeology. We conclude observing that the composition of two inductions is an 
induction itself.

Likewise, let $p_2:U\to X_2$ be a map. The composition $i_2\circ p_2$ is a plot of $X_1\cup_f X_2$ if and only if it lifts to a plot of $X_1\sqcup X_2$. Once again, the lift of this map is unique and coincides 
the inclusion $X_2\hookrightarrow(X_1\sqcup X_2)$, which is an induction. Thus, $i_2\circ p_2$ is a plot if and only if $p_2$ is a plot.
\end{proof}

\subsection{The compatibility condition}\label{f-g:compatibility:sect}

This condition is rather obvious; to state it rigorously, we need to distinguish between the following two cases.

\paragraph{The gluing of two $Z$-valued maps} Let $X_1,X_2,Z$ be diffeological spaces, let $f:X_1\supset Y\to X_2$ be a smooth map for the subset diffeology on $Y$, and let $\varphi_i:X_i\to Z$ be
two other smooth maps, with $i=1,2$. We say that $\varphi_1$ and $\varphi_2$ are \textbf{$f$-compatible} (or simply \textbf{compatible} when the map of reference is clear from the context) if the following is 
true for all $y\in Y$:
$$\varphi_1(y)=\varphi_2(f(y)).$$ Recall that this equality makes sense because both $\varphi_1$ and $\varphi_2$ take values in the same set $Z$ (which for this notion does not actually need to be 
diffeological space; it could be any set).

\paragraph{The gluing of two maps with different ranges} Let $X_1,X_2$ and $Z_1,Z_2$ be two pairs of diffeological spaces, and let $\psi_1:X_1\to Z_1$ and $\psi_2:X_2\to Z_2$ be two smooth maps.
Suppose, further, that we are given a smooth map $f:X_1\supset Y\to X_2$ and a smooth map $g:Y'\to Z_2$ defined on $Y'=\psi_1(Y)\subset Z_1$. We say that $\psi_1$ and $\psi_2$ are
\textbf{$(f,g)$-compatible} if for all $y\in Y$ we have  $$g(\psi_1(y))=\psi_2(f(y)).$$

\subsection{The gluing of $\varphi_i:X_i\to Z$ along $f:X_1\supset Y\to X_2$}\label{f-gluing:of:maps:sect}

Let us now describe the map on the glued space $X_1\cup_f X_2$ induced by two given $f$-compatible maps $\varphi_1:X_1\to Z$ and $\varphi_2:X_2\to Z$. The definition of this induced map, which we
denote by $\varphi_1\cup_f\varphi_2$ is the evident one. It is a map $\varphi_1\cup_f\varphi_2:X_1\cup_f X_2\to Z$ that acts by
$$(\varphi_1\cup_f\varphi_2)(x)=\left\{\begin{array}{ll} \varphi_1(i_1^{-1}(x)) & \mbox{for }x\in i_1(X_1\setminus Y)\\ \varphi_2(i_2^{-1}(x)) & \mbox{for }x\in i_2(X_2).\end{array}\right.$$

Since $i_1(X_1\setminus Y)$ and $i_2(X_2)$ cover $X_1\cup_f X_2$ and are disjoint (see Lemma \ref{describe:plots:gluing:diffeol:lem}), this map is well-defined. In fact, it is so even without the requirement 
of compatibility, which, on the other hand, is necessary to establish the following statement.

\begin{prop}
The map $\varphi_1\cup_f\varphi_2$ is smooth for the gluing diffeology on $X_1\cup_f X_2$ and the given diffeology of $Z$.
\end{prop}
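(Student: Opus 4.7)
The plan is to verify smoothness directly by testing on plots, using the local description of plots of the gluing diffeology provided by Lemma \ref{describe:plots:gluing:diffeol:lem}. Since smoothness is a local property, it suffices to show that for every plot $p:U\to X_1\cup_f X_2$ and every $u\in U$, there is a neighborhood $U'\ni u$ such that $(\varphi_1\cup_f\varphi_2)\circ p|_{U'}$ is a plot of $Z$.

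By Lemma \ref{describe:plots:gluing:diffeol:lem}, after possibly shrinking $U$, we are in one of exactly two cases. In the first case, $p|_{U'} = i_2\circ p_2$ for some plot $p_2:U'\to X_2$; then by the very definition of $\varphi_1\cup_f\varphi_2$ on $i_2(X_2)$, we have $(\varphi_1\cup_f\varphi_2)\circ p|_{U'} = \varphi_2\circ p_2$, which is a plot of $Z$ because $\varphi_2$ is smooth. In the second case, $p|_{U'}$ is determined by a plot $p_1:U'\to X_1$ via the formula in Lemma \ref{describe:plots:gluing:diffeol:lem}(2), and we must compute
$$(\varphi_1\cup_f\varphi_2)(p(u')) = \begin{cases} \varphi_1(p_1(u')) & \text{if } p_1(u')\in X_1\setminus Y, \\ \varphi_2(f(p_1(u'))) & \text{if } p_1(u')\in Y. \end{cases}$$
This is precisely where the $f$-compatibility condition $\varphi_1(y)=\varphi_2(f(y))$ for $y\in Y$ enters: it collapses both branches into the single expression $\varphi_1(p_1(u'))$, so $(\varphi_1\cup_f\varphi_2)\circ p|_{U'} = \varphi_1\circ p_1$, which is a plot of $Z$ by smoothness of $\varphi_1$.

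Combining the two cases, every point of $U$ has a neighborhood on which $(\varphi_1\cup_f\varphi_2)\circ p$ is a plot of $Z$, hence it is a plot globally by the sheaf condition. This establishes the smoothness claim.

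I do not anticipate a real obstacle: the content of the argument is the bookkeeping that Lemma \ref{describe:plots:gluing:diffeol:lem} reduces local smoothness checks to the two canonical local forms, together with the observation that compatibility is exactly the algebraic identity needed to unify the two sub-branches in the nontrivial case. If anything merits emphasis, it is that without the compatibility hypothesis the map $\varphi_1\cup_f\varphi_2$ would still be well defined as a set map (since $i_1(X_1\setminus Y)$ and $i_2(X_2)$ partition $X_1\cup_f X_2$), but the piecewise expression above would generally fail to assemble into a smooth function along the plots that cross from $X_1\setminus Y$ into $Y$.
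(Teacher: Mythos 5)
Your proof is correct and follows essentially the same route as the paper's: invoke Lemma \ref{describe:plots:gluing:diffeol:lem} to reduce to the two local forms of a plot, handle the $i_2\circ p_2$ case directly, and use $f$-compatibility to collapse the two branches of the $p_1$ case into $\varphi_1\circ p_1$. Your explicit appeal to the sheaf condition at the end is a small tidiness the paper leaves implicit, but the argument is the same.
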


\begin{proof}
Let $p:U\to X_1\cup_f X_2$ be a plot of $X_1\cup_f X_2$; we need to show that $(\varphi_1\cup_f\varphi_2)\circ p$ is a plot of $Z$. By Lemma \ref{describe:plots:gluing:diffeol:lem}, for every point 
$u\in U$ there is a small neighbourhood $U'\ni u$ such that the restriction of $p$ to $U'$ either has form $i_2\circ p_2$ for some plot $p_2:U'\to X_2$, or else there exists a plot $p_1$ of $X_1$ such that
$$p(u')=\left\{\begin{array}{ll} (i_1\circ p_1)(u') & \mbox{if }p_1(u')\notin Y,\mbox{ and} \\ (i_2\circ f\circ p_1)(u') & \mbox{if }p_1(u')\in Y, \end{array}\right.$$ for all $u'\in U'$. In the first case we have:
$$((\varphi_1\cup_f\varphi_2)\circ p)(u')=(\varphi_1\cup_f\varphi_2)(i_2(p_2(u')))=(\varphi_2\circ p_2)(u')\mbox{ for all }u'\in U'.$$ Since $\varphi_2$ is smooth by assumption, and $p_2$ is a plot of $X_2$,
the composition $\varphi_2\circ p_2$ is a plot of $Z$, as we wanted.

Let us now consider the second case. We have:
$$((\varphi_1\cup_f\varphi_2)\circ p)(u')=\left\{\begin{array}{ll} \varphi_1(p_1(u')) & \mbox{if }p_1(u')\notin Y,\mbox{ and} \\ \varphi_2(f(p_1(u')))=\varphi_1(p_1(u')) & \mbox{if }p_1(u')\in Y, \end{array}\right.$$
where the equality $\varphi_2(f(p_1(u')))=\varphi_1(p_1(u'))$ follows from the compatibility condition. Thus, we obtain that $((\varphi_1\cup_f\varphi_2)\circ p)(u')=(\varphi_1\circ p_1)(u')$ for all $u'\in U'$; 
since $\varphi_1$ is smooth by assumption and $p_1$ is a plot of $X_1$, we conclude that $(\varphi_1\cup_f\varphi_2)\circ p=\varphi_1\circ p_1$ is also a plot of $Z$, so we obtain the final statement.
\end{proof}

We thus have an assignment $(\varphi_1,\varphi_2)\mapsto\varphi_1\cup_f\varphi_2$, defined for every pair of $f$-compatible maps $\varphi_i:X_i\to Z$, and yielding a smooth map 
$\varphi_1\cup_f\varphi_2:X_1\cup_f X_2\to Z$. This assignment can be seen as a map 
$$\mathcal{F}_Z:C^{\infty}(X_1,Z)\times_{comp}C^{\infty}(X_2,Z)\to C^{\infty}(X_1\cup_f X_2,Z),$$ defined on the subset of the direct product $C^{\infty}(X_1,Z)\times C^{\infty}(X_2,Z)$ that consists of all pairs 
of $f$-compatible maps. Now, each of $C^{\infty}(X_1,Z)$, $C^{\infty}(X_2,Z)$, and $C^{\infty}(X_1\cup_f X_2,Z)$ has its standard functional diffeology, and the subset 
$C^{\infty}(X_1,Z)\times_{comp}C^{\infty}(X_2,Z)$ has the natural subset diffeology coming from the product diffeology on $C^{\infty}(X_1,Z)\times C^{\infty}(X_2,Z)$. With respect to all of these, we have the 
following statement.

\begin{thm}\label{f-compatible:gluing:smooth:thm}
The map $\mathcal{F}_Z:C^{\infty}(X_1,Z)\times_{comp}C^{\infty}(X_2,Z)\to C^{\infty}(X_1\cup_f X_2,Z)$ is smooth.
\end{thm}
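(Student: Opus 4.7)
My plan is to unwind the functional diffeology on the target: to show $\mathcal{F}_Z$ is smooth, it suffices to show that for every plot $p : U \to C^{\infty}(X_1,Z)\times_{comp}C^{\infty}(X_2,Z)$ the composition $\mathcal{F}_Z \circ p$ is a plot of $C^{\infty}(X_1\cup_f X_2,Z)$. By the very definition of the functional diffeology (as the coarsest one making evaluation smooth), this is equivalent to showing that the associated evaluation map
\[
E : U \times (X_1\cup_f X_2) \to Z, \qquad E(u,x) = \left(\mathcal{F}_Z(p(u))\right)(x) = (\varphi_1^{u}\cup_f \varphi_2^{u})(x),
\]
is smooth, where I write $(\varphi_1^u,\varphi_2^u)=p(u)$. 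Note that $p$, being a plot of a subset diffeology in a product, splits as a pair of plots $p_i : U \to C^{\infty}(X_i, Z)$ whose values are pointwise $f$-compatible.

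Next I would reduce smoothness of $E$ to a check on plots of its domain. Pick an arbitrary plot $(\alpha,\beta) : V \to U \times (X_1\cup_f X_2)$. By Lemma \ref{describe:plots:gluing:diffeol:lem}, around each point of $V$ the plot $\beta$ admits one of two local forms: either $\beta = i_2 \circ \beta_2$ for some plot $\beta_2 : V' \to X_2$, or $\beta$ comes from a plot $\beta_1 : V' \to X_1$ via the rule that $\beta(v') = i_1(\beta_1(v'))$ when $\beta_1(v')\notin Y$ and $\beta(v') = i_2(f(\beta_1(v')))$ when $\beta_1(v')\in Y$. In the first case, by definition of $\mathcal{F}_Z$, $E\circ(\alpha,\beta)|_{V'} = \mathrm{ev}_2 \circ (p_2\circ \alpha, \beta_2)$, where $\mathrm{ev}_2$ is the (smooth) evaluation $C^\infty(X_2,Z) \times X_2 \to Z$; this is the composition of smooth maps with a plot, hence a plot of $Z$. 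The second case is the one that requires care.

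In the second case, the defining formula for $\mathcal{F}_Z$ gives $E(\alpha(v'),\beta(v')) = \varphi_1^{\alpha(v')}(\beta_1(v'))$ whenever $\beta_1(v')\notin Y$, and $E(\alpha(v'),\beta(v')) = \varphi_2^{\alpha(v')}(f(\beta_1(v')))$ whenever $\beta_1(v')\in Y$. Here is exactly where the $f$-compatibility of the pair $p(\alpha(v'))=(\varphi_1^{\alpha(v')},\varphi_2^{\alpha(v')})$ is used: for $\beta_1(v')\in Y$ we have $\varphi_2^{\alpha(v')}(f(\beta_1(v'))) = \varphi_1^{\alpha(v')}(\beta_1(v'))$, so the two sub-cases collapse into the single formula $E\circ(\alpha,\beta)|_{V'} = \mathrm{ev}_1 \circ(p_1\circ\alpha, \beta_1)$, which is again a composition of smooth maps with a plot.

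The main obstacle is precisely this last point: without compatibility the formulas on the two pieces of $V'$ would in general disagree, so although a smooth glued map on $X_1\cup_f X_2$ always exists for each fixed $u$, one needs compatibility \emph{uniformly in $u$} (which is built into the subset $C^{\infty}(X_1,Z)\times_{comp}C^{\infty}(X_2,Z)$) in order to identify $E\circ(\alpha,\beta)$ with a single evaluation of a plot of $C^\infty(X_1,Z)$. Once this identification is made on each piece $V'$, smoothness is a local property and we conclude that $E$ is smooth on all of $U\times(X_1\cup_f X_2)$, hence $\mathcal{F}_Z\circ p$ is a plot, hence $\mathcal{F}_Z$ is smooth.
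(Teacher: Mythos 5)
Your proposal is correct and follows essentially the same route as the paper: both arguments split the plot $p$ as a pair $(p_1,p_2)$, invoke the local description of plots of the gluing diffeology (Lemma \ref{describe:plots:gluing:diffeol:lem}) for the plot of $X_1\cup_f X_2$ being evaluated against, and use $f$-compatibility to collapse the two sub-cases into a single evaluation of $p_1$ against a plot of $X_1$. Your rephrasing through the evaluation map $E$ on $U\times(X_1\cup_f X_2)$ is just an equivalent packaging of the paper's criterion that $(u,u')\mapsto(\mathcal{F}_Z\circ p)(u)(q(u'))$ be a plot of $Z$ for every plot $q$.
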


\begin{proof}
Let $p:U\to C^{\infty}(X_1,Z)\times_{comp}C^{\infty}(X_2,Z)$ be a plot; by definition of the subset diffeology, $p$ is then a plot of $C^{\infty}(X_1,Z)\times C^{\infty}(X_2,Z)$ such that its range is contained in 
$C^{\infty}(X_1,Z)\times_{comp}C^{\infty}(X_2,Z)$. By definition of the direct product diffeology, $p$ has form $p=(p_1,p_2)$, where each $p_i$ is a plot of $C^{\infty}(X_i,Z)$ for $i=1,2$.

Recall that, by the standard property of any functional diffeology, $p_i$ being a plot of $C^{\infty}(X_i,Z)$ means that for every plot $q_i:U'\to X_i$ of $X_i$ and $i=1,2$ the map $(u,u')\mapsto p_i(u)(q_i(u'))$ 
is a plot of $Z$. By the same property, to show that $\mathcal{F}_Z$ is smooth, we need to show that $\mathcal{F}_Z\circ p$ is a plot of $C^{\infty}(X_1\cup_f X_2,Z)$, and specifically, that for every plot 
$q:U'\to X_1\cup_f X_2$ of $X_1\cup_f X_2$ the induced map $(u,u')\mapsto(\mathcal{F}_Z\circ p)(u)(q(u'))$ is a plot of $Z$.

Let $q$ be an arbitrary plot of $X_1\cup_f X_2$; let $u$ be a point in the domain of the definition of $q$, and let $U'$ be a small enough neighborhood of $u$ in this domain such that $q$ lifts to either a plot 
$q_1$ of $X_1$ or a plot $q_2$ of $X_2$. Suppose first that $q$ lifts to a plot of $X_2$, \emph{i.e.}, $q=i_2\circ q_2$ on $U'$. Then by construction
$$(\mathcal{F}_Z\circ p)(u)(q(u'))=p_2(u)(q_2(u'))$$ on $U'$; therefore it is a plot of $Z$ by the assumptions on $p_2$ and $q_2$.

Suppose now that $q$ lifts to a plot $q_1$ of $X_1$. This means that on $U'$ the plot $q$ is defined as follows:
$$q(u')=\left\{\begin{array}{ll} i_1(q_1(u')) & \mbox{for }u'\in U' \mbox{ such that }q_1(u')\in X_1\setminus Y,\mbox{ and}\\
i_2(f(q_1(u'))) & \mbox{for }u'\in U'\mbox{ such that }q_1(u')\in Y.\end{array}\right.$$ In this case we have 
$$(\mathcal{F}_Z\circ p)(u)(q(u'))=\left\{\begin{array}{ll} p_1(u)(q_1(u')) & \mbox{for }u'\in U'\mbox{ such that }q_1(u')\in X_1\setminus Y,\mbox{ and}\\
p_2(u)(f(q_1(u')))=p_1(u)(q_1(u')) & \mbox{for }u'\in U'\mbox{ such that }q_1(u')\in Y,\end{array}\right.$$ where the equality in the second line is by the compatibility condition. This allows to conclude that on 
$U\times U'$ the map $\mathcal{F}_Z\circ p$ coincides with the evaluation of $p_1$ at $q_1$, which is a plot of $Z$ by the assumptions on $p_1$ and $q_1$, therefore the conclusion.
\end{proof}

\subsection{The gluing of $\psi_i:X_i\to Z_i$ along $f:X_1\supset Y\to X_2$ and $g:Z_1\supset Y'\to Z_2$}\label{f-g:gluing:of:maps:sect}

Let us now consider the case of a simultaneous gluing on the domains and the ranges of two given smooth maps $\psi_1:X_1\to Z_1$ and $\psi_2:X_2\to Z_2$ between diffeological spaces. On the domains 
the gluing is given by the map $f:X_1\supset Y\to X_2$, and on the ranges it is given by $g:Z_1\supset\psi_1(Y)\to Z_2$; these maps give rise to two new diffeological spaces, $X_1\cup_f X_2$ and
$Z_1\cup_g Z_2$. Denote, as before, by $i_1$ and $i_2$, for $k=1,2$, the inductions $i_1:X_1\setminus Y\hookrightarrow X_1\cup_f X_2$ and $i_2:X_2\hookrightarrow X_1\cup_f X_2$ (these are the same 
$i_1$ and $i_2$ that are defined in Section 4.1), and by $j_1$ and $j_2$ the analogous inductions for $Z_1$, $Z_2$, and $g$, that is, $j_1:Z_1\setminus\psi_1(Y)\hookrightarrow Z_1\cup_g Z_2$ and
$j_2:Z_2\hookrightarrow Z_1\cup_g Z_2$.

Assuming that $\psi_1$ and $\psi_2$ are $(f,g)$-compatible, we can define the following map $\psi_1\cup_{(f,g)}\psi_2:X_1\cup_f X_2\to Z_1\cup_g Z_2$ between these new spaces:\footnote{As before, the 
map itself can be defined without the assumption of the compatibility, which is needed to guarantee its smoothness.}
$$(\psi_1\cup_{(f,g)}\psi_2)(x)=\left\{\begin{array}{ll} j_1(\psi_1(i_1^{-1}(x))) & \mbox{for }x\in i_1(X_1\setminus Y)\\
j_2(\psi_2(i_2^{-1}(x))) & \mbox{for }x\in i_2(X_2).\end{array}\right.$$ As we see, the definition of the map $\psi_1\cup_{(f,g)}\psi_2$ is very much similar to that of the map $\varphi_1\cup_f\varphi_2$
(especially considering that our use of $j_1$ and $j_2$ serves mostly the formal purposes); the proof that it is smooth is a bit different.

\begin{prop}
The map $\psi_1\cup_{(f,g)}\psi_2$ is smooth for the gluing diffeologies on $X_1\cup_f X_2$ and $Z_1\cup_g Z_2$.
\end{prop}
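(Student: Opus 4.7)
The strategy mirrors the proof of the preceding proposition: given an arbitrary plot $p:U\to X_1\cup_f X_2$, I would use Lemma~\ref{describe:plots:gluing:diffeol:lem} to reduce locally to one of the two standard forms of a plot of the source gluing, then exhibit $(\psi_1\cup_{(f,g)}\psi_2)\circ p$ as one of the two standard forms of a plot of the target gluing $Z_1\cup_g Z_2$ (again via Lemma~\ref{describe:plots:gluing:diffeol:lem}). The difference with the previous proof is that here the values are themselves re-glued on the target side, which is why the inductions $j_1,j_2$ (and not only $i_1,i_2$) will show up in the calculation.

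In the first case, $p$ factors locally as $p=i_2\circ p_2$ for some plot $p_2:U'\to X_2$. Then by the very definition of $\psi_1\cup_{(f,g)}\psi_2$ we have $(\psi_1\cup_{(f,g)}\psi_2)\circ p=j_2\circ(\psi_2\circ p_2)$ on $U'$, which is exactly the first local form of Lemma~\ref{describe:plots:gluing:diffeol:lem}(2) applied to $Z_1\cup_g Z_2$, with lift the plot $\psi_2\circ p_2$ of $Z_2$; hence a plot of $Z_1\cup_g Z_2$. In the second (main) case, $p$ lifts locally to a plot $p_1:U'\to X_1$ in the piecewise form of Lemma~\ref{describe:plots:gluing:diffeol:lem}(2). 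Unfolding the definition of $\psi_1\cup_{(f,g)}\psi_2$ on each piece yields
$$((\psi_1\cup_{(f,g)}\psi_2)\circ p)(u')=\left\{\begin{array}{ll} j_1(\psi_1(p_1(u'))) & \mbox{if }p_1(u')\notin Y,\\ j_2(\psi_2(f(p_1(u')))) & \mbox{if }p_1(u')\in Y.\end{array}\right.$$
On the second branch the $(f,g)$-compatibility condition $\psi_2\circ f=g\circ\psi_1$ on $Y$ rewrites the value as $j_2(g(\psi_1(p_1(u'))))$; since $\psi_1(p_1(u'))\in\psi_1(Y)=Y'$ in this branch, the gluing relation on the ranges identifies $j_2(g(\psi_1(p_1(u'))))$ with $j_1(\psi_1(p_1(u')))$ in $Z_1\cup_g Z_2$. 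So the two branches agree, and the composition coincides on $U'$ with the projection into $Z_1\cup_g Z_2$ of the plot $\psi_1\circ p_1$ of $Z_1$, in exactly the piecewise form of Lemma~\ref{describe:plots:gluing:diffeol:lem}(2); hence a plot of $Z_1\cup_g Z_2$.

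The only non-routine step is the reconciliation of the two branches in the second case, which is precisely where the $(f,g)$-compatibility hypothesis is invoked (the analogue of pure $f$-compatibility in the preceding proposition); everything else is formal bookkeeping with the gluing diffeologies on source and target.
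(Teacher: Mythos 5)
Your proof is correct and follows essentially the same route as the paper's: locally lift $p$ to a plot of $X_2$ (giving $j_2\circ\psi_2\circ p_2$) or of $X_1$, and in the latter case use the $(f,g)$-compatibility to rewrite the branch over $Y$ as $j_2(g(\psi_1(p_1(u'))))$, so that the composition is recognized as the standard piecewise form, from Lemma~\ref{describe:plots:gluing:diffeol:lem}, of a plot of $Z_1\cup_g Z_2$ obtained from the plot $\psi_1\circ p_1$ of $Z_1$. The paper's own proof makes exactly this identification (introducing $q_1=\psi_1\circ p_1$ explicitly), so no further comment is needed.
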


\begin{proof}
Let $p:U\to X_1\cup_f X_2$ be a plot of $X_1\cup_f X_2$; we assume from the start that $U$ is small enough so that $p$ lifts to either a plot of $X_1$ or one of $X_2$. Suppose first that $p=i_2\circ p_2$
for an appropriate plot $p_2:U\to X_2$ of $X_2$. Then we have $(\psi_1\cup_{(f,g)}\psi_2)(p(u))=j_2(\psi_2(p_2(u)))$; since $\psi_2$ is smooth by assumption, $\psi_2\circ p_2$ is a plot of $Z_2$, and 
since $j_2$ is an induction, $j_2\circ\psi_2\circ p_2$ is a plot $Z_1\cup_g Z_2$, as wanted.

Suppose now that $p$ lifts to a plot $p_1:U\to X_1$ of $X_1$. Then we obtain
$$(\psi_1\cup_{(f,g)}\psi_2)(p(u))=\left\{\begin{array}{ll} j_1(\psi_1(p_1(u))) & \mbox{for }u\mbox{ such that }p_1(u)\in X_1\setminus Y\\
j_2(\psi_2(f(p_1(u)))) & \mbox{for }u\mbox{ such that }p_1(u)\in Y.\end{array}\right.$$ Now, observe that if $p_1(u)\in Y$ then $j_2(\psi_2(f(p_1(u))))=j_2(g(\psi_1(p_1(u))))$ by the compatibility condition, 
and $\psi_1\circ p_1$ is a plot of $Z_1$, since $\psi_1$ is smooth. Let us denote $q_1=\psi_1\circ p_1:U\to Z_1$ this plot; we obtain that
$$(\psi_1\cup_{(f,g)}\psi_2)(p(u))=\left\{\begin{array}{ll} j_1(q_1(u))) & \mbox{for }u\mbox{ such that }q_1(u)\in Z_1\setminus\psi_1(Y)\\
j_2(g(q_1(u))) & \mbox{for }u\mbox{ such that }q_1(u)\in\psi_1(Y).\end{array}\right.$$ It then follows from Lemma \ref{describe:plots:gluing:diffeol:lem} that this is a plot of $Z_1\cup_g Z_2$, which completes 
the proof.
\end{proof}

\begin{rem}
The proposition just proven will mostly be applied in the case when $\psi_1$ and $\psi_2$ are two sections of, respectively, pseudo-bundles $\pi_1:V_1\to X_1$ and $\pi_2:V_2\to X_2$ (so $Z_i=V_i$ and 
$g$ is $\tilde{f}$) and, especially, in the case of sections of the corresponding pseudo-bundles $\pi_i^*\otimes\pi_i^*:V_i^*\otimes V_i^*\to X_i$, which in what follows we call \emph{pseudo-metrics} on 
pseudo-bundles $\pi_i$. 
\end{rem}

Assigning to each pair of smooth $(f,g)$-compatible maps $\psi_1$ and $\psi_2$ the above-defined map $\psi_1\cup_{(f,g)}\psi_2$ yields a well-defined map
$$\mathcal{F}_{Z_1,Z_2}:C^{\infty}(X_1,Z_1)\times_{comp}C^{\infty}(X_2,Z_2)\to C^{\infty}(X_1\cup_f X_2,Z_1\cup_g Z_2).$$ Both the domain of definition and the range of $\mathcal{F}_{Z_1,Z_2}$ are 
naturally diffeological spaces: $C^{\infty}(X_1\cup_f X_2,Z_1\cup_g Z_2)$ has the standard functional diffeology, while $C^{\infty}(X_1,Z_1)\times_{comp}C^{\infty}(X_2,Z_2)$ carries the subset diffeology 
coming from the inclusion $C^{\infty}(X_1,Z_1)\times_{comp}C^{\infty}(X_2,Z_2)\subset C^{\infty}(X_1,Z_1)\times C^{\infty}(X_2,Z_2)$, with the latter space considered with the product diffeology relative to 
the functional diffeologies on $C^{\infty}(X_1,Z_1)$ and $C^{\infty}(X_2,Z_2)$. With respect to these diffeologies the following statement holds.

\begin{thm}\label{f-g-compatible:gluing:smooth:thm}
The map $\mathcal{F}_{Z_1,Z_2}$ is smooth.
\end{thm}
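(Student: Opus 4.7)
The plan is to adapt the proof of Theorem \ref{f-compatible:gluing:smooth:thm} by incorporating the gluing on the range side, the new feature being the presence of the inductions $j_1,j_2$ into $Z_1\cup_g Z_2$ instead of a fixed target space. Unwinding the definition of the functional diffeology (twice, once for the domain $C^{\infty}(X_1\cup_f X_2,Z_1\cup_g Z_2)$ and once for the spaces $C^{\infty}(X_i,Z_i)$), the task reduces to the following: given a plot $p=(p_1,p_2):U\to C^{\infty}(X_1,Z_1)\times_{comp}C^{\infty}(X_2,Z_2)$ and a plot $q:U'\to X_1\cup_f X_2$, one must show that $(u,u')\mapsto(\mathcal{F}_{Z_1,Z_2}\circ p)(u)(q(u'))$ is a plot of $Z_1\cup_g Z_2$. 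Smoothness is a local property, so I may shrink $U'$ and, by Lemma \ref{describe:plots:gluing:diffeol:lem}, assume $q$ lifts in one of two ways.

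First I would handle the easy case $q=i_2\circ q_2$ for $q_2:U'\to X_2$. Here $(\mathcal{F}_{Z_1,Z_2}\circ p)(u)(q(u'))=j_2(p_2(u)(q_2(u')))$, and since $p_2$ is a plot of $C^{\infty}(X_2,Z_2)$ with $q_2$ a plot of $X_2$, the map $(u,u')\mapsto p_2(u)(q_2(u'))$ is a plot of $Z_2$. Post-composing with the smooth induction $j_2$ yields a plot of $Z_1\cup_g Z_2$, as required.

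The more interesting case is when $q$ lifts to a plot $q_1:U'\to X_1$, so that $q(u')=i_1(q_1(u'))$ when $q_1(u')\in X_1\setminus Y$ and $q(u')=i_2(f(q_1(u')))$ when $q_1(u')\in Y$. Using the definition of $\psi_1\cup_{(f,g)}\psi_2$ with $\psi_i=p_i(u)$, I obtain
$$(\mathcal{F}_{Z_1,Z_2}\circ p)(u)(q(u'))=\begin{cases} j_1(p_1(u)(q_1(u'))) & \text{if } q_1(u')\in X_1\setminus Y, \\ j_2(p_2(u)(f(q_1(u')))) & \text{if } q_1(u')\in Y. \end{cases}$$
This is the place where the $(f,g)$-compatibility of $p_1(u)$ and $p_2(u)$ (which holds for every $u\in U$ by the hypothesis on $p$) intervenes: it lets me rewrite $p_2(u)(f(q_1(u')))=g(p_1(u)(q_1(u')))$. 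Setting $r(u,u'):=p_1(u)(q_1(u'))$, which is a plot of $Z_1$ by the functional-diffeology property of $p_1$, the expression above becomes $j_1(r(u,u'))$ on $\{q_1(u')\in X_1\setminus Y\}=\{r(u,u')\in Z_1\setminus\psi_1(Y)\}$ (use the definition of $\psi_1(Y)$ combined with $r(u,u')=\psi_1(q_1(u'))$) and $j_2(g(r(u,u')))$ elsewhere. Applying Lemma \ref{describe:plots:gluing:diffeol:lem} in reverse identifies this precisely as the local form of a plot of $Z_1\cup_g Z_2$, which closes the case.

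The main obstacle I anticipate is the bookkeeping in the second case, specifically verifying that the two branches of the piecewise formula genuinely fit the description of a plot of $Z_1\cup_g Z_2$ provided by Lemma \ref{describe:plots:gluing:diffeol:lem}. In particular one must check that the set $\{u'\in U'\mid q_1(u')\in Y\}$ coincides (through $r=\psi_1\circ q_1$, which depends also on $u$) with the set where the ``$Y'$-branch'' of the plot description applies; the compatibility condition is what makes the two descriptions match on the overlap, turning an \emph{a priori} only set-theoretic equality into a smooth gluing of two $Z_1\cup_g Z_2$-valued plots. Once this is in place, everything else is a direct invocation of the relevant definitions.
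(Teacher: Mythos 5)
Your argument is correct in substance but takes a genuinely different route from the paper's. The paper does not redo the plot-by-plot verification: it post-composes each $\psi_i$ with $Z_i\hookrightarrow Z_1\sqcup Z_2\to Z_1\cup_g Z_2$ to obtain $f$-compatible maps $\varphi_i:X_i\to Z_1\cup_g Z_2$ satisfying $\varphi_1\cup_f\varphi_2=\psi_1\cup_{(f,g)}\psi_2$, observes that $\psi_i\mapsto\varphi_i$ is smooth for the functional diffeologies (being post-composition with a fixed smooth map), writes $\mathcal{F}_{Z_1,Z_2}=\mathcal{F}_{Z_1\cup_g Z_2}\circ\Phi$, and invokes Theorem \ref{f-compatible:gluing:smooth:thm} with $Z=Z_1\cup_g Z_2$. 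That factorization buys precisely the bookkeeping you flag at the end as the main obstacle: one never has to compare the partition of $U'$ by $q_1(u')\in Y$ with the partition by $r(u,u')\in\psi_1(Y)$. Your direct verification is more self-contained but has to confront that comparison head-on.

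On that point your justification is imprecise. The set equality $\{q_1(u')\in X_1\setminus Y\}=\{r(u,u')\in Z_1\setminus\psi_1(Y)\}$ asserted in passing is in general only the inclusion $\supseteq$: the map $\psi_1=p_1(u)$ may send a point of $X_1\setminus Y$ into $\psi_1(Y)$, and on that discrepancy set your formula reads $j_1(r(u,u'))$ while the plot description of Lemma \ref{describe:plots:gluing:diffeol:lem} calls for $j_2(g(r(u,u')))$. What reconciles the two is \emph{not} the compatibility condition (which you have already spent rewriting $p_2(u)(f(q_1(u')))$ as $g(r(u,u'))$), but the defining identification in the quotient $Z_1\cup_g Z_2$: for $z\in\psi_1(Y)$ the classes of $z$ and of $g(z)$ coincide, so both branches of your piecewise formula are the composition of $r$ with the quotient projection $Z_1\sqcup Z_2\to Z_1\cup_g Z_2$ (restricted to the $Z_1$ summand), hence a plot of $Z_1\cup_g Z_2$ since $r$ is a plot of $Z_1$. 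With that repair the direct proof goes through.
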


\begin{proof}
Let $\varphi_i:X_i\to Z_1\cup_g Z_2$ be the composition of $\psi_i$ with the following: $Z_i\hookrightarrow(Z_1\sqcup Z_2)\to Z_1\cup_g Z_2$, for $i=1,2$. As a composition of three smooth maps, each
$\varphi_i$ is a smooth map.

Let us show that $\varphi_1$ and $\varphi_2$ are $f$-compatible. We need to check that for all $y\in Y$ we have $\varphi_2(f(y))=\varphi_1(y)$. But by their definitions, $\varphi_1(y)=g(\psi_1(y))$, and 
$\varphi_2(f(y))=\psi_2(f(y))$, so the desired equality follows from the $(f,g)$-compatibility between $\psi_1$ and $\psi_2$.

Let us show that $\varphi_1\cup_f\varphi_2$ coincides with $\psi_1\cup_{(f,g)}\psi_2$. By definition of the former, 
$$(\varphi_1\cup_f\varphi_2)(x)=\left\{\begin{array}{ll}\varphi_1(i_1^{-1}(x))=j_1(\psi_1(i_1^{-1}(x))) & \mbox{for }x\in i_1(X_1\setminus Y),\\
\varphi_2(i_2^{-1}(x))=j_2(\psi_2(i_2^{-1}(x))) & \mbox{for }x\in i_2(X_2).\end{array}\right.$$ But the right-hand sides of the two equalities above define precisely the map $\psi_1\cup_{(f,g)}\psi_2$, 
as wanted.

Observe now that the assignment $\psi_i\mapsto\varphi_i$ yields a well-defined map $\Phi_i:C^{\infty}(X_i,Z_i)\to C^{\infty}(X_i,Z_1\cup_g Z_2)$; and since this assignment acts by a post-composition with 
the same fixed map, $\Phi_i$ is smooth for the functional diffeologies on the two spaces involved. Furthermore, the appropriate restriction $\Phi$ of
$(\Phi_1,\Phi_2):C^{\infty}(X_1,Z_1)\times C^{\infty}(X_2,Z_2)\to C^{\infty}(X_1,Z_1\cup_g Z_2)\times C^{\infty}(X_2,Z_1\cup_g Z_2)$ is a well-defined smooth map 
$C^{\infty}(X_1,Z_1)\times_{comp}C^{\infty}(X_2,Z_2)\to C^{\infty}(X_1,Z_1\cup_g Z_2)\times_{comp}C^{\infty}(X_2,Z_1\cup_g Z_2)$, where the former is with respect to the $(f,g)$-compatibility and the latter 
is with respect to the $f$-compatibility. It remains to observe that $\mathcal{F}_{Z_1,Z_2}=\mathcal{F}_{Z_1\cup_g Z_2}\circ\Phi$, so applying Theorem \ref{f-compatible:gluing:smooth:thm} for 
$Z=Z_1\cup_g Z_2$, we conclude that $\mathcal{F}_{Z_1,Z_2}$ is smooth, as desired.
\end{proof}

\subsection{Gluing together two sections of two pseudo-bundles}

Let $\pi_1:V_1\to X_1$ and $\pi_2:V_2\to X_2$ be two diffeological vector pseudo-bundles, and let $(\tilde{f},f)$ be a gluing between them. For two sections $s_1\in C^{\infty}(X_1,V_1)$ and 
$s_2\in C^{\infty}(X_2,V_2)$ being $(f,\tilde{f})$-compatible is just a partial case of the $(f,g)$-compatibility above, and if $s_1$ and $s_2$ are so, the image 
$\mathcal{F}_{V_1,V_2}(s_1,s_2)=s_1\cup_{(f,\tilde{f})}s_2$ is a smooth section of the pseudo-bundle $\pi_1\cup_{(\tilde{f},f)}\pi_2:V_1\cup_{\tilde{f}}V_2\to X_1\cup_f X_2$. Below we prove two statements 
applying specifically to this setting.

\begin{prop}
Let $s_1\in C^{\infty}(X_1,V_1)$ and $s_2\in C^{\infty}(X_2,V_2)$ be $(f,\tilde{f})$-compatible, and let $h_1\in C^{\infty}(X_1,\matR)$ and $h_2\in C^{\infty}(X_2,\matR)$ be $f$-compatible functions, that is, 
such that $h_1(y)=h_2(f(y))$ for all $y\in Y$. Then
$$(h_1\cup_f h_2)(s_1\cup_{(f,\tilde{f})}s_2)=(h_1s_1)\cup_{(f,\tilde{f})}(h_2s_2).$$
\end{prop}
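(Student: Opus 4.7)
The plan is a direct pointwise verification at each $x \in X_1 \cup_f X_2$, after first checking that the right-hand side even makes sense.

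First I would check that $h_1 s_1$ and $h_2 s_2$ are $(f,\tilde{f})$-compatible, so that the gluing $(h_1 s_1)\cup_{(f,\tilde{f})}(h_2 s_2)$ is well-defined as a smooth section. For any $y\in Y$, using the fibrewise linearity of $\tilde{f}$, the $(f,\tilde{f})$-compatibility of $(s_1,s_2)$, and the $f$-compatibility of $(h_1,h_2)$, I compute
\[
\tilde{f}\bigl(h_1(y) s_1(y)\bigr) \;=\; h_1(y)\,\tilde{f}(s_1(y)) \;=\; h_1(y)\, s_2(f(y)) \;=\; h_2(f(y))\, s_2(f(y)),
\]
which is exactly what is needed.

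Next I would verify that both sides of the claimed equality agree as maps $X_1\cup_f X_2 \to V_1\cup_{\tilde{f}}V_2$ by splitting into the two cases from the definition of the gluing of maps (as in Section~\ref{f-g:gluing:of:maps:sect}). For $x=i_1(x_1)$ with $x_1\in X_1\setminus Y$, the left-hand side evaluates to $h_1(x_1)\cdot j_1(s_1(x_1))$, while the right-hand side evaluates to $j_1(h_1(x_1) s_1(x_1))$; these are equal because $j_1$ is induced by the natural inclusion $V_1\hookrightarrow V_1\sqcup V_2$ followed by the quotient projection, hence preserves scalar multiplication in each fibre. The case $x=i_2(x_2)$ with $x_2\in X_2$ is analogous, both sides reducing to $j_2(h_2(x_2) s_2(x_2))$, again because $j_2$ respects the scalar multiplication on each fibre of $V_2$.

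The only genuine subtlety — but not really an obstacle — is that on the overlap $x=i_2(f(y))=i_1(y)$ for $y\in Y$ one has two possible lifts, and one should check the two case-definitions give the same value on this overlap; but this is just the $(f,\tilde{f})$-compatibility of $(h_1 s_1, h_2 s_2)$ established in the first step, together with the identification $j_1(\pi_1^{-1}(y))$ and $j_2(\pi_2^{-1}(f(y)))$ via $\tilde{f}$ in the glued total space. Since both sides of the displayed equation are defined by the same case-split on $i_1(X_1\setminus Y)\sqcup i_2(X_2)$, the pointwise agreement in each case yields the claimed equality of sections.
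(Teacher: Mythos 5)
Your proof is correct and follows essentially the same route as the paper's: a pointwise case-split over the disjoint cover $i_1(X_1\setminus Y)\sqcup i_2(X_2)$, using that $j_1$ and $j_2$ respect fibrewise scalar multiplication. Your preliminary verification that $(h_1s_1,h_2s_2)$ is $(f,\tilde{f})$-compatible is a detail the paper leaves implicit, and is a worthwhile addition; note only that since $i_1$ is defined on $X_1\setminus Y$ the two cases are genuinely disjoint, so the "overlap" check you mention is subsumed by that compatibility statement rather than being a separate case.
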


\begin{proof}
The proof is by pointwise consideration of the sections on the two sides of the equality. Namely, recall that
$$(h_1\cup_f h_2)(x)=\left\{\begin{array}{ll} h_1(i_1^{-1}(x)) & \mbox{if }x\in\mbox{Im}(i_1),\\ h_2(i_2^{-1}(x)) & \mbox{if }x\in\mbox{Im}(i_2), \end{array}\right.$$ and accordingly,
$$(s_1\cup_{(f,\tilde{f})} s_2)(x)=\left\{\begin{array}{ll} j_1(s_1(i_1^{-1}(x))) & \mbox{if }x\in\mbox{Im}(i_1),\\ j_2(s_2(i_2^{-1}(x))) & \mbox{if }x\in\mbox{Im}(i_2).\end{array}\right.$$ Therefore
$$(h_1\cup_f h_2)(s_1\cup_{(f,\tilde{f})} s_2)(x)=\left\{\begin{array}{ll} (h_1(i_1^{-1}(x)))\cdot j_1(s_1(i_1^{-1}(x)))=j_1((h_1s_1)(i_1^{-1}(x))) & \mbox{if }x\in\mbox{Im}(i_1),\\
(h_2(i_2^{-1}(x)))\cdot j_2(s_2(i_2^{-1}(x)))=j_2((h_2s_2)(i_2^{-1}(x))) & \mbox{if }x\in\mbox{Im}(i_2),\end{array}\right.$$ whence the conclusion.
\end{proof}

The next statement essentially follows from the already-seen commutativity between gluing and tensor product (in particular, the proof is a combination of the pointwise consideration of the two sections, 
and the above-mentioned commutativity).

\begin{prop}
Let $s_i\in C^{\infty}(X_i,V_i)$ and $s_i'\in C^{\infty}(X_i,V_i')$ be such that $s_1,s_2$ and $s_1',s_2'$ are two pairs of $(f,\tilde{f})$-compatible sections. Let $\tilde{f}'$ be a lift of
$f$ to pseudo-bundles $V_1',V_2'$. Then
$$(s_1\cup_{(f,\tilde{f})}s_2)\otimes(s_1'\cup_{(f,\tilde{f}')}s_2')=(s_1\otimes s_1')\cup_{(f,\tilde{f}\otimes\tilde{f}')}(s_2\otimes s_2').$$
\end{prop}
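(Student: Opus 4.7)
The plan is to verify the equality pointwise, after identifying the two target pseudo-bundles via the diffeomorphism $\Psi$ constructed in the tensor-product/gluing commutativity theorem earlier in the paper. Both sides are sections over $X_1\cup_f X_2$, with the left-hand side a section of $(\pi_1\cup_{(\tilde{f},f)}\pi_2)\otimes(\pi_1'\cup_{(\tilde{f}',f)}\pi_2')$ and the right-hand side a section of $(\pi_1\otimes\pi_1')\cup_{\tilde{f}\otimes\tilde{f}'}(\pi_2\otimes\pi_2')$. Since $\Psi$ is already known to be a smooth fibrewise isomorphism over the identity on the base, it suffices to show that the two sections agree fibrewise after applying $\Psi$.

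First I would unwind the definitions. Using the notations $i_1,i_2,j_1,j_2$ (and their primed analogues $j_1',j_2'$ on $V_1',V_2'$), on the open set $i_1(X_1\setminus Y)$ both pairings $s_1\cup_{(f,\tilde{f})}s_2$ and $s_1'\cup_{(f,\tilde{f}')}s_2'$ reduce to $j_1\circ s_1\circ i_1^{-1}$ and $j_1'\circ s_1'\circ i_1^{-1}$ respectively, so their tensor product equals $(j_1\otimes j_1')\circ(s_1\otimes s_1')\circ i_1^{-1}$; on the right-hand side the same point has image $(s_1\otimes s_1')\circ i_1^{-1}$ pushed into the gluing, which by Case~1 of the tensor-gluing theorem is carried by $\Psi$ to the former expression. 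An analogous identity holds on the part of $i_2(X_2)$ disjoint from the gluing locus (Case~2).

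The only case requiring care is a point $x\in i_2(f(Y))$, i.e.\ a point $y\in Y$ identified with $f(y)\in f(Y)$. Here the left-hand side reads $j_2(s_2(f(y)))\otimes j_2'(s_2'(f(y)))$ after using the $(f,\tilde f)$- and $(f,\tilde f')$-compatibility to replace $\tilde{f}(s_1(y))$ by $s_2(f(y))$ and $\tilde{f}'(s_1'(y))$ by $s_2'(f(y))$; the right-hand side at the same point is the class of $s_2(f(y))\otimes s_2'(f(y))$ in $(V_1\otimes V_1')\cup_{\tilde{f}\otimes\tilde{f}'}(V_2\otimes V_2')$, which again corresponds under $\Psi$ (Case~3 of the theorem) to the former. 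Thus $\Psi\circ\text{LHS}=\text{RHS}$ everywhere, and since $\Psi$ is the canonical identification used implicitly in the statement, the equality holds.

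The only potentially subtle step is the case of points in the domain of gluing: one must be careful that the two \emph{a priori} distinct descriptions of the fibre of a glued tensor-product pseudo-bundle (via lifting each factor through $V_1\sqcup V_1'$ vs.\ through $V_2\sqcup V_2'$) really do match, and this is precisely where the $(f,\tilde{f})$- and $(f,\tilde{f}')$-compatibility of the two pairs of sections is used; but this reduces to the already-established fact that the natural map $\Psi$ is well-defined, so no new difficulty arises.
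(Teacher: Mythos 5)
Your argument is correct and follows exactly the route the paper itself indicates (the paper omits a detailed proof, remarking only that the statement is ``a combination of the pointwise consideration of the two sections and the commutativity between gluing and tensor product''): you compare the two sections fibrewise through the diffeomorphism $\Psi$ of the tensor--gluing commutativity theorem, treating the three cases $i_1(X_1\setminus Y)$, $i_2(X_2\setminus f(Y))$, and $i_2(f(Y))$, and invoking the $(f,\tilde{f})$- and $(f,\tilde{f}')$-compatibility over the gluing locus. This is precisely the intended proof, so nothing further is needed.
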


\section{Pseudo-metrics on pseudo-bundles}

In this section we consider the notion obtained by a natural extension of the usual definition of a Riemannian metric to the case of diffeological vector pseudo-bundles. This extension has two aspects: one is 
that the requirement of smoothness assumes its diffeological meaning; the other is that instead of asking for a scalar product on each fibre, we ask for there to be a pseudo-metric in the sense of diffeological 
vector spaces (see \cite{pseudometric} and Section 1). The notion thus obtained is again called a (diffeological) \emph{pseudo-metric} on a pseudo-bundle; it was already considered in \cite{pseudobundles}. 
Here we build on and extend the results of the latter (briefly citing them as necessary).

\subsection{The definition and examples}

As has been indicated above, the definition of a pseudo-metric on a finite-dimensional diffeological vector pseudo-bundle is given in a form very much similar to that of a usual Riemannian metric.

\begin{defn}
Let $\pi:V\to X$ be a diffeological vector pseudo-bundle with finite-dimensional fibres. A \textbf{pseudo-metric} on this pseudo-bundle is a smooth section $g:X\to V^*\otimes V^*$ of the pseudo-bundle 
$V^*\otimes V^*$ such that for every $x\in X$ the bilinear form $g(x)$ is symmetric, positive semi-definite, and of rank equal to $\dim((\pi^{-1}(x))^*)$ (\emph{i.e.}, $g(x)$ is a pseudo-metric on the diffeological 
vector space $\pi^{-1}(x)$).
\end{defn}

Let us also briefly explain why $g(x)$ is, in fact, a bilinear form. This is for the same reason as it would be so in the usual Riemannian context: indeed, by Theorem 2.3.5 of \cite{vincent} there is a 
diffeomorphism between the space of bilinear forms on any diffeological vector space $W$ and $(W\otimes W)^*$, and it was shown in \cite{multilinear} that $(W\otimes W)^*$ is indeed diffeomorphic to 
$W^*\otimes W^*$.

A usual metric on the tangent bundle of a smooth manifold is of course an example of a pseudo-metric on a pseudo-bundle, but let us give another one, more specific to diffeology.

\begin{example}
Take $X$ to be the standard $\matR$, and endow $V=\matR^3$ with the pseudo-bundle diffeology, relative to its projection onto the first coordinate, and generated by the plot
$p:\matR^2\ni(u,v)\mapsto(u,0,|v|)\in\matR^3=V$. Thus, the diffeology of $V$, as well as its structure as the total space of this pseudo-bundle, corresponds to its presentation as the direct product 
$\matR\times\matR^2$; this product diffeology comes from the standard diffeology on $\matR=X$ and the vector space diffeology on $\matR^2$ generated by the plot $v\mapsto(0,|v|)$. The pseudo-bundle
map $\pi$ is just the projection onto the first factor.

Since the diffeology on $V$ is a product diffeology, the pseudo-bundle in question is trivial (diffeologically), so finding a pseudo-metric is easy; we can choose one on the fibre and set it to be the chosen one 
on any other fibre. Thus, the question reduces itself to one regarding pseudo-metrics, in the sense of vector spaces, on $\matR^2$ (we write it in the $(y,z)$-coordinates coming from the ambient $\matR^3=V$) 
with diffeology generated by $v\mapsto(0,|v|)$. This we have seen already, in the sense that each smooth linear map $f$ on this space is given by $(y,z)\mapsto ay$ for some constant $a$; by abuse of notation 
we can then write $f$ as $f=ae^2$ (making an implicit reference to the canonical dual basis of the ambient $\matR^3$ and the same direct product decomposition of its dual space). It follows then that any 
pseudo-metric $g(x)$ on a given fibre $\pi^{-1}(x)$ has form $g(x)=a(x)e^2\otimes e^2$, where $a:\matR\to\matR$ is a smooth everywhere positive function.
\end{example}

\subsection{Non-existence of pseudo-metrics}

It is in fact rather easy to show why a pseudo-metric does not always exist; one of the simplest examples of a non-locally trivial pseudo-bundle suffices for that.

\begin{example}\label{no:pseudometric:exists:ex}
Let $V=\matR^2$ and $X=\matR$; let $\pi:V\to X$ be the projection on the $x$-axis, $\pi(x,y)=x$. Endow $X$ with the standard diffeology, and endow $V$ with the pseudo-bundle diffeology generated by the
plot $p:\matR^2\to V$ defined by $p(u,v)=(u,u|v|)$.
\end{example}

\begin{lemma}\label{there:is:no:pseudometric:lem}
If $\pi:V\to X$ is the pseudo-bundle of Example \ref{no:pseudometric:exists:ex}, it does not admit a pseudo-metric.
\end{lemma}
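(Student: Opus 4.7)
The plan is to argue by contradiction: assume a pseudo-metric $g$ exists, determine its forced pointwise form from the rank condition, and then show that no such section can be smooth.

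First I would pin down the diffeological dual of each fibre. For $x\neq 0$, the subset diffeology on $\pi^{-1}(x)\cong\mathbb{R}$ contains the plot $u\mapsto (1/x)\cdot p(x,u)=(x,|u|)$ (obtained by pseudo-bundle scalar multiplication of the generating plot by the smooth function $1/x$), so any smooth linear functional $y\mapsto cy$ on this fibre must make $c|u|$ smooth, forcing $c=0$; thus $\dim((\pi^{-1}(x))^*)=0$. For $x=0$, I would enumerate the plots of $V$ landing in $\pi^{-1}(0)$: since the generating plot vanishes identically when its first argument is $0$, every such plot is built from scalar multiples and sums of constant plots $u\mapsto(0,y_0)$ in that fibre, and these produce exactly the maps $u\mapsto(0,h(u))$ for $h$ smooth. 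Hence $\pi^{-1}(0)$ carries the standard diffeology and $\dim((\pi^{-1}(0))^*)=1$.

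Combined with the rank condition, this forces $g(x)=0$ for all $x\neq 0$ and $g(0)=c\,e^{*}\otimes e^{*}$ for some $c>0$ in the natural coordinates of $\pi^{-1}(0)\cong\mathbb{R}$.

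The final step is to derive a contradiction with the smoothness of the section $g:X\to V^*\otimes V^*$. Notice that $g$ agrees with the smooth zero section $z^{*}$ of $V^*\otimes V^*$ at every $x\neq 0$, whereas $g(0)\neq z^{*}(0)=0_{0}$. Composing with the identity plot $\mathrm{id}_{X}$, if $g$ were smooth then $g$ itself would be a plot of $V^*\otimes V^*$ whose base-point map $\pi^{*\otimes *}\circ g$ is the identity on an open neighbourhood of $0$. By the pseudo-bundle tensor product construction, such a plot must locally decompose as a finite sum $\sum_i \phi_i\otimes\psi_i$ with each $\phi_i,\psi_i$ a plot of $V^*$ over the same base-point map; each such $\phi_i(x)$ lies in $(\pi^{-1}(x))^{*}=\{0\}$ for $x\neq 0$, and I would argue (using the evaluation characterization of plots of $V^*$ against the smooth family of plots $u\mapsto(0,1)$ in the fibre over $0$) that $\phi_i$ must likewise vanish at $x=0$. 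Summing and tensoring these forces $g(0)=0$, contradicting $g(0)\neq 0$.

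The hard part is precisely this last step: one needs to rule out a ``delta-type'' plot of $V^*$ over a neighbourhood of $0$ that vanishes off $0$ but takes a non-zero value at $0$. This rigidity has to be extracted from the interplay between the generated pseudo-bundle diffeology on $V$ (which admits no plot interpolating between a point of $\pi^{-1}(0)$ with non-zero second coordinate and the zero section on $x\neq 0$) and the tensor product diffeology on $V^*\otimes V^*$; the failure of such an interpolation is exactly what makes the candidate section $g$ fail to qualify as a plot.
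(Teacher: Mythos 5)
Your setup is exactly the paper's: you compute the fibrewise duals (trivial for $x\neq 0$, the standard $\mathbb{R}$ over the origin), conclude that $V^*\otimes V^*$ is the zero bundle away from the origin and one-dimensional over it, and hence that any candidate pseudo-metric is forced into the delta-function form $g(x)=\delta(x)\,c\,e^2\otimes e^2$ with $c>0$. The gap is in the final step, which you yourself flag as ``the hard part'': the mechanism you sketch does not close it. Evaluating a plot $\phi_i$ of $V^*$ against the constant plot $u'\mapsto(0,1)$ lying entirely in the fibre over $0$ produces a function whose domain is $\{(u,u')\,:\,\pi^*(\phi_i(u))=0\}$; when the base-point map of $\phi_i$ is (locally) the identity, this set is the single slice $\{0\}\times U'$, which is not open in $U\times U'$, and on it the evaluation is the constant $\phi_i(0)(0,1)$ --- smooth for the subset diffeology whatever its value. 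That test therefore detects nothing and cannot force $\phi_i(0)=0$; a ``delta-type'' plot is not excluded by it.

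What does work --- and is the paper's one-line finish --- is to test against a plot of $V$ that crosses the special fibre transversally. The section $q:u\mapsto(u,1)$ is a plot of the pseudo-bundle diffeology of $V$, and since $V^*\otimes V^*=(V\otimes V)^*$ it suffices to evaluate the candidate section on $q\otimes q$: the domain of this evaluation is open, and the resulting function is $u\mapsto c\,\delta(u)$, which is not even continuous. This single evaluation disposes of every candidate at once and makes your local decomposition of plots of the tensor product into sums $\sum_i\phi_i\otimes\psi_i$ unnecessary. To repair your argument along its own lines you would in any case have to produce precisely such a transversal test plot, so the missing ingredient is the observation that $u\mapsto(u,1)$ belongs to the generated pseudo-bundle diffeology.
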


\begin{proof}
Observe that, outside of the origin of $X$, the diffeology on the fibre is the vector space diffeology on $\matR$ generated by the plot $v\mapsto|v|$, while over the origin the fibre is $\matR$ with the standard 
diffeology. This implies, in particular, that the fibre over a point $x\neq 0$ has trivial dual, while the dual of a fibre over $x=0$ is the standard $\matR$.

Let us now consider the dual bundle. Since from the topological point of view the pseudo-bundle $V$ is a trivial vector bundle, and its vector bundle structure corresponds to the direct product decomposition 
$\matR\times\matR$, we can represent each element $v^*\in V^*$ of the dual bundle $V^*$ in the form $(x,f(x)e^2)$, where $x=\pi^*(v^*)$ and $f(x)e^2$ describes the action of $v^*$ on the fibre 
$\pi^{-1}(x)=\{(x,y)|y\in\matR\}$ (any linear map on this fibre is a multiple of $e^2$ given by $e^2(x,1)=1$; $f(x)$ is the corresponding coefficient).

Now, by definition, a pseudo-metric is a smooth section of $V^*\otimes V^*$; by the same reasoning as above, it can be written as $x\mapsto(x,f(x)e^2\otimes e^2)$ for some function $f:\matR\to\matR$. Notice 
that, since the tensor product $V^*\otimes V^*$ is fibrewise, it is zero everywhere except the origin (over which it is the standard $\matR$). Thus, $f(x)$ is a positive multiple of the $\delta$-function, namely, the 
function $\delta:\matR\to\matR$ given by $\delta(0)=1$ and $\delta(x)=0$ for $x\neq 0$. Thus, let us see whether $g:x\mapsto(x,\delta(x)e^2\otimes e^2)$ is, or is not, a smooth section of $V^*\otimes V^*$.

By linearity and the fact that $V^*\otimes V^*=(V\otimes V)^*$, it suffices to establish the smoothness of the evaluation of $g$ on $q\otimes s$, where $q,s:U\to V$ are an arbitrary pair of plots of $V$ (such that 
$q_1(u)=s_1(u)$). Notice that we can choose $q(u)=s(u)=(u,1)$ for $u\in\matR$ (this is a plot for the pseudo-bundle diffeology). Then the evaluation in question is the function $u\mapsto\delta(u)$, and so, 
obviously, it is not smooth. Since this was the unique possibility, we conclude that no pseudo-metric exists on this pseudo-bundle.
\end{proof}

Lemma \ref{there:is:no:pseudometric:lem} thus shows that a non locally trivial pseudo-bundle may well not admit a pseudo-metric (in the concluding section of this paper we mention a way to rectify this 
specific situation, via a concept of a \emph{$\delta$-metric}), although we would not be ready to describe the totality of situations for non-existence. Likewise, as far as existence matters are concerned, we 
are not able to give a complete characterization of the class of pseudo-bundles that admit a pseudo-metric; the discussion that follows is mostly restricted to the case of pseudo-bundles that are obtained by 
gluing of several (a finite number of) copies of diffeological trivial bundles.

\section{Pseudo-metrics and gluing}

Continuing from the previous section, we turn to the question which in full generality is stated as follows: given a gluing of two pseudo-bundles endowed with a pseudo-metric, when does there exist a
natural pseudo-metric on the result of gluing?

For the duration of this section we fix the following notation: $\pi_1:V_1\to X_1$ and $\pi_2:V_2\to X_2$ are two diffeological vector pseudo-bundles, carrying pseudo-metrics $g_1:X_1\to V_1^*\otimes V_1^*$ 
and $g_2:X_2\to V_2^*\otimes V_2^*$, respectively. Furthermore, we are given a gluing between the two pseudo-bundles, along the maps $f:X_1\supset Y\to X_2$ and $\tilde{f}:\pi_1^{-1}(Y)\to\pi_2^{-1}(f(Y))$. 
Recall that both maps are smooth, and, moreover, $\tilde{f}$ is linear on each fibre in its domain of definition.

\subsection{Compatibility for pseudo-metrics and the induced gluing}

A natural compatibility, with respect to gluing, condition for pseudo-metrics can be stated with only obvious geometric considerations in mind. Specifically, for all $y\in Y$ and for all $v_1,v_2\in\pi_1^{-1}(Y)$ 
we must have
$$g_1(y)(v_1,v_2)=g_2(f(y))(\tilde{f}(v_1),\tilde{f}(v_2)).$$

One may wonder then, whether this notion corresponds to some specific instance of the $(f,g)$-compatibility of smooth maps, introduced in Section \ref{f-g:compatibility:sect}, and if the map 
$\mathcal{F}_{Z_1,Z_2}$ considered in Section \ref{f-g:gluing:of:maps:sect} could be employed to assign to our two pseudo-metrics $g_1$ and $g_2$ one on the pseudo-bundle $\pi_1\cup_{(\tilde{f},f)}\pi_2$ 
obtained by gluing. The answer is easily positive to the former:

\begin{lemma}
Suppose that $f$ is invertible. Then the pseudo-metrics $g_1$ and $g_2$ are compatible if and only if they are $(f^{-1},\tilde{f}^*\otimes\tilde{f}^*)$-compatible.
\end{lemma}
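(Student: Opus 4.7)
The plan is to unwind both sides of the claimed equivalence until they become literally the same statement, using only the canonical identification between $V^*\otimes V^*$ and the space of bilinear forms (as recalled via Theorem 2.3.5 of \cite{vincent} just after the definition of pseudo-metric).

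First I would write out the $(f^{-1},\tilde{f}^*\otimes\tilde{f}^*)$-compatibility condition explicitly. Since $f^{-1}:X_2\supset f(Y)\to X_1$, the role of ``$\psi_1$'' is played by $g_2:X_2\to V_2^*\otimes V_2^*$ and that of ``$\psi_2$'' by $g_1:X_1\to V_1^*\otimes V_1^*$, while the ``$g$'' of Section~\ref{f-g:compatibility:sect} is the fibrewise linear map $\tilde{f}^*\otimes\tilde{f}^*:(V_2^*\otimes V_2^*)|_{f(Y)}\to V_1^*\otimes V_1^*$ (which is well-defined on $g_2(f(Y))$ because $g_2$ is a section and $\tilde{f}^*\otimes\tilde{f}^*$ is defined on the whole pre-image $(\pi_2^*\otimes\pi_2^*)^{-1}(f(Y))$). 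Thus the $(f^{-1},\tilde{f}^*\otimes\tilde{f}^*)$-compatibility reads
$$(\tilde{f}^*\otimes\tilde{f}^*)(g_2(z))=g_1(f^{-1}(z))\quad\text{for all }z\in f(Y).$$
Setting $z=f(y)$ with $y\in Y$ (which is possible because $f$ is invertible), this becomes
$$(\tilde{f}^*\otimes\tilde{f}^*)(g_2(f(y)))=g_1(y)\quad\text{for all }y\in Y. \qquad(*)$$

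Next I would translate $(*)$ into an equality of bilinear forms on $\pi_1^{-1}(y)$. Under the identification of $V_2^*\otimes V_2^*$ with bilinear forms on $V_2\times_{X_2}V_2$, the element $g_2(f(y))$ corresponds to the bilinear form $(w_1,w_2)\mapsto g_2(f(y))(w_1,w_2)$ on the fibre $\pi_2^{-1}(f(y))$. The fibrewise linear map $\tilde{f}^*\otimes\tilde{f}^*$ acts as pullback along $\tilde{f}$, so
$$\bigl((\tilde{f}^*\otimes\tilde{f}^*)(g_2(f(y)))\bigr)(v_1,v_2)=g_2(f(y))(\tilde{f}(v_1),\tilde{f}(v_2))$$
for all $v_1,v_2\in\pi_1^{-1}(y)$. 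Therefore $(*)$ holds if and only if
$$g_1(y)(v_1,v_2)=g_2(f(y))(\tilde{f}(v_1),\tilde{f}(v_2))\quad\text{for all }y\in Y\text{ and all }v_1,v_2\in\pi_1^{-1}(y),$$
which is precisely the compatibility condition for the pseudo-metrics $g_1$ and $g_2$ stated at the start of the subsection. The two conditions are therefore equivalent, proving the lemma in both directions at once.

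The only non-routine step is making sure that the direction of the induced gluing on the duals is correctly tracked: since $\tilde{f}:V_1\supset\pi_1^{-1}(Y)\to\pi_2^{-1}(f(Y))\subset V_2$, the fibrewise dual $\tilde{f}^*$ points from $V_2^*$ back to $V_1^*$, which is exactly why the base map has to be $f^{-1}$ rather than $f$ in the compatibility condition. Once this bookkeeping is right, the proof is just the observation that ``pullback of bilinear forms'' is the concrete meaning of $\tilde{f}^*\otimes\tilde{f}^*$ at the level of sections.
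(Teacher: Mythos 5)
Your proposal is correct and follows essentially the same route as the paper's own proof: both unwind the $(f^{-1},\tilde{f}^*\otimes\tilde{f}^*)$-compatibility to the pointwise identity $g_1(f^{-1}(z))(v_1,v_2)=g_2(z)(\tilde{f}(v_1),\tilde{f}(v_2))$ and observe, using invertibility of $f$, that this is literally the compatibility condition for the pseudo-metrics. Your version merely spells out the role-assignment in the $(f,g)$-compatibility definition and the pullback interpretation of $\tilde{f}^*\otimes\tilde{f}^*$ in more detail than the paper does.
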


\begin{proof}
Being $(f^{-1},\tilde{f}^*\otimes\tilde{f}^*)$-compatible for $g_1$ and $g_2$ means $g_1\circ f^{-1}=(\tilde{f}^*\otimes\tilde{f}^*)\circ g_2$ wherever defined, and it is obvious that, considered pointwise, this 
equality amounts to $g_1(f^{-1}(y'))(v_1,v_2)=g_2(y')(\tilde{f}(v_1),\tilde{f}(v_2))$. Since $f$ is invertible, this is the same as the above definition of compatibility for $g_1$ and $g_2$.
\end{proof}

Thus, it does make sense to speak of $\mathcal{F}_{V_2^*\otimes V_2^*,V_1^*\otimes V_1^*}(g_2,g_1)$, \emph{i.e.}, the map $g_2\cup_{(f^{-1},\tilde{f}^*\otimes\tilde{f}^*)}g_1$, and we discuss it in the section 
that immediately follows. However, \emph{a priori} it is not a pseudo-metric (in fact, it does not even have the form of one).

\subsection{The map $g_2\cup_{(f^{-1},\tilde{f}^*\otimes\tilde{f}^*)}g_1$, and the switch map $\varphi_{X_1\leftrightarrow X_2}$}

What the concluding sentence of the previous section means precisely, is that the map $g_2\cup_{(f^{-1},\tilde{f}^*\otimes\tilde{f}^*)}g_1$ is defined  between the following spaces:
$$X_2\cup_{f^{-1}}X_1\to(V_2^*\otimes V_2^*)\cup_{\tilde{f}^*\otimes\tilde{f}^*}(V_1^*\otimes V_1^*).$$ Thus, it is not a pseudo-metric, at least not formally; to become one, it must be pre- and post-composed 
with appropriate diffeomorphisms, if such exist.

Indeed, a pseudo-metric on the pseudo-bundle $\pi_1\cup_{(\tilde{f},f)}\pi_2:V_1\cup_{\tilde{f}}V_2\to X_1\cup_f X_2$ is a map of form
$$X_1\cup_f X_2\to(V_1\cup_{\tilde{f}}V_2)^*\otimes(V_1\cup_{\tilde{f}}V_2)^*.$$ Now, since $f$ is a diffeomorphism of its domain with its image, there is a natural diffeomorphism between $X_1\cup_f X_2$ 
and $X_2\cup_{f^{-1}}X_1$, so the real issue regards the existence of a diffeomorphism between $(V_1\cup_{\tilde{f}}V_2)^*\otimes(V_1\cup_{\tilde{f}}V_2)^*$ and
$(V_2^*\otimes V_2^*)\cup_{\tilde{f}^*\otimes\tilde{f}^*}(V_1^*\otimes V_1^*)$. We refer to its existence as the \emph{(gluing-dual) commutativity condition} and state it in a precise form in the section that 
follows. Prior to doing that, we need one further notion.

Specifically, recall the switch map $\varphi$, defined immediately prior to Proposition \ref{diffeological:dual:is:standard:dual:prop}. We now introduce its more general counterpart.

\begin{defn}
Let $X_1$ and $X_2$ be two diffeological spaces, and let $f:X_1\supset Y\to X_2$ be a map that is a diffeomorphism with its image. The \textbf{switch map} 
$$\varphi_{X_1\leftrightarrow X_2}:X_1\cup_f X_2\to X_2\cup_{f^{-1}}X_1$$ is the map satisfying the following conditions: $(\varphi_{X_1\leftrightarrow X_2})\circ i_1'$, where $i_1':X_1\hookrightarrow(X_1\sqcup X_2)\to X_1\cup_f X_2$, coincides with the composition $X_1\hookrightarrow(X_2\sqcup X_1)\to X_2\cup_{f^{-1}}X_1$, while $(\varphi_{X_1\leftrightarrow X_2})\circ i_2$ 
coincides with the composition$X_2\hookrightarrow(X_2\sqcup X_1)\to X_2\cup_{f^{-1}}X_1$.
\end{defn}

The following lemma is quite obvious.

\begin{lemma}
The switch map $\varphi_{X_1\leftrightarrow X_2}$ is uniquely defined and is a diffeomorphism.
\end{lemma}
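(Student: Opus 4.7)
The plan is to build $\varphi_{X_1\leftrightarrow X_2}$ by descent from the canonical set bijection $\sigma:X_1\sqcup X_2\to X_2\sqcup X_1$ that just swaps the two summands, and then check the four properties separately: existence, uniqueness, bijectivity, and bi-smoothness. The whole argument is a routine application of the universal properties of the sum and quotient diffeologies, once consistency on the gluing overlap is settled.

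First, I would settle uniqueness and well-definedness at the set level. The images $i_1'(X_1)$ and $i_2(X_2)$ cover $X_1\cup_f X_2$ (\emph{cf.} Lemma \ref{describe:plots:gluing:diffeol:lem}), and any map prescribed on both images is determined; the only thing to verify is agreement on the overlap. For $y\in Y$, the points $i_1'(y)$ and $i_2(f(y))$ are the same point of $X_1\cup_f X_2$, while the two defining conditions send them respectively to the class of $y\in X_1\subset X_2\sqcup X_1$ and the class of $f(y)\in X_2\subset X_2\sqcup X_1$ in $X_2\cup_{f^{-1}}X_1$. But in the target these classes are identified, since the defining equivalence relation there reads $f(y)\sim f^{-1}(f(y))=y$. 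So the two prescriptions coincide on the overlap, which simultaneously yields existence and uniqueness of $\varphi_{X_1\leftrightarrow X_2}$ as a set map. Bijectivity is then immediate: the same construction performed with $(X_2,X_1,f^{-1})$ in place of $(X_1,X_2,f)$ produces a set map $X_2\cup_{f^{-1}}X_1\to X_1\cup_f X_2$ whose composition with $\varphi_{X_1\leftrightarrow X_2}$ is the identity on $i_1'(X_1)$ and on $i_2(X_2)$, hence is the identity.

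Second, I would verify smoothness via the two universal properties. Since $X_1\cup_f X_2$ carries the pushforward of the sum diffeology on $X_1\sqcup X_2$, the map $\varphi_{X_1\leftrightarrow X_2}$ is smooth if and only if its pre-composition with the quotient projection $\pi_\sqcup:X_1\sqcup X_2\to X_1\cup_f X_2$ is smooth; and by construction that pre-composition is exactly the canonical swap $\sigma$ followed by the quotient $X_2\sqcup X_1\to X_2\cup_{f^{-1}}X_1$. By the universal property of the sum diffeology, this composition is smooth as soon as its restrictions to $X_1$ and to $X_2$ are smooth, and those restrictions are precisely the maps $X_1\hookrightarrow(X_2\sqcup X_1)\to X_2\cup_{f^{-1}}X_1$ and $X_2\hookrightarrow(X_2\sqcup X_1)\to X_2\cup_{f^{-1}}X_1$ appearing in the defining conditions of the switch map; both are smooth as compositions of the canonical (smooth) injection into a sum with the (smooth) quotient projection.

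Third, the smoothness of the inverse is established by an identical argument after interchanging the roles of $X_1$ and $X_2$ and of $f$ and $f^{-1}$. The only step requiring any care at all is the compatibility check on the overlap in the first paragraph; everything else is a mechanical unwinding of the definitions of sum, quotient, and gluing diffeology, so I expect no real obstacle.
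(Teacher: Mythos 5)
Your proof is correct. The paper in fact gives no argument at all for this lemma (it is introduced with the remark that it is ``quite obvious''), and what you have written is precisely the routine verification the author presumably has in mind: the consistency check $y\sim f(y)$ versus $f(y)\sim f^{-1}(f(y))=y$ on the overlap gives existence and uniqueness, and the universal properties of the quotient (pushforward) and sum diffeologies give smoothness of the map and, by symmetry in $(X_1,X_2,f)\leftrightarrow(X_2,X_1,f^{-1})$, of its inverse.
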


\subsection{The gluing-dual commutativity condition}

This issue has already been discussed in the section 2.4; we now simply give its formulation appropriate for the current context. Essentially, the commutativity condition is satisfied if
$$(V_1\cup_{\tilde{f}}V_2)^*=V_2^*\cup_{\tilde{f}^*}V_1^*.$$ We have discussed already the fact that essentially, this condition must be stated in this form and cannot be replaced by a simpler version. Thus, 
when we say that \textbf{the (gluing-dual) commutativity condition is fulfilled} (or just, commutativity condition, when it is clear from the context which commutativity we are referring to) when we are guaranteed 
the existence of a diffeomorphism
$$\psi_{\cup\leftrightarrow *}:(V_1\cup_{\tilde{f}}V_2)^*\to V_2^*\cup_{\tilde{f}^*}V_1^*$$ that covers the switch map $\varphi_{X_1\leftrightarrow X_2}$.

\subsection{The commutative case}

In this section we assume that the gluing-dual commutativity condition is satisfied; it is then quite easy to see that there is a canonically defined pseudo-metric on the glued bundle, induced by any pair of 
compatible pseudo-metrics on the factors. Furthermore, this allows for the existence of a smooth map between the relevant spaces of pseudo-metrics, as explained below.

\subsubsection{Constructing the induced pseudo-metric}

Even without the assumption of gluing-dual commutativity, we can always obtain the following map:
$$g_2\cup_{(f^{-1},\tilde{f}^*\otimes\tilde{f}^*)}g_1:X_2\cup_{f^{-1}}X_1\to(V_2^*\otimes V_2^*)\cup_{\tilde{f}^*\otimes\tilde{f}^*}(V_1^*\otimes V_1^*).$$ As has been explained above, since the gluing 
commutes with tensor product, we have the following equality for the range of this map:
$$(V_2^*\otimes V_2^*)\cup_{\tilde{f}^*\otimes\tilde{f}^*}(V_1^*\otimes V_1^*)=(V_2^*\cup_{\tilde{f}^*}V_1^*)\otimes(V_2^*\cup_{\tilde{f}^*}V_1^*);$$ this is always true. But by the assumptions in the case we 
are considering, we also have
$$(V_2^*\cup_{\tilde{f}^*}V_1^*)\otimes(V_2^*\cup_{\tilde{f}^*}V_1^*)=(V_1\cup_{\tilde{f}}V_2)^*\otimes(V_1\cup_{\tilde{f}}V_2)^*.$$ Thus, we obtain a diffeomorphism
$$\Psi_{\cup,*}:(V_2^*\otimes V_2^*)\cup_{\tilde{f}^*\otimes\tilde{f}^*}(V_1^*\otimes V_1^*)\to(V_1\cup_{\tilde{f}}V_2)^*\otimes(V_1\cup_{\tilde{f}}V_2)^*.$$
Finally, we define
$$\tilde{g}:X_1\cup_f X_2\to(V_1\cup_{\tilde{f}}V_2)^*\otimes(V_1\cup_{\tilde{f}}V_2)^*$$ by setting
$$\tilde{g}=\Psi_{\cup,*}\circ(g_2\cup_{(f^{-1},\tilde{f}^*\otimes\tilde{f}^*)}g_1)\circ\varphi_{X_1\leftrightarrow X_2}.$$

\begin{thm}\label{glued:pseudo:metric:commutative:thm}
Let $\pi_1:V_1\to X_1$ and $\pi_2:V_2\to X_2$ be two finite-dimensional diffeological vector pseudo-bundles, and let $(\tilde{f},f)$ be a gluing between them given by a smooth invertible 
$f:X_1\supset Y\to X_2$ and its smooth fibrewise linear lift $\tilde{f}$. Suppose that $(\pi_2^{-1}(f(Y))^*$ and $(\pi_1^{-1}(Y))^*$ are fibrewise diffeomorphic. Finally, assume that for $i=1,2$ there exists a 
pseudo-metric $g_i$ on $X_i$ such that $g_1$ and $g_2$ are $(f,\tilde{f})$-compatible. Then the map
$$\tilde{g}=\Psi_{\cup,*}\circ(g_2\cup_{(f^{-1},\tilde{f}^*\otimes\tilde{f}^*)}g_1)\circ\varphi_{X_1\leftrightarrow X_2},$$
is a pseudo-metric on the pseudo-bundle $\pi_1\cup_{(\tilde{f},f)}\pi_2:V_1\cup_{\tilde{f}}V_2\to X_1\cup_f X_2$.
\end{thm}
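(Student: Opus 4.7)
The plan is to verify, in turn, that $\tilde{g}$ is smooth, that it is a section of $(V_1\cup_{\tilde f}V_2)^*\otimes(V_1\cup_{\tilde f}V_2)^*$ over $X_1\cup_f X_2$, and finally that at each point the bilinear form $\tilde{g}(x)$ is symmetric, positive semi-definite, and of rank equal to $\dim((\pi_1\cup_{(\tilde f,f)}\pi_2)^{-1}(x))^*$. Smoothness is almost immediate: the switch map $\varphi_{X_1\leftrightarrow X_2}$ is a diffeomorphism by the preceding lemma; the middle map $g_2\cup_{(f^{-1},\tilde f^*\otimes\tilde f^*)}g_1$ is smooth by the proposition of Section \ref{f-g:gluing:of:maps:sect} (applied to the $(f^{-1},\tilde f^*\otimes\tilde f^*)$-compatible pair $(g_2,g_1)$, the compatibility being guaranteed by the lemma preceding the theorem); and $\Psi_{\cup,*}$ is the composition of the canonical diffeomorphism between $(V_2^*\otimes V_2^*)\cup_{\tilde f^*\otimes\tilde f^*}(V_1^*\otimes V_1^*)$ and $(V_2^*\cup_{\tilde f^*}V_1^*)\otimes(V_2^*\cup_{\tilde f^*}V_1^*)$ furnished by the tensor-product/gluing commutativity theorem with the diffeomorphism $\psi_{\cup\leftrightarrow *}\otimes\psi_{\cup\leftrightarrow *}$ coming from the assumed gluing-dual commutativity.

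That $\tilde g$ is a section over $X_1\cup_f X_2$ (that is, that it covers the identity map on the base) follows by tracing the bases through the composition: $\varphi_{X_1\leftrightarrow X_2}$ identifies $X_1\cup_f X_2$ with $X_2\cup_{f^{-1}}X_1$; the sections-gluing map covers the identity on the latter; and $\Psi_{\cup,*}$ covers, under that identification, the identity on $X_1\cup_f X_2$. The pointwise verification then splits into three cases, exactly parallel to the three cases in the proof of the tensor-gluing commutativity theorem. For $x\in i_1(X_1\setminus Y)$, the fibre of $V_1\cup_{\tilde f}V_2$ at $x$ is $\pi_1^{-1}(x)$, and tracing the composition shows that $\tilde g(x)$ is $g_1(x)$ under the canonical identification of $(\pi_1^{-1}(x))^*\otimes(\pi_1^{-1}(x))^*$ with the corresponding fibre of $(V_1\cup_{\tilde f}V_2)^*\otimes(V_1\cup_{\tilde f}V_2)^*$; hence symmetry, positive semi-definiteness, and maximal rank are inherited from the fact that $g_1$ is a pseudo-metric. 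The case $x\in i_2(X_2\setminus f(Y))$ is entirely symmetric and yields $\tilde g(x)=g_2(x)$.

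The remaining case, $x\in Y$ identified with $f(y)\in f(Y)$, is where the commutativity hypothesis does the essential work. The fibre of $V_1\cup_{\tilde f}V_2$ at $x$ is the target space of $\tilde f$, namely $\pi_2^{-1}(f(y))$; the switch map sends $x$ to the corresponding class in $X_2\cup_{f^{-1}}X_1$, and the sections-gluing map takes value $g_2(f(y))\in(\pi_2^{-1}(f(y)))^*\otimes(\pi_2^{-1}(f(y)))^*$ there (because the gluing on the ranges is through $\tilde f^*\otimes\tilde f^*$, whose target side is the $V_2^*\otimes V_2^*$-component). The assumed fibrewise diffeomorphism $(\pi_2^{-1}(f(y)))^*\cong(\pi_1^{-1}(y))^*$ then forces the target fibre of $\Psi_{\cup,*}$ at $x$ to coincide with $(\pi_2^{-1}(f(y)))^*\otimes(\pi_2^{-1}(f(y)))^*$, so $\tilde g(x)=g_2(f(y))$, which is a pseudo-metric on $\pi_2^{-1}(f(y))$ by hypothesis; the $(f,\tilde f)$-compatibility of $g_1$ and $g_2$ ensures that there is no ambiguity between the two possible descriptions of $\tilde g(x)$. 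I expect the main obstacle to be exactly this third case: one must keep straight how $\Psi_{\cup,*}$ ties together the two distinct constructions $(V_1\cup_{\tilde f}V_2)^*$ and $V_2^*\cup_{\tilde f^*}V_1^*$, and recognise that the commutativity assumption is precisely what is needed for the target fibre to have the correct dimension so that the rank condition carries over.
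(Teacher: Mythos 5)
Your proposal is correct and follows essentially the same route as the paper's own proof: smoothness is reduced to the smoothness of the glued section $g_2\cup_{(f^{-1},\tilde f^*\otimes\tilde f^*)}g_1$ sandwiched between the diffeomorphisms $\varphi_{X_1\leftrightarrow X_2}$ and $\Psi_{\cup,*}$, and the symmetry/rank conditions are checked fibrewise, with the only nontrivial case being over the gluing locus, where compatibility identifies the value with $g_2(f(y))$ and the commutativity hypothesis guarantees the target fibre is $(\pi_2^{-1}(f(y)))^*\otimes(\pi_2^{-1}(f(y)))^*$ so that the rank is maximal. Your write-up is somewhat more explicit than the paper's (three cases instead of two, and an explicit decomposition of $\Psi_{\cup,*}$), but the substance is the same.
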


\begin{proof}
That $\tilde{g}(x)$ is a bilinear symmetric form on the fibre over each $x\in X_1\cup_f X_2$ follows from the construction (and is explained prior to the statement of the theorem); we should check that 
$\tilde{g}$ is smooth, and that $\tilde{g}(x)$ has the maximal possible rank for the corresponding fibre. Since $\Psi_{\cup,*}$ and $\varphi_{X_1\leftrightarrow X_2}$ are diffeomorphisms, this follows from the 
corresponding properties of $g_2\cup_{(f^{-1},\tilde{f}^*\otimes\tilde{f}^*)}g_1$. Now, the latter map is smooth, for the gluing diffeologies on its domain and co-domain, because because it is obtained by
diffeological gluing of two smooth maps. Its rank should be checked only over the domain of gluing, otherwise it coincides with that of either $g_1$ or $g_2$ (and so is indeed the maximal possible for the
fibre in question).

Let $y\in i_2(f(Y))\subset V_1\cup_{\tilde{f}}V_2$; the fibre $(\pi_1\cup_{(\tilde{f},f)}\pi_2)^{-1}(y)$ is by construction (diffeomorphic to) $\pi_2^{-1}(i_2^{-1}(y))$. The value
$\tilde{g}(y)\in(V_1\cup_{\tilde{f}}V_2)^*\otimes(V_1\cup_{\tilde{f}}V_2)^*$ is obtained as $\tilde{g}(y)=\Psi_{\cup,*}(g_2(i_2^{-1}(y)))=\Psi_{\cup,*}(g_1(i_1^{-1}(f^{-1}(y))))$ (notice that, since $f$ is invertible, 
we extend $i_1$ to the induction $X_1\hookrightarrow X_1\cup_f X_2$ in an obvious way). Notice that $g_2(i_2^{-1}(y))$ is formally an element of $V_2^*\otimes V_2^*$ belonging to the fibre over 
$i_2^{-1}(y)$. It however represents an element of the domain of definition $(V_2^*\otimes V_2^*)\cup_{\tilde{f}^*\otimes\tilde{f^*}}(V_1^*\otimes V_1^*)$ of $\Psi_{\cup,*}$, where it is identified to
$(\tilde{f}^*\otimes\tilde{f}^*)(g_1(i_1^{-1}(f^{-1}(y))))$; this is well-defined by the compatibility of $g_1$ and $g_2$.

Finally, since $\Psi_{\cup,*}$ is a diffeomorphism, we can deduce that the rank of $\tilde{g}(y)$ is equal to that of $g_2(i_2^{-1}(y))$. Since the latter is by assumption a pseudo-metric on 
$\pi_2^{-1}(i_2^{-1}(y))$ (in the sense of diffeological vector spaces, of course), it achieves the maximal rank possible on the space in question, that is, the rank equal to $\dim((\pi_2^{-1}(i_2^{-1}(y)))^*)$.
\end{proof}

\subsection{The noncommutative case}

The absence of commutativity between the operation of gluing and that of taking the dual pseudo-bundle does not necessarily preclude the existence of an induced pseudo-metric on the result of a given
gluing. Indeed, one can define such directly, as we do below, using the fact that every fibre of $V_1\cup_{\tilde{f}}V_2$ is identical to one of either $V_1$ or $V_2$, and thus, by construction, the analogous 
statement is true for every fibre of $(V_1\cup_{\tilde{f}}V_2)^*\otimes(V_1\cup_{\tilde{f}}V_2)^*$.

Let us now describe the pseudo-metric thus obtained. Suppose that we are given two diffeological vector pseudo-bundles $\pi_1:V_1\to X_1$ and $\pi_2:V_2\to X_2$, a gluing of the former to the latter along
an appropriate pair $(\tilde{f},f)$ of maps, and two compatible pseudo-metrics $g_1$ and $g_2$ on $V_1$ and $V_2$ respectively. We now define a section
$$\tilde{g}:X_1\cup_f X_2\to(V_1\cup_{\tilde{f}}V_2)^*\otimes(V_1\cup_{\tilde{f}}V_2)^*$$ by setting its value on $x_1\in i_1(X_1\setminus Y)$ to be
$$\tilde{g}(x_1):=g_1(x_1)\circ(j_1^{-1}\otimes j_1^{-1})\in (V_1\cup_{\tilde{f}}V_2)^*\otimes(V_1\cup_{\tilde{f}}V_2)^*,$$ and then
$$\tilde{g}(x_2):=g_2(x_2)\circ(j_2^{-1}\otimes j_2^{-1})\in (V_1\cup_{\tilde{f}}V_2)^*\otimes(V_1\cup_{\tilde{f}}V_2)^*$$ for $x_2\in i_2(X_2)$. The map $\tilde{g}$ is thus well-defined (simply because $i_1$ 
and $i_2$ are inductions whose images form a disjoint cover of $X_1\cup_f X_2$, and the same holds for $j_1,j_2$ and $V_1\cup_{\tilde{f}}V_2$), and pointwise produces a pseudo-metric on the relevant 
fibre. It remains to show that $\tilde{g}$ is smooth as a map $X_1\cup_f X_2\to(V_1\cup_{\tilde{f}}V_2)^*\otimes(V_1\cup_{\tilde{f}}V_2)^*$.

\begin{thm}\label{glued:pseudo:metric:noncommutative:thm}
Let $\pi_1:V_1\to X_1$ and $\pi_2:V_2\to X_2$ be two finite-dimensional diffeological vector pseudo-bundles, let $(\tilde{f},f)$ be a gluing between them, and let $g_1$ and $g_2$ be pseudo-metrics on 
$V_1$ and, respectively, $V_2$ compatible with respect to the gluing. Define $\tilde{g}:X_1\cup_f X_2\to(V_1\cup_{\tilde{f}}V_2)^*\otimes(V_1\cup_{\tilde{f}}V_2)^*$ as
$$\tilde{g}(x)=\left\{\begin{array}{ll}
((j_1^{-1})^*\otimes(j_1^{-1})^*)\circ g_1(x) & \mbox{for }x\in i_1(X_1\setminus Y) \\
((j_2^{-1})^*\otimes(j_2^{-1})^*)\circ g_2(x) & \mbox{for }x\in i_2(X_2).\end{array}\right.$$ Then $\tilde{g}$ is a pseudo-metric on the pseudo-bundle
$\pi_1\cup_{(\tilde{f},f)}\pi_2:V_1\cup_{\tilde{f}}V_2\to X_1\cup_f X_2$.
\end{thm}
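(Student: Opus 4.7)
My plan is to split the verification of Theorem \ref{glued:pseudo:metric:noncommutative:thm} into the fibrewise algebraic properties of $\tilde{g}$, which are straightforward, and the global smoothness, which is the technical core. First, by Lemma \ref{describe:plots:gluing:diffeol:lem} the images $i_1(X_1\setminus Y)$ and $i_2(X_2)$ are disjoint and cover $X_1\cup_f X_2$, and the analogous statement holds for $j_1,j_2$ in $V_1\cup_{\tilde{f}}V_2$, so $\tilde{g}$ is well-defined by its two-branch formula. At any $x\in i_1(X_1\setminus Y)$ the induction $j_1$ restricts to a linear diffeomorphism from $\pi_1^{-1}(i_1^{-1}(x))$ onto the fibre of $V_1\cup_{\tilde{f}}V_2$ at $x$, and similarly for $j_2$ in the other case. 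Pulling $g_1(i_1^{-1}(x))$ back along $j_1^{-1}\otimes j_1^{-1}$ (resp.\ $g_2$ back along $j_2^{-1}\otimes j_2^{-1}$) therefore yields a symmetric positive-semidefinite form whose rank equals the dimension of the dual of the corresponding fibre of $V_1$ or $V_2$, which via the same fibrewise diffeomorphism equals $\dim\bigl(((V_1\cup_{\tilde{f}}V_2)_x)^*\bigr)$; hence $\tilde{g}(x)$ is a pseudo-metric on each glued fibre.

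Next, to establish smoothness I would take an arbitrary plot $p:U\to X_1\cup_f X_2$ and, after shrinking $U$ using Lemma \ref{describe:plots:gluing:diffeol:lem}, reduce to one of two cases: (a) $p=i_2\circ p_2$ for some plot $p_2:U\to X_2$, or (b) $p$ is assembled from a plot $p_1:U\to X_1$ of $X_1$ via the two-branch formula of that lemma. Using the evaluation-type characterization of plots of dual and tensor-product pseudo-bundles recalled in Section~1.3, it is enough to test $\tilde{g}\circ p$ against arbitrary pairs of plots $q,q':U'\to V_1\cup_{\tilde{f}}V_2$ with matching projection to $p$. In case (a), on the fibred subset of $(u,u')$ any such $q$ is forced by projection-matching to lift to a plot $q_2$ of $V_2$, so the test evaluation collapses to $g_2(p_2(u))(q_2(u'),q_2'(u'))$ and is smooth because $g_2$ is. Case (b) partitions into $U\cap p_1^{-1}(X_1\setminus Y)$, on which the $g_1$-branch of $\tilde{g}$ is active and test plots lift to $V_1$, and $U\cap p_1^{-1}(Y)$, on which the $g_2$-branch is active and test plots lift to $V_2$; smoothness on each sub-piece is established by the same evaluation argument as in case (a).

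The main technical obstacle is to show that the two sub-expressions in case (b) assemble into a single smooth map across all of $U$, rather than being smooth only piecewise. This is precisely where the $(f,\tilde{f})$-compatibility of $g_1$ and $g_2$ is used. At $u$ with $p_1(u)=y\in Y$ one has $p(u)=i_2(f(y))$ and the identity $j_1=j_2\circ\tilde{f}$ on $\pi_1^{-1}(Y)$, so if $q_1,q_1'$ are the $V_1$-lifts of $q,q'$ then $\tilde{f}\circ q_1,\tilde{f}\circ q_1'$ are their $V_2$-lifts on the overlap, and the compatibility hypothesis $g_1(y)(v_1,v_2)=g_2(f(y))(\tilde{f}(v_1),\tilde{f}(v_2))$ gives
$$g_2(f(p_1(u)))\bigl(\tilde{f}(q_1(u')),\tilde{f}(q_1'(u'))\bigr)=g_1(p_1(u))\bigl(q_1(u'),q_1'(u')\bigr).$$
Thus the two branches of $\tilde{g}$ compose with $p$ to give the single smooth expression $(u,u')\mapsto g_1(p_1(u))(q_1(u'),q_1'(u'))$ on the entire relevant fibred subset, completing the verification that $\tilde{g}\circ p$ is a plot and hence that $\tilde{g}$ is smooth.
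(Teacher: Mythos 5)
Your proposal is correct and follows essentially the same route as the paper's proof: lift the base plot locally to $X_1$ or $X_2$, test $\tilde{g}\circ p$ by evaluation against pairs of plots of $V_1\cup_{\tilde{f}}V_2$, and use the $(f,\tilde{f})$-compatibility to collapse the two branches over a $V_1$-lifted plot into the single smooth expression $g_1(p_1(u))(q_1(u'),q_1'(u'))$. The only difference is that you spell out the fibrewise algebraic verification (symmetry, rank via the linear diffeomorphisms induced by $j_1,j_2$) inside the proof, whereas the paper dispatches it in the discussion preceding the theorem.
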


\begin{proof}
Consider a plot $p:U\to X_1\cup_f X_2$ of $X_1\cup_f X_2$. By definition of the gluing diffeology such a plot locally lifts either to a plot $p_2:U\to X_2$ of $X_2$, and then we have $p=i_2\circ p_2$, or to a 
plot $p_1:U\to X_1$ of $X_1$ and then $p$ has the following form:
$$p(u)=\left\{\begin{array}{ll}
i_1(p_1(u)) & \mbox{for }u\in U\mbox{ such that }p_1(u)\in(X_1\setminus Y),\\
i_2(f(p_1(u))) & \mbox{for }u\in U\mbox{ such that }p_1(u)\in Y.\end{array}\right.$$ We need to show that the composition $\tilde{g}(p(u))$ is a plot of the tensor product
$(V_1\cup_{\tilde{f}}V_2)^*\otimes(V_1\cup_{\tilde{f}}V_2)^*$.

By the definition of the tensor product diffeology and that of the gluing diffeology, we need to show that its evaluation
$$(u,u')\mapsto\tilde{g}(p(u))(q(u'),s(u'))$$ on a pair of plots $q,s:U'\to V_1\cup_{\tilde{f}}V_2$ of $V_1\cup_{\tilde{f}}V_2$ (with $q(u'),s(u')$ belonging to the same fibre for all $u'$) is a smooth function on its 
domain of definition. This domain of definition is a subset of $U\times U'$ consisting precisely of those pairs $(u,u')$ for which $(\pi_1\cup_{(\tilde{f},f)}\pi_2)(q(u'))=(\pi_1\cup_{(\tilde{f},f)}\pi_2)(s(u'))=p(u)$
and carries the subset diffeology relative to this inclusion. Furthermore, the assumption that $q$ and $s$ are plots of $V_1\cup_{\tilde{f}}V_2$ means the following (we give an explicit description for $q$ only). 
As it happens for all plots of a gluing diffeology, $q$ locally lifts either to a plot $q_2$ of $V_2$, in which case $q=j_2\circ q_2$, or to a plot $q_1$ of $V_1$, and then it has the following form:
$$q(u')=\left\{\begin{array}{ll} j_1(q_1(u')) & \mbox{if }\pi_1(q_1(u'))\in X_1\setminus Y,\\
j_2(\tilde{f}(q_1(u'))) & \mbox{if }\pi_1(q_1(u'))\in Y. \end{array}\right.$$

Suppose first that $p$ lifts to a plot $p_1$ (the second case); consider the evaluation $(u,u')\mapsto\tilde{g}(p(u))(q(u'),s(u'))$. We have
$$\tilde{g}(p(u))(q(u'),s(u'))=\left\{\begin{array}{ll} g_1(p_1(u))(q_1(u'),s_1(u')) & \mbox{if }p_1(u)\in X_1\setminus Y,\\
g_2(f(p_1(u)))(\tilde{f}(q_1(u')),\tilde{f}(s_1(u'))) & \mbox{if }p_1(u)\in Y.\end{array}\right.$$ Now, the first expression is the evaluation of $g_1\circ p_1$ at a pair of plots $q_1,s_1$ of $V_1$, while the second 
one equals to
$$g_2(f(p_1(u)))(\tilde{f}(q_1(u')),\tilde{f}(s_1(u')))=g_1(p_1(u))(q_1(u'),s_1(u'))$$ by the compatibility condition. Thus, $\tilde{g}(p(u))(q(u'),s(u'))$ coincides, for all $(u,u')$ for which it is defined, with the
evaluation of $g_1\circ p_1$ on a pair of plots $q_1,s_1$ of $V_1$, so it is a smooth function by the smoothness of $g_1$.

Finally, if we suppose that $p$ lifts to a plot $p_2$ of $X_2$, then in the domain of the evaluation function, the two plots $q$ and $s$ must also lift to plots $q_2$ and $s_2$ of $V_2$ (not those of $V_1$). 
Therefore the result of the evaluation function is 
$$\tilde{g}(p(u))(q(u'),s(u'))=g_2(p_2(u))(q_2(u'),s_2(u')).$$ Since the latter coincides with the value of the evaluation function for the pseudo-metric $g_2$ on plots $p_2$, $q_2$, and $s_2$, we conclude 
that $(u,u')\mapsto\tilde{g}(p(u))(q(u'),s(u'))$ is smooth, whence the final conclusion of the theorem.
\end{proof}

\subsection{Comparison of commutative and noncommutative cases}

It is natural to wonder at this point whether the two versions of the pseudo-metric $\tilde{g}$ on $\pi_1\cup_{(\tilde{f},f)}\pi_2:V_1\cup_{\tilde{f}}V_2\to X_1\cup_f X_2$ provided respectively by Theorem
\ref{glued:pseudo:metric:commutative:thm} and by Theorem \ref{glued:pseudo:metric:noncommutative:thm}, coincide in the case when both are applicable (that is, in the commutative case). We now show 
that they always do (this is a direct consequence of the construction).

\begin{thm}\label{two:pseudo:metrics:coincide:when:defined:thm}
Let $\pi_i:V_i\to X_i$ for $i=1,2$ be two diffeological vector pseudo-bundles, and let $(\tilde{f},f)$ be a usual pair of maps defining gluing of $V_1$ to $V_2$. Suppose that there is a diffeomorphism 
$(V_1\cup_{\tilde{f}}V_2)^*\to V_2^*\cup_{\tilde{f}^*}V_1^*$ covering the switch map $\varphi_{X_1\leftrightarrow X_2}$. Finally, let $g_i$ be a pseudo-metric on $\pi_i:V_i\to X_i$ and assume that $g_1$ and 
$g_2$ are $(f,\tilde{f})$-compatible. Then the commutative and the noncommutative versions of $\tilde{g}$ yield the same pseudo-metric on 
$\pi_1\cup_{(\tilde{f},f)}\pi_2:V_1\cup_{\tilde{f}}V_2\to X_1\cup_f X_2$.
\end{thm}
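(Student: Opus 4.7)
The plan is to verify pointwise that the two definitions of $\tilde{g}$ give the same value at every $x \in X_1\cup_f X_2$. Both definitions produce, at each point, a symmetric bilinear form on the corresponding fibre of $V_1\cup_{\tilde{f}}V_2$, so it suffices to check the two are equal as forms on each fibre. Using the observation (stated before Section \ref{natural:inductions:sect}) that the images $i_1(X_1\setminus Y)$ and $i_2(X_2)$ are disjoint and cover $X_1\cup_f X_2$, I would split the verification into these two cases, which are essentially symmetric.

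First I would treat the case $x = i_1(x_1)$ with $x_1 \in X_1\setminus Y$. Unwinding the commutative formula, $\varphi_{X_1\leftrightarrow X_2}(x)$ lies in the inclusion of $X_1$ inside $X_2\cup_{f^{-1}}X_1$; by the very definition of $g_2\cup_{(f^{-1},\tilde{f}^*\otimes\tilde{f}^*)}g_1$ in Section \ref{f-g:gluing:of:maps:sect}, its value there is $g_1(x_1)\in V_1^*\otimes V_1^*$, viewed as an element of $(V_2^*\otimes V_2^*)\cup_{\tilde{f}^*\otimes\tilde{f}^*}(V_1^*\otimes V_1^*)$ via the inclusion of the $V_1^*\otimes V_1^*$-factor outside the domain of gluing. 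Then I would chase this element through $\Psi_{\cup,*}$: the fibre of $(V_2^*\otimes V_2^*)\cup_{\tilde{f}^*\otimes\tilde{f}^*}(V_1^*\otimes V_1^*)$ over $x_1$ is canonically $\pi_1^{-1}(x_1)^*\otimes\pi_1^{-1}(x_1)^*$, and the fibre of $(V_1\cup_{\tilde{f}}V_2)^*\otimes(V_1\cup_{\tilde{f}}V_2)^*$ over $i_1(x_1)$ is $((\pi_1\cup_{(\tilde{f},f)}\pi_2)^{-1}(i_1(x_1)))^*\otimes((\pi_1\cup_{(\tilde{f},f)}\pi_2)^{-1}(i_1(x_1)))^*$, and these fibres are identified through the induction $j_1$. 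This identification is precisely the map $(j_1^{-1})^*\otimes(j_1^{-1})^*$, yielding $\tilde{g}_{\mathrm{comm}}(x) = ((j_1^{-1})^*\otimes(j_1^{-1})^*)(g_1(x_1))$, which agrees with the noncommutative definition.

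The case $x = i_2(x_2)$ with $x_2 \in X_2$ proceeds analogously: $\varphi_{X_1\leftrightarrow X_2}(x)$ lies in the inclusion of $X_2$ inside $X_2\cup_{f^{-1}}X_1$; the glued section returns $g_2(x_2)$; and $\Psi_{\cup,*}$ acts on the fibre over $x_2$ by the identification $(j_2^{-1})^*\otimes(j_2^{-1})^*$, again matching the noncommutative formula. The only point requiring care (and which I would state as a small intermediate lemma if needed) is that, on the domain of gluing $Y$, the compatibility of $g_1$ and $g_2$ guarantees that the two representatives $g_1(f^{-1}(x_2))$ and $g_2(x_2)$ of the same element of $(V_2^*\otimes V_2^*)\cup_{\tilde{f}^*\otimes\tilde{f}^*}(V_1^*\otimes V_1^*)$ yield the same form on the fibre of $V_1\cup_{\tilde{f}}V_2$, so the two cases consistently glue and the map is well-defined independent of the representative.

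The main (minor) obstacle is just the bookkeeping: keeping straight which copy of each space a point sits in after applying $\varphi_{X_1\leftrightarrow X_2}$, and identifying the implicit fibrewise diffeomorphisms built into $\Psi_{\cup,*}$ with the explicit duals of the inductions $j_1,j_2$ used in Theorem \ref{glued:pseudo:metric:noncommutative:thm}. Once this identification is made explicit (which follows directly from how $\psi_{\cup\leftrightarrow *}$ is defined in Section 2.4 at the fibre level, namely by identity on fibres outside the gluing domain and by the $\tilde{f}^*$-induced identification on it), the equality of the two pseudo-metrics is immediate.
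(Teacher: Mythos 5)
Your proposal is correct and follows essentially the same route as the paper: a pointwise comparison of the two constructions over the disjoint decomposition $X_1\cup_f X_2=i_1(X_1\setminus Y)\sqcup i_2(X_2)$, unwinding the switch map and $\Psi_{\cup,*}$ to see that the commutative version reduces on each fibre to $g_1(i_1^{-1}(x))\circ(j_1^{-1}\otimes j_1^{-1})$ or $g_2(i_2^{-1}(x))\circ(j_2^{-1}\otimes j_2^{-1})$, which is the noncommutative definition. Your extra remark about the two representatives over the gluing locus agreeing via the $(f,\tilde f)$-compatibility is a point the paper leaves implicit, so it is a welcome addition rather than a deviation.
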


\begin{proof}
Let us compare the two maps pointwise, denoting the commutative version by $\tilde{g}'$ and the noncommutative one by $\tilde{g}''$. Let $x_1\in i_1(X_1\setminus Y)$, and let
$v_1,v_2\in(\pi_1\cup_{(\tilde{f},f)}\pi_2)^{-1}(x_1)$; note that $v_1,v_2\in j_1(V_1\setminus\pi_1^{-1}(Y))$. Starting with $\tilde{g}'$, we obtain
$$\tilde{g}'(x_1)(v_1,v_2)=g_1(i_1^{-1}(x_1))(j_1^{-1}(v_1),j_1^{-1}(v_2))=\left(g_1(i_1^{-1}(x_1))\circ j_1^{-1}\otimes j_1^{-1}\right)(v_1,v_2)=\tilde{g}''(x_1).$$ Similarly, if $x_2\in i_2(X_2)$ and 
$v_1,v_2\in(\pi_1\cup_{(\tilde{f},f)}\pi_2)^{-1}(x_2)$ then $v_1,v_2\in j_2(V_2)$, and we obtain
$$\tilde{g}'(x_2)(v_1,v_2)=g_2(i_2^{-1}(x_2))(j_2^{-1}(v_1),j_2^{-1}(v_2))=\left(g_2(i_2^{-1}(x_2))\circ j_2^{-1}\otimes j_2^{-1}\right)(v_1,v_2)=\tilde{g}''(x_2).$$
\end{proof}

We conclude saying that the commutative version, when defined, is in some cases more convenient, being a kind of functorial construction; the noncommutative version is more direct and more generally
applicable.

\subsection{Using the gluing construction to prove existence}

We conclude this section with observing that the gluing construction and the corresponding procedure for obtaining a pseudo-metric on the resulting pseudo-bundle (see Theorem
\ref{glued:pseudo:metric:noncommutative:thm}) might allow for proving existence of pseudo-metrics on a rather wide class of pseudo-bundles, such as all those obtained by gluing together standard bundles 
(projections of form $\matR^n=\matR^k\times\matR^{n-k}\to\matR^k$), as long as compatible pseudo-metrics can be found on the factors. Since we are not ready to give a comprehensive statement to this 
matter, we limit ourselves to this brief remark.

\section{The spaces of pseudo-metrics $\mathcal{G}(V_1,X_1)$ and $\mathcal{G}(V_2,X_2)$, and the space $\mathcal{G}(V_1\cup_{\tilde{f}}V_2,X_1\cup_f X_2)$}

For any finite-dimensional diffeological vector pseudo-bundle $\pi:V\to X$, let $\mathcal{G}(V,X)$ stand for the space of all pseudo-metrics on this pseudo-bundle. This space is naturally endowed with the 
functional diffeology, and so is a diffeological space.

The above-considered assignment of a new pseudo-metric $\tilde{g}$, for every pair of compatible pseudo-metrics $g_1$ and $g_2$, can be seen as a map defined on some subset of
$\mathcal{G}(V_1,X_1)\times\mathcal{G}(V_2,X_2)$ and taking values in $\mathcal{G}(V_1\cup_{\tilde{f}}V_2,X_1\cup_f X_2)$. In this section we consider the smoothness of this map (for the diffeologies
indicated).

\subsection{The map $\mathcal{P}$}

Let $\pi_1:V_1\to X_1$ and $\pi_2:V_2\to X_2$ be two finite-dimensional diffeological vector pseudo-bundles, let $f:X_1\supset Y\to X_2$ be a smooth map, and let $\tilde{f}:\pi_1^{-1}(Y)\to\pi_2^{-1}(f(Y))$ be 
a smooth lift of $f$ that is linear on each fibre. Suppose that $\mathcal{G}(V_i,X_i)\neq\emptyset$ for $i=1,2$, and assume that there exists at least one pair of compatible pseudo-metrics $g_1,g_2$ with 
$g_i\in\mathcal{G}(V_i,X_i)$; note that the construction of $\tilde{g}$ out of $g_1,g_2$ that is described in the previous section allows to conclude immediately that the set 
$\mathcal{G}(V_1\cup_{\tilde{f}}V_2,X_1\cup_f X_2)$ is also non-empty.

Consider now the subset $\mathcal{G}(V_1,X_1)\times_{comp}\mathcal{G}(V_2,X_2)$ of the direct product $\mathcal{G}(V_1,X_1)\times\mathcal{G}(V_2,X_2)$; this subset consists of all pairs of compatible 
pseudo-metrics and is endowed with the subset diffeology relative to the product diffeology on $\mathcal{G}(V_1,X_1)\times\mathcal{G}(V_2,X_2)$ (this product diffeology, in turn, is relative to the functional
diffeologies on $\mathcal{G}(V_1,X_1)$ and $\mathcal{G}(V_2,X_2)$).

Let us define the following map:
$$\mathcal{P}:\mathcal{G}(V_1,X_1)\times_{comp}\mathcal{G}(V_2,X_2)\to\mathcal{G}(V_1\cup_{\tilde{f}}V_2,X_1\cup_f X_2)$$ acting by
$$\mathcal{P}(g_1,g_2)=\tilde{g},$$ where $\tilde{g}$ is associated (see Theorem \ref{glued:pseudo:metric:commutative:thm}) to $g_2\cup_{(f^{-1},\tilde{f}^*\otimes\tilde{f}^*)}g_1$ in the gluing-dual 
commutative case and is given by Theorem \ref{glued:pseudo:metric:noncommutative:thm} in the corresponding non-commutative case. We shall now show that this map is smooth for the diffeologies on its 
domain and its range indicated above.

\subsection{Smoothness with the commutativity assumption}

Although the pseudo-metric $\tilde{g}$ obtained under the assumption of the gluing-dual commutativity is a partial case of the one obtained without such assumption, we do give a separate proof of the
smoothness of $\mathcal{P}$ in this case (it is quite different, and also brief).

\begin{thm}
The map $\mathcal{P}$ is smooth.
\end{thm}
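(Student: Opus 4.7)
The plan is to realize $\mathcal{P}$, in the commutative case, as a composition of three maps each of which is either a pre/post-composition with a fixed smooth map or an instance of the gluing-of-smooth-maps construction $\mathcal{F}_{Z_1,Z_2}$ from Section \ref{f-g:gluing:of:maps:sect}, and then invoke Theorem \ref{f-g-compatible:gluing:smooth:thm} together with the standard fact that pre-/post-composition with a fixed smooth map is a smooth operation on functional diffeologies.

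More concretely, I would proceed as follows. First, note that a pair $(g_1,g_2)$ of pseudo-metrics is $(f,\tilde{f})$-compatible in the sense of Section 6.1 if and only if $(g_2,g_1)$ is $(f^{-1},\tilde{f}^*\otimes\tilde{f}^*)$-compatible in the sense of Section \ref{f-g:compatibility:sect}; this was proved in Section 6.1. Hence the flip $\sigma:(g_1,g_2)\mapsto(g_2,g_1)$ is a diffeomorphism
$$\sigma:\mathcal{G}(V_1,X_1)\times_{comp}\mathcal{G}(V_2,X_2)\to\mathcal{G}(V_2,X_2)\times_{comp}\mathcal{G}(V_1,X_1)$$
between the two compatibility subsets (with the latter now referring to $(f^{-1},\tilde{f}^*\otimes\tilde{f}^*)$-compatibility). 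Composing with the inclusion of each factor into the ambient $C^\infty$-space and invoking Theorem \ref{f-g-compatible:gluing:smooth:thm} with $X_1,X_2,Z_1,Z_2$ replaced by $X_2,X_1,V_2^*\otimes V_2^*,V_1^*\otimes V_1^*$, we obtain that the map
$$\mathcal{F}:(g_2,g_1)\mapsto g_2\cup_{(f^{-1},\tilde{f}^*\otimes\tilde{f}^*)}g_1$$
is smooth into $C^\infty\bigl(X_2\cup_{f^{-1}}X_1,(V_2^*\otimes V_2^*)\cup_{\tilde{f}^*\otimes\tilde{f}^*}(V_1^*\otimes V_1^*)\bigr)$, the latter with its functional diffeology.

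Next, I would observe that the switch diffeomorphism $\varphi_{X_1\leftrightarrow X_2}:X_1\cup_f X_2\to X_2\cup_{f^{-1}}X_1$ and the diffeomorphism
$$\Psi_{\cup,*}:(V_2^*\otimes V_2^*)\cup_{\tilde{f}^*\otimes\tilde{f}^*}(V_1^*\otimes V_1^*)\to (V_1\cup_{\tilde{f}}V_2)^*\otimes(V_1\cup_{\tilde{f}}V_2)^*$$
(built from the commutativity of gluing with tensor product, already proven, and the assumed gluing-dual commutativity) are both fixed smooth maps. It is a standard property of the functional diffeology that pre-composition with a fixed smooth map $\varphi$ gives a smooth map between the associated functional diffeologies, and likewise for post-composition with a fixed smooth map $\Psi$: indeed, if $\alpha:U\to C^\infty(A,B)$ is a plot, then by the defining property of the functional diffeology the evaluation $(u,a)\mapsto\alpha(u)(a)$ is smooth, whence $(u,a')\mapsto\Psi(\alpha(u)(\varphi(a')))$ is smooth as well, which is precisely the assertion that $\Psi_*\circ\varphi^*\circ\alpha$ is a plot. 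By the defining formula prior to Theorem \ref{glued:pseudo:metric:commutative:thm},
$$\mathcal{P}=\Psi_{\cup,*\,*}\;\circ\;\bigl(\varphi_{X_1\leftrightarrow X_2}\bigr)^{*}\;\circ\;\mathcal{F}\;\circ\;\sigma,$$
a composition of smooth maps, and by Theorem \ref{glued:pseudo:metric:commutative:thm} its image lies in the pseudo-metric subspace $\mathcal{G}(V_1\cup_{\tilde{f}}V_2,X_1\cup_f X_2)$, whose diffeology is the subset diffeology of $C^\infty(X_1\cup_f X_2,(V_1\cup_{\tilde{f}}V_2)^*\otimes(V_1\cup_{\tilde{f}}V_2)^*)$. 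This yields the smoothness of $\mathcal{P}$.

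There is no real obstacle in this argument; all the substantive smoothness content was already packaged into Theorem \ref{f-g-compatible:gluing:smooth:thm} and into the existence of the fixed diffeomorphisms $\varphi_{X_1\leftrightarrow X_2}$ and $\Psi_{\cup,*}$. The only point requiring attention is the bookkeeping: one must track the reversed order in the ordered pair (which motivates the flip $\sigma$), verify that the subset diffeology on the compatibility locus is preserved by $\sigma$, and check that post-composing with $\Psi_{\cup,*}$ lands inside the space of genuine pseudo-metrics (not merely sections), which is exactly the content of Theorem \ref{glued:pseudo:metric:commutative:thm}.
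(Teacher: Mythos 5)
Your proof is correct and follows essentially the same route as the paper's: both decompose $\mathcal{P}$ into the flip of factors, the map $\mathcal{F}_{V_2^*\otimes V_2^*,V_1^*\otimes V_1^*}$ handled by Theorem \ref{f-g-compatible:gluing:smooth:thm}, and composition with the fixed diffeomorphism $\Psi_{\cup,*}$. Your version is in fact slightly more careful than the paper's in making explicit the pre-composition with the switch map $\varphi_{X_1\leftrightarrow X_2}$ needed to land in $C^{\infty}(X_1\cup_f X_2,(V_1\cup_{\tilde{f}}V_2)^*\otimes(V_1\cup_{\tilde{f}}V_2)^*)$, a step the paper leaves implicit.
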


\begin{proof}
The map $\mathcal{P}$ can be written as the composition of three maps, specifically: the map
$$\mathcal{G}(V_1,X_1)\times_{comp}\mathcal{G}(V_2,X_2)\to\mathcal{G}(V_2,X_2)\times\mathcal{G}(V_1,X_1),$$ which is just the order change within the direct product; the map
$$\mathcal{F}_{V_2^*\otimes V_2^*,V_1^*\otimes V_1^*}:C^{\infty}(X_2,V_2^*\otimes V_2^*)\times_{comp}C^{\infty}(X_1,V_1^*\otimes V_1^*)\to C^{\infty}(X_2\cup_{f^{-1}}X_1,(V_2^*\otimes
V_2^*)\cup_{\tilde{f}^*\otimes\tilde{f}^*}(V_1^*\otimes V_1^*))$$ (where the compatibility is considered with respect to the maps $f^{-1}$ and $\tilde{f}^*\otimes\tilde{f}^*$), which is a particular case of the map 
of Theorem \ref{f-g-compatible:gluing:smooth:thm}; and the map 
$$C^{\infty}(X_2\cup_{f^{-1}}X_1,(V_2^*\otimes V_2^*)\cup_{\tilde{f}^*\otimes\tilde{f}^*}(V_1^*\otimes V_1^*))\to C^{\infty}(X_2\cup_{f^{-1}}X_1,(V_1\cup_{\tilde{f}}V_2)^*\otimes(V_1\cup_{\tilde{f}}V_2)^*),$$
which acts by post-composing with the fixed map $\Psi_{\cup,*}$.

The first of these maps is smooth by definition of the product diffeology; and so is the second, by Theorem \ref{f-g-compatible:gluing:smooth:thm}. That the third map is smooth, can easily be deduced from the 
definition of a functional diffeology and the fact that it is a composition with a fixed map, which can be seen as the image of a constant plot.\footnote{We are making an implicit reference to the fact that, if $X,Y,Z$
are diffeological spaces, and each of $C^{\infty}(X,Y)$, $C^{\infty}(Y,Z)$, $C^{\infty}(X,Z)$ is considered with the functional diffeology, then the composition map 
$C^{\infty}(X,Y)\times C^{\infty}(Y,Z)\to C^{\infty}(X,Z)$ is smooth (for the product diffeology on its domain of definition), see \cite{iglesiasBook}, Section 1.59.}
\end{proof}

\subsection{Smoothness without the commutativity assumption}

In this absence of the gluing-dual commutativity, the description of the pseudo-metric $\tilde{g}=\mathcal{P}(g_1,g_2)$ is pointwise, so we give a direct proof.

\begin{thm}
For any two $(f,\tilde{f})$-compatible pseudo-metrics $g_1$ and $g_2$ on the pseudo-bundles $\pi_1:V_1\to X_1$ and $\pi_2:V_2\to X_2$ consider the pseudo-metric $\tilde{g}$ on the pseudo-bundle
$\pi_1\cup_{(\tilde{f},f)}\pi_2:V_1\cup_{\tilde{f}}V_2\to X_1\cup_f X_2$ defined, for every $x\in X_1\cup_f X_2$ and $v,w\in(\pi_1\cup_{(\tilde{f},f)}\pi_2)^{-1}(x)$ as
$$\tilde{g}(x)(v_1,v_2)=\left\{\begin{array}{ll} g_1(i_1^{-1}(x))(j_1^{-1}(v),j_1^{-1}(w)) & \mbox{if }x\in\mbox{Im}(i_1),\\
g_2(i_2^{-1}(x))(j_2^{-1}(v),j_2^{-1}(w)) & \mbox{if }x\in\mbox{Im}(i_2). \end{array}\right.$$ Then the map
$$\mathcal{P}:\mathcal{G}(V_1,X_1)\times_{comp}\mathcal{G}(V_2,X_2)\to\mathcal{G}(V_1\cup_{\tilde{f}}V_2,X_1\cup_f X_2)$$ defined by $\mathcal{P}(g_1,g_2)=\tilde{g}$ is smooth.
\end{thm}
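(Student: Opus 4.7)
The plan is to unfold the definition of a plot of each functional diffeology involved, reducing everything to the smoothness of evaluations of $g_1$ and $g_2$, and then to perform the same local case analysis that appeared in the proof of Theorem \ref{glued:pseudo:metric:noncommutative:thm}, but now with an additional parameter coming from the plot in $\mathcal{G}(V_1,X_1)\times_{comp}\mathcal{G}(V_2,X_2)$.

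Concretely, let $P:U\to\mathcal{G}(V_1,X_1)\times_{comp}\mathcal{G}(V_2,X_2)$ be a plot; by definition of the product and subset diffeologies, $P=(p_1,p_2)$ with $p_i:U\to\mathcal{G}(V_i,X_i)$ a plot for $i=1,2$, and with $p_1(u)$ and $p_2(u)$ an $(f,\tilde{f})$-compatible pair for every $u\in U$. To show that $\mathcal{P}\circ P$ is a plot of $\mathcal{G}(V_1\cup_{\tilde{f}}V_2,X_1\cup_f X_2)$, I need to show, by the defining property of the functional diffeology together with the definition of the tensor product diffeology on $(V_1\cup_{\tilde{f}}V_2)^*\otimes(V_1\cup_{\tilde{f}}V_2)^*$, that for every plot $q:U'\to X_1\cup_f X_2$ and every pair of plots $r,s:U''\to V_1\cup_{\tilde{f}}V_2$ with $(\pi_1\cup_{(\tilde{f},f)}\pi_2)(r(u''))=(\pi_1\cup_{(\tilde{f},f)}\pi_2)(s(u''))=q(u')$, the map
$$(u,u',u'')\mapsto\mathcal{P}(p_1(u),p_2(u))(q(u'))(r(u''),s(u''))$$
is smooth on the relevant subset of $U\times U'\times U''$ endowed with its subset diffeology.

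I would then apply Lemma \ref{describe:plots:gluing:diffeol:lem} to $q$, and its obvious pseudo-bundle analogue to $r,s$, so that after shrinking $U',U''$ there are two cases: either $q$ lifts to a plot $q_2$ of $X_2$ and, correspondingly, $r,s$ lift to plots $r_2,s_2$ of $V_2$ (in which case the fibrewise compatibility of $q,r,s$ forces $\pi_2(r_2(u''))=\pi_2(s_2(u''))=q_2(u')$); or $q$ lifts to a plot $q_1$ of $X_1$ and $r,s$ lift to plots $r_1,s_1$ of $V_1$, with $q(u')$ described by $i_1\circ q_1$ or $i_2\circ f\circ q_1$ according as $q_1(u')\notin Y$ or $q_1(u')\in Y$, and similarly for $r,s$. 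In the first case the defining formula for $\tilde{g}$ over $\mathrm{Im}(i_2)$ gives
$$\mathcal{P}(p_1(u),p_2(u))(q(u'))(r(u''),s(u''))=p_2(u)(q_2(u'))(r_2(u''),s_2(u'')),$$
which is smooth because $p_2$ is a plot of $\mathcal{G}(V_2,X_2)$. In the second case, on the open piece where $q_1(u')\in X_1\setminus Y$ the expression reduces to $p_1(u)(q_1(u'))(r_1(u''),s_1(u''))$, while on the open piece where $q_1(u')\in Y$ the formula for $\tilde{g}$ over $\mathrm{Im}(i_2)$ combined with the fact that $r_1,s_1$ are pushed to $\tilde{f}\circ r_1,\tilde{f}\circ s_1$ gives $p_2(u)(f(q_1(u')))(\tilde{f}(r_1(u'')),\tilde{f}(s_1(u'')))$, which by the pointwise $(f,\tilde{f})$-compatibility of $p_1(u)$ with $p_2(u)$ for every $u$ equals $p_1(u)(q_1(u'))(r_1(u''),s_1(u''))$; hence the two local expressions agree and provide a single smooth function on the full domain, namely the evaluation of $p_1$ at the plots $q_1,r_1,s_1$.

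The main obstacle I expect is a bookkeeping one: making sure that the local lifts of $q$, of $r$ and of $s$ can be chosen consistently so that the three case distinctions interact coherently, and then verifying that the compatibility $p_1(u)(\,\cdot\,)=p_2(u)(\tilde{f}(\cdot),\tilde{f}(\cdot))\circ f$, which holds for every fixed $u$, propagates to a genuine equality of smooth functions in the full triple of variables $(u,u',u'')$. Once this is done, smoothness of $\mathcal{P}\circ P$ follows since smoothness is a local property, and the theorem is proved.
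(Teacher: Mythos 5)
Your plan is correct and follows essentially the same route as the paper's own proof: unfolding the functional, product, and tensor-product diffeologies down to the triple evaluation map, splitting into the two local lifting cases for the base plot, and using the pointwise $(f,\tilde{f})$-compatibility of $p_1(u)$ and $p_2(u)$ to merge the two local expressions over $X_1\setminus Y$ and over $Y$ into a single evaluation of $p_1$ on plots of $X_1$ and $V_1$. The bookkeeping issue you flag (consistency of the lifts of $q$, $r$, $s$) is resolved exactly as you anticipate, via the fibrewise constraint $(\pi_1\cup_{(\tilde{f},f)}\pi_2)\circ r=(\pi_1\cup_{(\tilde{f},f)}\pi_2)\circ s=q$ forcing the lifts to live in the matching factor.
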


\begin{proof}
We must show that the composition of $\mathcal{P}$ with an arbitrary plot $p$ of $\mathcal{G}(V_1,X_1)\times_{comp}\mathcal{G}(V_2,X_2)$ is a plot of $\mathcal{G}(V_1\cup_{\tilde{f}}V_2,X_1\cup_f X_2)$. 
By definitions of the subset diffeology and the product one, $p$ writes as a pair of form $(p_1,p_2)$, where each $p_i:U\to\mathcal{G}(V_i,X_i)$ is a plot of $\mathcal{G}(V_i,X_i)$; the diffeology of the latter 
being the subset diffeology relative to the functional diffeology on $C^{\infty}(X_i,V_i^*\otimes V_i^*)$, our assumptions thus amount to precisely the following. First of all, the map 
$\varphi_i:U\times X_i\to V_i^*\otimes V_i^*$ given by $(u,x)\mapsto p_i(u)(x)$ must be smooth; this in turn means that for every plot $q_i:U_i'\to X_i$ the map defined by 
$U\times U_i'\ni(u,u')\mapsto p_i(u)(q_i(u'))\in V_i^*\otimes V_i^*$ must be a plot of $V_i^*\otimes V_i^*$. And this, finally, means that for any two plots $s_i,t_i:U_i''\to V_i$ the map
$$U\times U_i'\times U_i''\ni(u,u',u'')\mapsto p_i(u)(q_i(u'))(s_i(u''),t_i(u'')),$$ defined on the subset of triples $(u,u',u'')$ such that $\pi_i(s_i(u''))=\pi_i(t_i(u''))=q_i(u')$, must be a $\matR$-valued smooth map 
(for the subset diffeology of the appropriate triples).

On the other hand, we need to show that $\mathcal{P}\circ p$ is a plot of $\mathcal{G}(V_1\cup_{\tilde{f}}V_2,X_1\cup_f X_2)$. This again amounts to taking an arbitrary plot $q:U'\to X_1\cup_f X_2$ of
$X_1\cup_f X_2$ and showing that $U\times U'\ni(u,u')\mapsto(\mathcal{P}(p(u)))(q(u'))$ is a plot of $(V_1\cup_{\tilde{f}}V_2)^*\otimes(V_1\cup_{\tilde{f}}V_2)^*$, that is, for any two plots 
$s,t:U''\to V_1\cup_{\tilde{f}}V_2$ of $V_1\cup_{\tilde{f}}V_2$, the map
$$U\times U'\times U''\ni(u,u',u'')\mapsto(\mathcal{P}(p(u)))(q(u'))(s(u''),t(u'')),$$ once again defined precisely for those triples for which 
$(\pi_1\cup_{(\tilde{f},f)}\pi_2)(s(u''))=(\pi_1\cup_{(\tilde{f},f)}\pi_2)(t(u''))=q(u')$, is a $\matR$-valued map smooth for the subset diffeology on the set of such triples.

Recall, finally, that by definition of $\mathcal{P}$ we have
$$\mathcal{P}(p(u))(q(u'))=\left\{\begin{array}{ll} p_1(u)(i_1^{-1}(q(u')))\circ(j_1^{-1}\otimes j_1^{-1}) & \mbox{if }q(u')\in\mbox{Im}(i_1),\\
p_2(u)(i_2^{-1}(q(u')))\circ(j_2^{-1}\otimes j_2^{-1}) & \mbox{if }q(u')\in\mbox{Im}(i_2).\end{array}\right.$$

Now, every plot $q$ of $X_1\cup_f X_2$ locally lifts to either a plot $q_1$ of $X_1$, or to a plot $q_2$ of $X_2$. Suppose first that it lifts to a plot $q_2$; this means that $q=i_2\circ q_2$. Then we have:
$$(\mathcal{P}(p(u)))(q(u'))(s(u''),t(u''))=p_2(u)(q_2(u'))(j_2^{-1}(s(u'')),j_2^{-1}(t(u''))).$$ Recalling that $j_2^{-1}\circ s$ and $j_2^{-1}\circ t$ are plots of $V_2$ (by $j_2$ being an induction), we conclude that 
the evaluation $(u,u',u'')\mapsto(\mathcal{P}(p(u)))(q(u'))(s(u''),t(u''))$ is a smooth map from its domain of definition to $\matR$.

Suppose, finally, that $q$ lifts to a plot $q_1$ of $X_1$; this means that
$$q(u')=\left\{\begin{array}{ll} i_1(q_1(u')) & \mbox{if }q_1(u')\in X_1\setminus Y,\\
i_2(f(q_1(u'))) & \mbox{if }q_1(u')\in Y. \end{array}\right. $$ Accordingly, we obtain
$$\mathcal{P}(p(u))(q(u'))(s(u''),t(u''))=\left\{\begin{array}{ll} p_1(u)(q_1(u'))(j_1^{-1}(s(u'')),j_1^{-1}(t(u''))) & \mbox{if }q_1(u')\in X_1\setminus Y,\\
p_2(u)(f(q_1(u')))(j_2^{-1}(s(u'')),j_2^{-1}(t(u''))) & \mbox{if }q_1(u')\in Y.\end{array}\right.$$ Recall now that $s:U''\to V_1\cup_{\tilde{f}}V_2$ (we consider only one of $s,t$, the reasoning is completely the same 
for the other) is a plot of $V_1\cup_{\tilde{f}}V_2$, so (as usual, we assume that $U''$ is small enough) it lifts either to a plot of $V_1$ or that of $V_2$. Now, since $(\pi_1\cup_{(\tilde{f},f)}\pi_2)(s(u''))=q(u')$,
and we assumed that $q$ lifts to a plot of $X_1$, we see that $s$ lifts to a plot $s_1$ of $V_1$, and furthermore, whenever $q_1(u')\in Y$, we have that $j_2^{-1}(s(u''))=\tilde{f}(s_1(u''))$ (while we have 
$j_1^{-1}(s(u''))=s_1(u'')$ elsewhere). For the same reason, there is a plot $t_1$ of $V_1$ such that $q_1(u')\in Y\Rightarrow j_2^{-1}(t(u''))=\tilde{f}(t_1(u''))$ and $j_1^{-1}(t(u''))=t_1(u'')$ in all other cases.

It remains to observe that, by compatibility of the pseudo-metrics $p_1(u)$ and $p_2(u)$, we have
$$p_2(u)(f(q_1(u')))(j_2^{-1}(s(u'')),j_2^{-1}(t(u'')))=p_2(u)(f(q_1(u')))(\tilde{f}(s_1(u'')),\tilde{f}(t_1(u'')))=$$
$$=p_1(u)(q_1(u'))(s_1(u''),t_1(u'')),$$ which is the same expression that defines $\mathcal{P}(p(u))(q(u'))(s(u''),t(u''))$ for $q_1(u')\in X_1\setminus Y$, and we get the final conclusion from the assumptions 
on $p(u)$.
\end{proof}

We thus can see directly that there is a clear relation between the space of pairs of compatible pseudo-metrics on the factors of gluing, and the space of pseudo-metrics on the result of gluing. Furthermore, it 
is quite clear that the map $\mathcal{P}$ is a diffeomorphism if $\tilde{f}$ is so; in fact, it probably suffices for $\tilde{f}$ to be an induction (checking this thoroughly is left for future work). More in general, 
$\mathcal{P}$ is always injective (this follows simply from the fact that the restriction of $\mathcal{P}(g_1,g_2)$ on each fibres coincides with either $g_1$ or $g_2$, and from the compatibility of these); on the 
other hand, it is not quite clear whether/when it would be surjective (once again, the issue is left for future work).

\section{The induced metric on the dual pseudo-bundle}

Similarly to the standard case, if a given finite-dimensional diffeological vector pseudo-bundle $\pi:V\to X$ admits a pseudo-metric $g$, then there is a natural induced pseudo-metric on its dual pseudo-bundle 
$\pi^*:V^*\to X$.

\paragraph{The natural pairing map induced by a pseudo-metric} Let us define the following pseudo-bundle map $\Phi$ from $V$ to $V^*$. We set, for every $v\in V$, that
$$\Phi(v)=g(\pi(v))(v,\cdot)\in V^*.$$ The restriction of this map to each fibre is the natural pairing via $g$; as a map between diffeological vector spaces, it was already considered in \cite{pseudometric}, where 
it was shown that it is surjective and establishes a diffeomorphism between the dual and a  specific subspace of the domain space.

\begin{lemma}\label{Phi:smooth:linear:maps:bundle:lem}
Let $\pi:V\to X$ be a finite-dimensional diffeological vector pseudo-bundle that admits a pseudo-metric, and let $g:X\to V^*\otimes V^*$ be a pseudo-metric on $V$. Then $\Phi$ is smooth as a map $V\to V^*$.
\end{lemma}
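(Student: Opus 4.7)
My plan is to use directly the characterization of plots of the dual pseudo-bundle, given in the excerpt just after the definition of $V^*$. Concretely, let $r:U\to V$ be an arbitrary plot of $V$; I need to show that $\Phi\circ r$ is a plot of $V^*$. By the cited characterization, this amounts to checking that for every plot $q:U'\to V$ of $V$ the map
\[
(u,u')\mapsto (\Phi\circ r)(u)(q(u')) = g(\pi(r(u)))\bigl(r(u),\,q(u')\bigr),
\]
defined on the set $Z=\{(u,u')\in U\times U'\,|\,\pi(r(u))=\pi(q(u'))\}$ equipped with its subset diffeology, is smooth as a map into $\matR$. Note that the defining condition $\pi^*(\Phi(r(u)))=\pi(q(u'))$ reduces to $\pi(r(u))=\pi(q(u'))$ since, by construction, $\pi^*(\Phi(v))=\pi(v)$ for every $v\in V$; the latter fact itself deserves a one-line check and in particular shows that $\pi^*\circ\Phi=\pi$, so $\Phi$ is a fibrewise map.

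The key step is then to interpret the expression $g(\pi(r(u)))(r(u),q(u'))$ as the evaluation of a smooth section of $(V\otimes V)^*$ on a pair of plots of $V$. The pseudo-metric $g$ is by definition a smooth section of $V^*\otimes V^*$, which by Theorem 2.3.5 of \cite{vincent} (combined with the identification $V^*\otimes V^*\cong(V\otimes V)^*$ mentioned in the excerpt) corresponds to a smooth fibrewise symmetric bilinear form. Thus, using the very characterization of plots of the dual of $V\otimes V$ (applied to the plot $r\otimes q$ of $V\otimes V$ defined on $Z$), the function $(u,u')\mapsto g(\pi(r(u)))(r(u),q(u'))$ is smooth.

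The main obstacle, therefore, is not really an obstacle but a bookkeeping step: to write down cleanly that $(u,u')\mapsto r(u)\otimes q(u')$ is a plot of the tensor product pseudo-bundle $V\otimes V$ over the fibre-product domain $Z$, and that evaluating a smooth section of $(V\otimes V)^*$ on such a plot yields a smooth $\matR$-valued map. This is direct from the definition of the tensor product diffeology on pseudo-bundles (as a pushforward through the universal map, applied fibrewise) and from the plot-characterization of the dual pseudo-bundle recalled in the excerpt. Once this is in place, smoothness of $\Phi\circ r$ follows for every plot $r$ of $V$, and hence $\Phi$ is smooth.
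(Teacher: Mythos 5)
Your proof is correct and follows essentially the same route as the paper: fix a plot $r$ of $V$, invoke the plot-characterization of the dual pseudo-bundle, and reduce everything to the smoothness of the evaluation $(u,u')\mapsto g(\pi(r(u)))(r(u),q(u'))$, which the paper dispatches with the single phrase ``this follows from smoothness of $g$''. Your version merely makes explicit the bookkeeping the paper leaves implicit (that $(u,u')\mapsto r(u)\otimes q(u')$ is a plot of $V\otimes V$ over the fibre-product domain and that $g$, as a smooth section of $(V\otimes V)^*$, evaluates smoothly on it), which is a harmless and indeed welcome elaboration.
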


\begin{proof}
In order to show that $\Phi$ is smooth, we need to show that, for any arbitrary plot $p:U\to V$ of $V$ its composition $\Phi\circ p$ is a plot of $V^*$. Since we have $(\Phi\circ p)(u)=g(\pi(p(u)))(p(u),\cdot)$, and 
by definition of the diffeology on $V^*$, it suffices to prove that its evaluation on any other plot $q:U'\to V$ is smooth (as a map from the set of $(u,u')$ such that $\pi(p(u))=\pi(q(u'))$), and this follows from smoothness of $g$.
\end{proof}

\begin{rem}
Notice that the same conclusion holds for any smooth bilinear map $g:X\to V^*\otimes V^*$, not necessarily a pseudo-metric: whether it has, or not, the maximal rank possible for a given fibre, is inessential 
for smoothness of $\Phi$.
\end{rem}

\paragraph{The pseudo-metric on $V^*$} We now make use of the above lemma to obtain a pseudo-metric on the dual pseudo-bundle of a given one, provided it is locally trivial.

\begin{thm}\label{metric:on:dual:simult:thm}
Let $\pi:V\to X$ be a finite-dimensional diffeological vector pseudo-bundle that is locally trivial and admits a pseudo-metric $g$. Then there is a natural induced pseudo-metric $g^*$ on its dual pseudo-bundle 
$\pi^*:V^*\to X$.
\end{thm}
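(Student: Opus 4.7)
The plan is to define $g^*$ pointwise using the fibrewise pairing map $\Phi$ from Lemma~\ref{Phi:smooth:linear:maps:bundle:lem}, and to establish its smoothness using the local triviality of $\pi$. Fibrewise, $\Phi|_{V_x} : V_x \to V_x^*$ is a surjective linear map whose kernel is precisely the null space $\{v \in V_x : g(x)(v,\cdot) \equiv 0\}$ of the form $g(x)$, since $\Phi(v) = g(\pi(v))(v,\cdot)$ by definition. Consequently $g(x)$ descends to a non-degenerate symmetric bilinear form on $V_x / \ker \Phi|_{V_x}$, which $\Phi|_{V_x}$ identifies with $V_x^*$; I set
$$g^*(x)(\phi_1, \phi_2) := g(x)(v_1, v_2) \quad \mbox{for any } v_i \in V_x \mbox{ with } \Phi(v_i) = \phi_i.$$
Independence of the choice of preimages, symmetry, and positive semi-definiteness follow immediately from the corresponding properties of $g(x)$. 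To verify the required rank, I use that the diffeological dual of a finite-dimensional diffeological vector space is always a fine vector space (see \cite{wu}); consequently $(V_x^*)^*$ is fine of dimension $\dim V_x^*$, and $g^*(x)$, which is non-degenerate on $V_x^*$ by construction, has rank $\dim V_x^* = \dim((V_x^*)^*)$, exactly as required by the definition of a pseudo-metric.

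For smoothness of $g^*$ I work locally. Around $x_0 \in X$, fix a neighbourhood $U$ and a pseudo-bundle trivialization $\pi^{-1}(U) \cong U \times F$; this induces the trivialization $(\pi^*)^{-1}(U) \cong U \times F^*$. Under these identifications $g|_U$ becomes a smooth map $U \to F^* \otimes F^*$ whose pointwise values are symmetric bilinear forms on $F$ of constant rank $\dim F^*$, and $\Phi|_U$ becomes a smooth family of surjections $F \to F^*$ with constant-rank kernels. By the characterization of plots of the dual pseudo-bundle from \cite{vincent} and the definition of the tensor product diffeology, it suffices to show that for any plot $p : U' \to X$ landing in $U$ and any pair of plots $\alpha, \beta$ of $V^*$ lying over $p$, the evaluation $(u, u') \mapsto g^*(p(u))(\alpha(u'), \beta(u'))$ is smooth on its domain of definition. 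The constant-rank property lets me produce a smooth right-inverse $\sigma : U \to \mathrm{Hom}(F^*, F)$ of $\Phi$ on a possibly smaller neighbourhood of $x_0$, by selecting a $\dim F^*$-by-$\dim F^*$ minor of the matrix of $g$ that is invertible at $x_0$ and inverting it smoothly in $x$; composing $\alpha, \beta$ with $\sigma$ yields plots $v, w$ of $V$ that lift $\alpha, \beta$ through $\Phi$, and the evaluation then reduces to $(u, u') \mapsto g(p(u))(v(u'), w(u'))$, which is smooth by smoothness of $g$.

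The main obstacle is precisely this last smoothness step, namely the construction of smooth local lifts of plots of $V^*$ through $\Phi$. Local triviality is essential here, as it is exactly what guarantees the constant-rank property of the family $\Phi_x$; without it, the kernels could vary in dimension and no smooth right-inverse of $\Phi$ would exist. This parallels the obstruction observed in Example~\ref{no:pseudometric:exists:ex}, where the failure of local triviality forbids the existence of any pseudo-metric on $V$ whatsoever. Once smoothness is handled, the remaining verifications (symmetry, positive semi-definiteness, rank) reduce to the pointwise linear-algebraic argument laid out in the first paragraph.
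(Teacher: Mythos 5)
Your proposal is correct and follows essentially the same route as the paper: $g^*$ is defined by $g^*(\Phi(v),\Phi(w))=g(x)(v,w)$, well-definedness comes from the kernel of $\Phi|_{\pi^{-1}(x)}$ being the isotropic subspace of $g(x)$, and smoothness is obtained from local triviality by inverting $\Phi$ smoothly over a neighbourhood on a complement of its kernel. Your write-up is in fact somewhat more explicit than the paper's, both in checking the rank condition (via the fineness of the diffeological dual) and in producing the smooth local right-inverse of $\Phi$ through an invertible minor of the Gram matrix, where the paper simply restricts $\Phi$ to a sub-bundle $X'\times V_0$ with $V_0$ taken from \cite{pseudometric}.
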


\begin{proof}
Let us define $g^*$ pointwise; recall (\cite{pseudometric}) that for each $x\in X$ the restriction of the map $\Phi$ to the fibre over $x$ is surjective onto the corresponding fibre of $V^*$. We define
$$g^*(x)(\Phi(v),\Phi(w)):=g(x)(v,w)\mbox{ for all }x\in X\mbox{ and }v,w\in\pi^{-1}(x).$$ By the same result, this is well-defined, since whenever $\Phi(x)(v)=\Phi(x)(v')$, the elements $v,v'\in\pi^{-1}(x)$ differ
by an element of the isotropic subspace of $g(x)$. It remains to see that $g^*$ is smooth as a map $X\to(V^*)^*\otimes(V^*)^*$.

Let $x\in X$ be an arbitrary point, and let $X'\ni x$ be a neighborhood of it such that $\pi^{-1}(X')\cong X'\times\pi^{-1}(x)$. Recall (see \cite{pseudometric}) that $\pi^{-1}(x)$, being a finite-dimensional 
diffeological vector space endowed with a pseudo-metric $g(x)$, contains a subspace $V_0$ such that the restriction of $\Phi$ to this subspace is a diffeomorphism $V_0\to(\pi^{-1}(x))^*$. Then the assumption 
of the local triviality this restriction yields a diffeomorphism $\pi^{-1}(X')\supset X'\times V_0\to(\pi^*)^{-1}(X')$, which shows that $g^*=\mbox{\textsc{ev}}(g)(\Phi^{-1}\otimes\Phi^{-1})$ over $X'$, is indeed 
smooth.
\end{proof}

\begin{rem}
We are not able to say whether the assumption of local triviality is truly necessary for the existence of a smooth $g^*$, which is somewhat disappointing. 
\end{rem}

\vspace{1cm}

\noindent University of Pisa \\
Department of Mathematics \\
Via F. Buonarroti 1C\\
56127 PISA -- Italy\\
\ \\
ekaterina.pervova@unipi.it\\

\end{document}